\newtheorem{thm}{Theorem}[section]
\newcommand*{\c@thmprop}{\c@thm}
\newcommand*{\p@thmprop}{\p@thm}
\newtheorem{prop}[thmprop]{Proposition} 
\newcommand*{\c@thmrem}{\c@thm}
\newcommand*{\p@thmrem}{\p@thm}
\newtheorem{rem}[thmrem]{Remark}
\newcommand*{\c@thmdefi}{\c@thm}
\newcommand*{\p@thmdefi}{\p@thm}
\newtheorem{defi}[thmdefi]{Definition}
\newcommand*{\c@thmlem}{\c@thm}
\newcommand*{\p@thmlem}{\p@thm}
\newtheorem{lem}[thmlem]{Lemma}
\newtheorem*{claim}{Claim} 
\newcommand*{\c@thmnote}{\c@thm}
\newcommand*{\p@thmnote}{\p@thm}
\newtheorem{note}[thmnote]{Notation}
\newcommand*{\c@thmdefiprop}{\c@thm}
\newcommand*{\p@thmdefiprop}{\p@thm}
\newtheorem{defiprop}[thmdefiprop]{Definition and Proposition}
\newcommand*{\c@thmex}{\c@thm}
\newcommand*{\p@thmex}{\p@thm}
\newtheorem{example}[thmex]{Example}
\newtheorem*{defin*}{Definition} 
\newtheorem*{propo*}{Proposition} 
\newtheorem*{defipropo*}{Definition and Proposition} 
\newtheorem*{them*}{Theorem}
\newcommand{\sys}[1]{#1^{\mathbf{s}}}
\newcommand{\dcauthorsurname}{Krempasky (geb. Nakiboglu)} 
\newcommand{\dcauthorname}{Denise}
\newcommand{\dctitle}{Symmetric Squaring Construction in Bordism} 
\newcommand{\dckeydea}{Schlagwort 1}
\newcommand{\dckeydeb}{Schlagwort 2}
\newcommand{\dckeydec}{Schlagwort 3}
\newcommand{\dckeyded}{Schlagwort 4}
\newcommand{\dckeyena}{algebraic topology}
\newcommand{\dckeyenb}{homology theory}
\newcommand{\dckeyenc}{bordism}
\newcommand{\dckeyend}{\v{C}ech homology}
\newcommand{\dckeywordsen}{\vfill \raggedright {\textbf{Keywords:}}\\ \dckeyena, \dckeyenb, \dckeyenc, \dckeyend \\}
\newcommand{\dcpdfsubject}{Dissertation}							
\begin{document}

\pagenumbering{roman}


\thispagestyle{empty}
\begin{center}
\vspace*{2.0cm}
\textsl{\Huge  Symmetric Squaring ~\\
\vspace{1ex}
        in Homology and Bordism~\\ 
        }
\vspace*{2.0cm}
{\large Dissertation ~\\[2ex]
        zur Erlangung des mathematisch-naturwissenschaftlichen Doktorgrades ~\\[1ex]
        {\bf ``Doctor rerum naturalium''} ~\\[1ex]
        der Georg-August-Universit\"at G\"ottingen \par}
~\vspace*{2.0cm}
{\large vorgelegt von ~\\[1ex]
        {\bf Denise Krempasky (geb. Nakibo\u{g}lu)} ~\\
        aus Coburg \par}
\vspace*{3.0cm}
{\large G\"ottingen 2011 \par}
\end{center}
\newpage~
\thispagestyle{empty}
\vspace*{10cm}
\vfill
~\\
{\bf Referenten der Dissertation:}~\\
Referent: Prof.\ Dr.\ Thomas Schick~\\
Korreferent: Prof.\ Dr.\ Ralf Meyer~\\
~\\
{\bf Tag der m\"undlichen Pr\"ufung:}~\\




\begin{abstract}
\setcounter{page}{5} 
Looking at the cartesian product $X\times X$ of a topological space $X$ with itself, a natural map to be considered on that object is the involution that interchanges the coordinates, i.e. that maps $(x,y)$ to $(y,x)$.
The so-called 'symmetric squaring construction'  in \v{C}ech homology with $Z/2$-coefficients was introduced by Schick et al. 2007 as a map from the $k$-th \v{C}ech homology group of a space $X$ to the $2k$-th \v{C}ech homology group of $X \times X$ divided by the above mentioned involution. It turns out to be a crucial construction in the proof of a parametrised Borsuk-Ulam Theorem.\\
The symmetric squaring construction can be generalized to give a map in bordism, which will be the main topic of this thesis. More precisely, it will be shown that there is a well-defined, natural map from the $k$-th singular bordism group of $X$ to the $2k$-th bordism group of $X\times X$ divided by the involution as above. Moreover, this squaring really is a generalisation of the \v{C}ech homology case since it is compatible with the passage from bordism to homology via the fundamental class homomorphism. On the way to this result, the concept of \v{C}ech bordism is first defined as a combination of bordism and \v{C}ech homology and then compared to \v{C}ech homology.
\dckeywordsen				
\end{abstract}

%
%

\newpage~
\thispagestyle{empty}





\chapter*{Acknowledgement}
\thispagestyle{empty}
Many people influenced me and my work in a way such that this thesis had a chance to come into being. I am grateful to my supervisor Prof. Dr. Thomas Schick, who encouraged me and who always had the appropriate questions and answers needed to go on. Furthermore, I want to thank Prof. Dr. Ralf Meyer for taking the Koreferat.\\ 
For financial support during the writing of this thesis I thank the DFG Research Training Group 535 'Gruppen und Geometrie'.\\
Especially in the last few months it was of great importance to organize my work as flexible as I could. Thank you for making that possible: Prof. Dr. Stefan Halverscheid, Prof. Dr. Anita Schöbel, Prof. Dr. Ina Kersten and Dr. Hartje Kriete.\\
For very helpful math and non-math discussions I appreciate my colleagues Nils Waterstraat, Manuel Köhler, Ulrich Pennig and Alessandro Fermi.\\
For the making of pictures of cobordism, I used the tutorial and one of the templates provided by Aaron Lauda at \url{http://www.math.columbia.edu/~lauda/xy/cobordismtutorial/index.html}. I hereby want to thank him for making these available.\\

Last but not least, I am grateful to my whole family for being there, my Mum for making me want to understand the world and my brother for the unique connection we share.\\
I thank Thorsten for finding typos and more and Thorsten and Daria for their love, patience and support and for bringing me down to earth whenever I need it. Ihr seid der Jackpot meines Lebens.

\newpage~
\thispagestyle{empty}

\pagenumbering{roman}
\setcounter{page}{6}
\tableofcontents
\thispagestyle{plain}



\pagestyle{fancy}					
\renewcommand{\chaptermark}[1]{\markboth{\MakeUppercase{\thechapter.\ #1}}{\MakeUppercase{\thechapter.\ #1}}}
\setlength{\headheight}{15.17pt}
\fancyfoot{}
\fancyhead[LE,RO]{\thepage}
\fancyhead[LO]{\bfseries  \leftmark}
\fancyhead[RE]{\bfseries  \rightmark}
\clearpage
\setcounter{chapter}{-1}
\pagenumbering{arabic}
\setcounter{page}{9}
\chapter{Introduction}

There are many possibilities of producing new topological spaces from a given topological space $X$. Two of these form the basis of the construction that is the main objective of this thesis. Namely, these are building the cartesian product of $X$ with itself on the one hand and dividing by some relation on the space on the other hand. The symmetric squaring construction examined throughout this thesis is a composition of both of these possibilities. It consists of first squaring the space and then dividing the result by a symmetric relation. To be precise, in the cartesian square $X\times X$, the elements $(x,y)$ and $(y,x)$ are identified for all $x$ and $y$ in $X$ to give the symmetric square of $X$ which shall be denoted by $\sys{X}$ from now on.\\
In \cite{borsuk} the symmetric squaring construction was introduced and used in the context of \v{C}ech homology with $\mathbb Z_2$-coefficients. More precisely, there is constructed a well-defined map\[
\sys{(\hspace{0,5em}\cdot\hspace{0,5em})}\colon \check{H}_{k}(X;\mathbb Z_2)\rightarrow \check{H}_{2k}\left(\sys{X}, pr(\Delta);\mathbb Z_2 \right).\footnote{Here $\Delta$ denotes the diagonal in $X\times X$ and $pr\colon X\times X\rightarrow \sys{X}$ is the canonical projection.}
\] This map is needed for a proof of a generalised Borsuk-Ulam Theorem where it is crucial to be able to construct a specific non-trivial homology class. Symmetric squaring is a valuable tool for this purpose because it behaves very well with respect to fundamental classes of manifolds, which are non-trivial homology classes by definition. Namely, it has the property of mapping the fundamental class of a manifold to the \v{C}ech homology version of the fundamental class of the symmetric square of that manifold.\\ However, symmetric squaring as such is considered to be a construction of independent interest which is worth to be examined in other contexts. Our purpose in this thesis is to provide a generalisation of symmetric squaring to the context of bordism.\\
Homology and bordism have always been related. As pointed out in \cite{MR2058291}, one of the reasons Ren\'e Thom introduced the notion of bordism in \cite{MR0061823} in the first place was that he wanted to answer a question raised by Steenrod in \cite{MR0030189} related to homology classes.\footnote{The question is posed as problem 25 in \cite{MR0030189}: Given a homology class in some finite simplicial complex. Does there always exist a manifold and a map of that manifold into the given complex, such that the image of the fundamental class by that map in homology is the given homology class?} This question also led to the definition of a canonical map from bordism to homology called fundamental class homomorphism, which we recall in detail \autoref{fundhom} in \autoref{sec:compat}:
\begin{align*}
 \mu\colon\Omega_k(X,A)&\rightarrow H_k(X,A,\mathbb Z_2) \text{ is defined via }\\
\left[M,\partial M; f\right]&\mapsto \mu(M,\partial M,f):=(f)_\ast(\sigma_{\mathbf f}),
\end{align*}
where $\sigma_{\mathbf f}\in H_k(M,\partial M,\mathbb Z_2)$ is the fundamental class of $(M,\partial M)$.\\
Furthermore, what makes symmetric squaring useful in connection with homology is its behaviour with respect to manifolds and their fundamental homology classes, as pointed out above. Since the main objects bordism deals with are compact manifolds, we now already have encountered two indications why it is interesting to try to relate symmetric squaring to bordism.\\ 
On our way to a generalisation of symmetric squaring to bordism, we encounter some difficulties that lead to the definition of an alternative bordism functor in \autoref{cechbord} that combines bordism with \v{C}ech homology. It is called \v{C}ech bordism due to its analogy to \v{C}ech homology and its definition is given in \autoref{cechbordism}.
\begin{defipropo*}
Let $n\in \mathbb N$ be a natural number. For every topological pair of spaces $(X,A)$, we define the limit groups $\check{\mathcal{N}}_n(X,A)$ as 
\begin{align*}
\check{\mathcal{N}}_n(X,A)&:=\varprojlim\left\{\mathcal{N}_{n}(X,U)\,|\,  A \subset U \text{ is an open neighbourhood of } A \text{ in } X \right\},
\end{align*}where $\mathcal{N}_\ast$ denotes the unoriented singular bordism functor.\footnote{Defined for example in \cite{MR548463}.}
This gives a \v{C}ech-bordism-functor from the category of topological pairs of spaces to the category of groups.
\end{defipropo*}
Exactly as in the homology case, \v{C}ech bordism groups are isomorphic to singular bordism groups in many cases. In \autoref{prop:singcech} in \autoref{cechbord} we prove the following.\footnote{Note \autoref{rem:readability} about the usage of oriented and unoriented bordism in \autoref{cechbord}.}
\begin{propo*}
 Let $(X,A)$ be such that $X$ is an ENR\footnote{Cf. \autoref{def:ENR} on page \pageref{def:ENR}.} and $A\subset X$ is an ENR as well. Then \[\check{\mathcal{N}}(X,A)\simeq {\mathcal{N}}(X,A).\]
\end{propo*}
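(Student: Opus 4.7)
The plan is to show that the natural map $\mathcal{N}_n(X,A) \to \check{\mathcal{N}}_n(X,A)$ is an isomorphism by first verifying that, for each open neighborhood $U$ of $A$ in a suitable cofinal subsystem, the canonical map $\mathcal{N}_n(X,A) \to \mathcal{N}_n(X,U)$ is already an isomorphism, and then passing to the inverse limit.

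The first step I would carry out is a purely topological input: under the ENR assumption on the pair $(X,A)$, the subspace $A$ is a neighborhood deformation retract of $X$ in a strong sense, namely there exists a cofinal family of open neighborhoods $U$ of $A$ in $X$ such that $U$ deformation retracts onto $A$ (and any given open neighborhood of $A$ contains such a $U$). This is a classical consequence of Borsuk-type results for ENRs: an ENR pair embeds as a retract of an open pair in some Euclidean space, where the statement is straightforward, and the retraction transports back to $X$.

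Having fixed such a $U$, homotopy invariance of the singular bordism functor $\mathcal{N}_\ast$ — which holds because $\mathcal{N}_\ast$ satisfies all Eilenberg-Steenrod axioms except dimension — implies that the inclusion $A \hookrightarrow U$ induces an isomorphism $\mathcal{N}_n(A) \cong \mathcal{N}_n(U)$, so from the long exact sequence of the pair $(U,A)$ we get $\mathcal{N}_n(U,A) = 0$ for all $n$. Plugging this into the long exact sequence of the triple $(X,U,A)$,
\begin{equation*}
\cdots \to \mathcal{N}_{n+1}(X,U) \to \mathcal{N}_n(U,A) \to \mathcal{N}_n(X,A) \to \mathcal{N}_n(X,U) \to \mathcal{N}_{n-1}(U,A) \to \cdots,
\end{equation*}
forces the middle arrow $\mathcal{N}_n(X,A) \to \mathcal{N}_n(X,U)$ to be an isomorphism. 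Since the nice neighborhoods $U$ are cofinal in the directed set defining $\check{\mathcal{N}}_n(X,A)$, and the isomorphisms $\mathcal{N}_n(X,A) \to \mathcal{N}_n(X,U)$ are compatible with the transition maps of the inverse system (by functoriality), a formal argument about inverse limits over cofinal subsystems with invertible transition maps yields $\check{\mathcal{N}}_n(X,A) \cong \mathcal{N}_n(X,A)$.

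The main obstacle I expect is the first, topological step: it is not enough to produce a single deformation-retracting neighborhood of $A$, one needs them to form a cofinal family, and one must also confirm the implicit hypotheses (e.g.\ closedness of $A$ in $X$) under which the classical ENR neighborhood-retraction theorems apply. Once that is pinned down, the remainder is essentially a diagram chase combined with the standard behaviour of inverse limits.
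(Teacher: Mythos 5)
Your approach is correct in outline but is genuinely different from the paper's, and it silently relies on a stronger classical input than the paper uses.

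The paper does not try to show that the comparison maps $\mathcal{N}_n(X,A)\to\mathcal{N}_n(X,U)$ are isomorphisms level by level. Instead it constructs an explicit inverse $\rho$ to $j\colon\mathcal{N}(X,A)\to\check{\mathcal{N}}(X,A)$ directly on the inverse limit, using its Lemma~\ref{ret} (a Dold-style statement): there are arbitrarily small open $U_0\supset A$ and a map $r_0\colon(X,U_0)\to(X,A)$ with $r_0\circ i_0\simeq \mathrm{id}_{(X,A)}$ as maps of pairs. Crucially, the homotopy in Lemma~\ref{ret} takes values in $X$ and need not stay inside $U_0$, so one only obtains that $(i_0)_*\colon\mathcal{N}_n(X,A)\to\mathcal{N}_n(X,U_0)$ is \emph{split injective}, not an isomorphism at each level. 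The surjectivity is recovered only after passing to the limit: the identity $j\rho=\mathrm{id}$ is proved by choosing a still smaller $V\subset U\cap U_0$ and exploiting the compatibility relations between the components of an inverse-limit element.

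Your route instead aims to prove the stronger statement that there is a cofinal family of neighborhoods $U$ with $A\hookrightarrow U$ a homotopy equivalence (you phrase it as deformation retraction), so that $\mathcal{N}_n(U,A)=0$ and the triple sequence forces $\mathcal{N}_n(X,A)\xrightarrow{\;\sim\;}\mathcal{N}_n(X,U)$, after which passing to the limit is formal. That is a cleaner argument, but the topological input is a genuinely stronger fact than Lemma~\ref{ret}: it is essentially Hanner's theorem that a closed ANR subspace of an ANR has arbitrarily small open neighborhoods of which it is a strong deformation retract. You correctly flag the closedness issue as the crux. The proposition as stated does not assume $A$ closed; however, ENRs are locally compact, hence locally closed in the metrizable ambient $X$, so $\bar{A}\smallsetminus A$ is closed and disjoint from $A$, and one may first shrink $U$ to an open set missing $\bar{A}\smallsetminus A$ (so that $A$ is closed in $U$) and then apply the closed-pair theorem inside the ENR $U$. (The paper's own Lemma~\ref{ret} also implicitly needs control of $\bar{A}$, asserting $\bar{A}\subset W$.) Once that reference is pinned down, your proof goes through. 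The trade-off: your version buys a level-wise isomorphism and a very short limit argument at the cost of invoking Hanner; the paper stays with the weaker Dold lemma and pays with a more delicate verification that $j$ and $\rho$ are mutually inverse on the inverse limit.
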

Having provided similar conditions to the homology case in the world of bordism, a generalisation of the symmetric squaring construction can be established in \autoref{halfun} and \autoref{unnat} in \autoref{uncase}, namely:
\begin{them*}
 Let $(X,A)$ be a pair of topological spaces. Then there exists a well-defined symmetric squaring map in unoriented bordism \[
\sys{(\hspace{0,5em}\cdot\hspace{0,5em})}\colon\mathcal{N}_n(X,A)\rightarrow\check{\mathcal{N}}_{2n}\left(\sys{(X,A)}\right).
\] 
This construction is natural.
\end{them*}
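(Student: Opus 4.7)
The plan is to produce the map by first constructing, for each representative $(M,\partial M,f)$ of a class in $\mathcal{N}_n(X,A)$, a compatible family of ordinary bordism classes in $\mathcal{N}_{2n}(\sys{X},V)$ indexed by a cofinal system of open neighbourhoods $V$ of the subspace appearing in the definition of $\check{\mathcal{N}}_{2n}(\sys{(X,A)})$, and then assembling them into the inverse limit. The geometric input is the product manifold $M\times M$ of dimension $2n$, together with the coordinate-swap involution $\tau$ and the equivariant map $f\times f\colon M\times M\to X\times X$. On passing to quotients, this descends to $\sys{f}\colon \sys{M}\to \sys{X}$, but $\sys{M}$ is singular along $pr(\Delta_M)$, so it does not directly yield an element of $\mathcal{N}_{2n}$; this forces a detour through \v{C}ech bordism.

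Given $V$, the first step is to choose a $\tau$-invariant open neighbourhood $U$ of $\Delta_M$ in $M\times M$ together with an invariant collar of $\partial(M\times M)=\partial M\times M\cup M\times \partial M$ whose projected image in $\sys{X}$ under $\sys{f}$ lies inside $V$. Such a $U$ exists since $(f\times f)(\Delta_M)$ lies in the diagonal of $X\times X$, which projects to $pr(\Delta_X)\subset V$, and the boundary pieces map into the $A$-type part of $\sys{(X,A)}$, which $V$ covers by assumption. The complement $K$ of these invariant open sets is a compact $\tau$-invariant submanifold of $M\times M$ without fixed points, so the quotient $\sys{K}$ is a genuine compact $2n$-manifold with boundary, and $\sys{f}|_{\sys{K}}$ sends $\partial \sys{K}$ into $V$. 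This furnishes the required class in $\mathcal{N}_{2n}(\sys{X},V)$. Compatibility under shrinking $V'\subset V$ is witnessed by enlarging $K$ to $K'$ and using the annular piece $K'\setminus K$ as an explicit bordism over $V$, so the family assembles into a class in $\check{\mathcal{N}}_{2n}(\sys{(X,A)})$.

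Well-definedness on bordism classes is proven by running the same construction one dimension higher: an $(n{+}1)$-bordism $W$ between $(M_0,f_0)$ and $(M_1,f_1)$ yields $W\times W$, and after removing invariant neighbourhoods of $\Delta_W$ and of the appropriate boundary pieces, quotienting by $\tau$ gives a $(2n{+}1)$-dimensional \v{C}ech bordism restricting on its boundary to the difference of the two squared classes. Independence of the specific choice of $U$ and of the collar is obtained from the fact that any two invariant choices are connected by an invariant isotopy, yielding a product-type bordism in each $\mathcal{N}_{2n}(\sys{X},V)$. Naturality with respect to a map $g\colon (X,A)\to (Y,B)$ is formal: the identity $\sys{g}\circ\sys{f}=\sys{(g\circ f)}$ shows that the construction intertwines composition with $g$ and the induced map $(\sys{g})_{*}$ on \v{C}ech bordism, and this pointwise commutativity passes to the inverse limit.

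The main obstacle is the singular locus of $\sys{M}$: one must cut out the diagonal in a controlled, $\tau$-equivariant way and show that the resulting family of smooth classes is genuinely independent of all choices made. The delicate point is coordinating, for a given $V$, the size of the diagonal tube with the size of the boundary collar so that both map into $V$ simultaneously, and then checking that varying these choices produces an invariant cobordism rather than merely a continuous homotopy. Once this is handled via equivariant tubular neighbourhood uniqueness on $M\times M$, the algebraic manipulations in the inverse limit and the verification of naturality are straightforward.
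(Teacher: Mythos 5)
Your construction of the map itself is close in spirit to the paper's: choose, for each open neighbourhood $V$ of $pr(X\times A\cup A\times X\cup\Delta)$, a $\tau$-invariant tube around $\Delta_M$ whose image lands in $V$, remove it, and take the quotient, then assemble over the inverse system. The paper packages the required properties of the removed tube as conditions \textbf{D1}--\textbf{D4} and proves existence via a $\tau$-invariant Riemannian metric on the double of $M\times M$ and Sard's theorem, but your description of the cofinal family and the compatibility under $V'\subset V$ is essentially the same idea and is fine modulo the corner-straightening technicalities.

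The real gap is in your well-definedness argument. You claim that starting from a bordism $W$ between $(M_0,f_0)$ and $(M_1,f_1)$, the space $\widetilde{W\times W}/\tau$ "gives a $(2n+1)$-dimensional \v{C}ech bordism restricting on its boundary to the difference of the two squared classes." This is dimensionally wrong: $W\times W$ is $(2n+2)$-dimensional, and removing open invariant neighbourhoods and quotienting by $\tau$ does not drop the dimension. Worse, $M_0\times M_0$ and $M_1\times M_1$ sit in $W\times W$ with codimension two, not as codimension-zero pieces of $\partial(W\times W)=\partial W\times W\cup W\times\partial W$, so they cannot appear as boundary components of $\widetilde{W\times W}/\tau$ at all. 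The "obvious" $(2n+1)$-dimensional bordism $(V\times M_0)\cup_{M_1\times M_0}(M_1\times V)$ has the right dimension but is not $\tau$-invariant, and the paper explicitly points out that it therefore cannot be divided by $\tau$. What is missing from your proposal is the paper's key device: take a Morse function $f\colon W\to[0,1]$ with $f^{-1}(i)=M_i$ (arranged via \autoref{meinmorse} so that $\bar f(x,y)=f(x)-f(y)$ has $0$ as a regular value away from $\Delta_W$), and form the fibred product $B=\bar f^{-1}(0)\subset W\times W$. This $B$ is a $\tau$-invariant codimension-one submanifold, hence $(2n+1)$-dimensional, with $M_0\times M_0$ and $M_1\times M_1$ genuinely appearing in its boundary; after removing a \textbf{D1}--\textbf{D4} tube and dividing by $\tau$ one obtains the required bordism. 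Without this (or an equivalent dimension-reducing, $\tau$-equivariant slicing), the well-definedness argument does not go through.

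Your naturality remark, on the other hand, is correct and matches the paper: $\sys{(g\circ f)}=\sys{g}\circ\sys{f}$ gives commutativity levelwise, which passes to the inverse limit.
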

That this really is a generalisation of the result in homology becomes clear when combining it with the above noted passage from bordism to homology. Symmetric squaring is compatible with this fundamental class homomorphism and the main reason for this is the good behaviour of symmetric squaring with respect to fundamental classes. In \autoref{sec:compat}, we prove that the diagram
\[
\begin{xy}\xymatrixcolsep{3.5pc}
\xymatrix{
\mathcal{N}_n(X,A) \ar[r]^{\mu} \ar[d]_{\sys{\cdot}} & H_n(X,A,\mathbb Z_2)\ar[d]_{\sys{\cdot}} \\
\check{\mathcal{N}}_{2n}(\sys{(X,A)}) \ar[r]_{\check{\mu}} & \check{H}_{2n}(\sys{(X,A)},\mathbb Z_2)
}
\end{xy}
\] commutes.\\
Until now, all stated results used unoriented homology or $\mathbb Z_2$ coefficients respectively. But we can also show that similar results hold in oriented cases. In \cite{diplom} symmetric squaring is interpreted as a map in \v{C}ech homology with integer coefficients. The important point to note here, however, is that this generalisation does not hold for all dimensions, since symmetric squaring induces an orientation reversing map in odd dimensions. This suggests attempting a generalisation of symmetric squaring to oriented bordism at least in even dimensions.\\
What we can prove in \autoref{orcase} and \autoref{sec:compat} is that similar to the unoriented case symmetric squaring induces a well-defined map in oriented bordism in even dimensions and that this is compatible with the fundamental class transformation:
\begin{them*}
  Let $(X,A)$ be a pair of topological spaces and let $n\in\mathbb N$ be even. Then there exists a well-defined and natural symmetric squaring map \[
\sys{(\hspace{0,5em}\cdot\hspace{0,5em})}\;\;\colon\Omega_n(X,A)\rightarrow\check{\Omega}_{2n}\left(\sys{(X,A)}\right)
\]with the property that the diagram 
\[
\begin{xy}\xymatrixcolsep{3.5pc}
\xymatrix{
\Omega_n(X,A) \ar[r]^{\mu} \ar[d]_{\sys{\cdot}} & H_n(X,A,\mathbb Z)\ar[d]_{\sys{\cdot}} \\
\check{\Omega}_{2n}(\sys{(X,A)}) \ar[r]_{\check{\mu}} & \check{H}_{2n}(\sys{(X,A)},\mathbb Z)
}
\end{xy}
\]commutes.\footnote{The map $\mu$ is the so called fundamental-class homomorphism. It is defined together with its induced map $\check{\mu}$ in \autoref{sec:compat}.}
\end{them*}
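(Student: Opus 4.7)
The plan is to imitate the construction from the unoriented theorems (\autoref{halfun} and \autoref{unnat}), now tracking orientations throughout. The central observation is that, for an oriented $d$-manifold $Y$, the swap involution $\tau \colon Y \times Y \to Y \times Y$, $(p,q) \mapsto (q,p)$, has differential at every point acting as the block swap on $\mathbb{R}^d \oplus \mathbb{R}^d$, whose determinant is $(-1)^d$. Hence $\tau$ preserves the product orientation precisely when $d$ is even. Applied to $Y = M$ with $d = n$ even, this means that $\sys{M} = (M \times M)/\tau$ carries a canonical orientation, inherited from the product orientation on $M \times M$, away from the singular set $pr(\Delta_M)$. This is what makes an oriented version of the construction possible, and it is precisely the step that fails for $n$ odd.

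Given $[M, \partial M; f] \in \Omega_n(X,A)$ with oriented $M$, I construct the target class exactly as in the unoriented case but equipped with this canonical orientation. For every open neighborhood $U$ of $pr(\Delta_X) \cup \sys{A}$ in $\sys{X}$, pick a $\tau$-invariant open neighborhood $V \subset M \times M$ of $\Delta_M \cup (M \times \partial M) \cup (\partial M \times M)$ whose image under $pr \circ (f \times f)$ lies inside $U$. Then $(M \times M) \setminus V$ is a $2n$-manifold-with-boundary on which $\tau$ acts freely and (since $n$ is even) orientation-preservingly, so the quotient $((M \times M) \setminus V)/\tau$ is a compact oriented $2n$-manifold-with-boundary; together with the induced map to $\sys{X}$ it represents a class in $\Omega_{2n}(\sys{X}, U)$. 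As in the unoriented case these classes form a compatible family in the inverse system defining $\check{\Omega}_{2n}(\sys{(X,A)})$, and $\sys{[M, \partial M; f]}$ is defined as the resulting inverse limit element.

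For well-definedness, suppose $(M, f)$ and $(M', f')$ are bordant via an oriented $(W^{n+1}, F)$. The key point here is that, although on $W \times W$ (of dimension $2n+2$) the involution $\tau$ is orientation-\emph{reversing} for $n$ even, the relevant piece for the bordism is the $(2n+1)$-dimensional boundary $\partial(W \times W) = (\partial W \times W) \cup (W \times \partial W)$: $\tau$ swaps these two pieces freely and acts on their $(2n)$-dimensional corner $\partial W \times \partial W$ by swap, which \emph{does} preserve orientation for $n$ even (since $(-1)^n = 1$). Hence $\partial(W \times W)/\tau$, with a $\tau$-invariant neighborhood of $\Delta_W$ removed and projected to $\sys{X}$, is a well-defined oriented $(2n+1)$-manifold; passing to the \v{C}ech limit yields an oriented bordism between $\sys{[M, \partial M; f]}$ and $\sys{[M', \partial M'; f']}$ in $\check{\Omega}_{2n}(\sys{(X,A)})$. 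Naturality follows exactly as in the unoriented case, since $g \times g$ commutes with $\tau$ for any continuous map $g$ of pairs.

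For compatibility with $\mu$, I argue that both routes in the square send $[M, \partial M; f]$ to the image under $(\sys{f})_\ast$ of the \v{C}ech fundamental class $\sys{\sigma_M}$ of $\sys{M}$ relative to its singular set. The right-and-down route applies the integral \v{C}ech homology squaring of \cite{diplom} to $\mu[M, \partial M; f] = f_\ast \sigma_M$, giving $(\sys{f})_\ast \sys{\sigma_M}$ by the known behaviour of \v{C}ech squaring on fundamental classes (valid for $n$ even). The down-and-right route first forms $\sys{[M, \partial M; f]}$ as the \v{C}ech bordism class of the compatible system $\{ ((M \times M) \setminus V)/\tau \}_U$, and then $\check{\mu}$ sends it to the compatible system of their fundamental classes, which assembles into the same element. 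The main obstacle I anticipate is the orientation bookkeeping in the bordism step, specifically verifying that $\partial(W \times W)/\tau$ restricts with the correct signs to $\sys{M}$ and $-\sys{M'}$ while the off-diagonal corner contributions (such as images of $M \times M'$) are absorbed into the relative structure of $\sys{(X,A)}$; the hypothesis $n$ even is the crucial arithmetical input at every such orientation comparison.
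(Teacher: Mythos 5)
Your definition of the map and your naturality argument follow the paper, and your opening observation (that the block swap on $T_pM\oplus T_qM$ has determinant $(-1)^n$, so $\tau$ preserves the product orientation on $M\times M$ precisely when $n$ is even) is exactly the reason the construction works in even dimensions. The compatibility step is also argued along the paper's lines, reducing to the fact that the \v{C}ech symmetric square of the fundamental class is the fundamental class of the symmetric square.

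The well-definedness step, however, is wrong. You propose to use $\partial(W\times W)/\tau$, with a $\tau$-invariant diagonal neighbourhood removed, as the bordism between $\sys{(M_0,\partial M_0;f_0)}$ and $\sys{(M_1,\partial M_1;f_1)}$. This object does not have the required boundary structure: $\partial(W\times W)$ is a \emph{closed} $(2n+1)$-manifold (since $\partial\circ\partial=0$), and the pieces $M_0\times M_0$ and $M_1\times M_1$ sit inside it as codimension-$1$ submanifolds of the corner stratum $\partial W\times\partial W$, not as boundary components; after cutting out a diagonal neighbourhood the only boundary that appears is the cut locus $N$, which still does not contain $\widetilde{M_i\times M_i}$. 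Moreover the orientation claim fails: since $\dim W=n+1$ is odd, $\tau$ is orientation-\emph{reversing} on $W\times W$, hence also on its boundary, so $\partial(W\times W)/\tau$ carries no induced orientation; your observation that $\tau$ is orientation-preserving on the $2n$-dimensional corner $\partial W\times\partial W$ is correct but does not orient the $(2n+1)$-dimensional quotient.

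The paper instead uses the fibred product $B=\{(x,y)\in W\times W : f(x)=f(y)\}=\bar f^{-1}(0)$ of a Morse function $f\colon W\to[0,1]$ adapted to the bordism (\autoref{meinmorse}); this $\tilde B$ (the diagonal removed) is a $\tau$-invariant $(2n+1)$-manifold whose boundary, computed via the regular-value and collar analysis in \autoref{halfun}, really does decompose as $\widetilde{M_0\times M_0}\cup\widetilde{M_1\times M_1}\cup\tilde M$. The orientation issue is then resolved by the relation $\bar f\circ\tau=-\bar f$: the preimage orientation on $\tilde B$ picks up two sign changes under $\tau$ (one from the orientation reversal on $W\times W$, one from multiplication by $-1$ on the target $\mathbb R$), so $\tau$ is orientation-\emph{preserving} on $\tilde B$ and $\tilde B/\tau$ is a genuine oriented bordism. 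This compensating mechanism is the essential idea you are missing, and there is no analogue of it for $\partial(W\times W)$ since that is not a level set of $\bar f$.
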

It is natural to try to compute the homological or bordism symmetric squaring construction for some special spaces. However, computing homology groups of symmetric squared spaces is not easy in general since the involution $(x,y)\mapsto (y,x)$ that we divide by during the symmetric squaring construction has the diagonal of the squared space as a fixed point set. Therefore it does not induce a free action on the squared space. To give an idea of what could make computations in this setting easier, the theory of $G$-spaces is used in \autoref{sec:G-spaces} to prove that for compact and metrizable spaces $X$ there is an isomorphism
\[
\check{H}_i\left(\sys{X},\Delta\right)\approx \check{H}_i\left(\left(X\times X\right)_{\mathbb Z_2}, \Delta\times \mathbb{RP}^\infty \right)
\]
where $\Delta$ denotes the diagonal in $X\times X$. Here, the so-called Borel construction\footnote{See \cite{MR1236839} for details.} $\left(X\times X\right)_{\mathbb Z_2}$ is used which is defined as the orbit space of the diagonal action induced by the coordinate-swapping involution  on the product $(X\times X)\times S^\infty$.\footnote{We implicitly use the universal principal bundle $\mathbb Z_2\rightarrow S^\infty\rightarrow \mathbb{RP}^\infty$ here.}

This thesis contains three chapters. The first of these includes a review of former results concerning symmetric squaring as well as the introduction of the bordism theory named \v{C}ech bordism. In the second chapter we use the achievements of the first one to prove the existence of well-defined symmetric squaring maps in oriented and unoriented bordism and show that these are compatible with the homological symmetric squaring known before. We conclude in the third chapter by giving possibilities to compute symmetric squaring maps for specific topological spaces via the Borel construction.

\newpage~
\thispagestyle{empty}	

\renewcommand{\thechapter}{\Roman{chapter}}
\renewcommand{\thesection}{\thechapter.\arabic{section}}
\renewcommand{\thesubsection}{\thechapter.\arabic{section}.\arabic{subsection}}

\chapter{Background on homology and bordism}

At the core of all considerations in this thesis lies the construction\[
X\leadsto X\times X \leadsto X\times X/\tau=\sys{X},
\]called the symmetric squaring construction. It can be applied to all topological spaces $X$ and is performed as follows. The first step of the construction is taking the cartesian product $X\times X$ of the space $X$. In the second step this product is divided by the coordinate-interchanging involution \begin{align*}\tau\colon X\times X&\rightarrow X\times X\\
(x,y)&\mapsto(y,x).
\end{align*} to give $\sys{X}:=X\times X/\tau$.\\
This chapter is split up into two sections. In the first section we will see how the symmetric squaring construction lives in the world of singular and \v{C}ech homology as well as we will take a short look at how it has been used in \cite{borsuk}.\\
The goal of generalising the construction to singular bordism leads us to the definition of \v{C}ech bordism. The second section of this chapter contains its definition and a comparison between \v{C}ech and singular bordism.

\section{Results in homology}\label{resultshom}
Symmetric squaring was defined as a homological construction in the context of \v{C}ech homology with $\mathbb Z_2$-coefficients in \cite{borsuk}, where it was also used to prove a parametrised Borsuk-Ulam Theorem. The property that makes this construction useful is that for smooth compact manifolds the symmetric squaring induces a map in homology which maps the fundamental class of a manifold to the fundamental class of the symmetric square of that manifold. A generalisation of the construction and the result just stated concerning fundamental classes in \v{C}ech homology with integer coefficients can be found in \cite{diplom}. This section shall give more details about the facts just summarised.

\subsection{Symmetric squaring in \v{C}ech-homology}

Remember that symmetric squaring is the operation of performing a squaring first and then dividing the result by a symmetric map. More precisely we define the symmetric square as follows.

\begin{defi}[Symmetric Squaring]\label{sys}Let $X$ be a topological space and define the coordinate-switching involution $\tau$ by
\begin{align*}
 \tau\colon X\times X&\rightarrow X\times X\\
(x,y)&\mapsto (y,x).
\end{align*}
Then the topological space $X\times X/\tau$ is called the symmetric square of $X$ and it will be denoted by $\sys{X}$. For pairs of topological spaces $(X,A)$, the symmetric squaring is defined in an analogous manner as
\begin{align*}
 \sys{(X,A)}:=(pr(X\times X),pr(X\times A\cup A\times X\cup \Delta)),
\end{align*}
where $pr\colon X\times X\rightarrow X\times X/\tau$ denotes the canonical projection and $\Delta$ denotes the diagonal $\left\{(x,x)|x\in X\right\}\subset X\times X$ in $X\times X$. In particular, $\sys{(X,\emptyset)}=(\sys{X},pr(\Delta))$.
\end{defi}

 The diagonal is added to the subspace of $\sys{(X,A)}$ for technical reasons. Especially if $X$ is a smooth manifold, it is necessary to cut out the diagonal or to at least work relative to it in homology. Since the involution $\tau$ leaves the diagonal fixed, the quotient by $\tau$ does not have a canonical manifold structure there. Outside the diagonal, however, there is a smooth structure which can and will be used to think of $\sys{X}\setminus pr(\Delta)$ as a smooth manifold whenever $X$ is a smooth manifold.\footnote{Details on this are given in \autoref{diagsec}.}\\
The homological symmetric squaring is first introduced on the level of singular chains, which is then shown to induce a well-defined mapping in \v{C}ech homology.\footnote{For homology with $\mathbb Z_2$-coefficients compare Section 3 of \cite{borsuk}. Integer coefficients are used in Chapter 2 of \cite{diplom}.} In detail, the definition on chain level is as follows.

\begin{defi}[Symmetric Squaring in Homology]\label{hom}
Let $k\in \mathbb Z$ be an even integer and let $(X,A)$ be a topological pair. For $\sigma \in H_k(X,A,\mathbb Z)$ choose a representation $\sigma=\sum\limits_{i=1}^n {g_i\sigma_{i}}$  by elements of the $k$-th singular chain group $C_k(X,A,\mathbb Z)$. Then the assignment
\begin{align*}
\sigma=\sum_{i=1}^n {g_i\sigma_{i}} &\mapsto \sys{\sigma} :=
 \sum\limits_{\genfrac{}{}{0pt}{}{i<j}{1\leq i,j \leq n}}{g_i g_j(pr)_{\sharp}(\sigma_i \times \sigma_j)},
\end{align*} induces a map $\sys{(\cdot)}\colon {H}_k(X,A,\mathbb Z)\rightarrow {H}_{2k}(\sys{(X,A)},\mathbb Z)$ in homology. This map shall be called symmetric squaring map in homology.\\
Here $\times$ denotes the simplicial cross product\footnote{An introduction to the simplicial cross product can be found in Section 3.B of \cite{MR1867354}.} and $(pr)_\sharp$ is the chain map induced by the projection $pr\colon X\times X\rightarrow X\times X/\tau.$
\end{defi}
There are two things worth noticing about this definition. The first one is the usage of even dimensions only represented by the usage of even integers $k\in \mathbb Z$. While dealing with chain groups with integer coefficients, we have to take the orientation of the simplices into account. It can be shown\footnote{See Lemma 2.3 in \cite{diplom}.} that the projection induced by the coordinate-interchanging map $\tau$ is an orientation reversing map on simplicial level in odd dimensions as well as it is an orientation preserving map in even dimensions. This limits further considerations to even dimensions when dealing with $\mathbb Z$-coefficients, since there is no canonical orientation of the simplices in the quotient by $\tau$ in odd dimensions.\\
The second aspect which is important to note here is that in the definition of the chain $\sys{\sigma}$ only half of the product simplices $\sigma_i\times\sigma_j$ are added. This alludes to the fact that symmetric squaring can as well be thought of as constructing 'half of the square'.\footnote{Compare Section 1 in \cite{borsuk}.} Giving a detailed explanation of this property relies on the usage of a special way of defining \v{C}ech homology in this context.\footnote{This definition is also used in Section VIII.13 of \cite{MR1335915}.} More precisely, consider the neighbourhoods $U\subset X$ of the subspace $A\subset X$ in a topological pair $(X,A)$ as a quasi-ordered set ordered by inverse inclusion. Then the \v{C}ech homology group of the pair $(X,A)$ is defined to be the inverse limit of the singular homology groups of $(X,U)$ over this quasi-ordered set. This possibility of defining \v{C}ech homology of topological pairs $(X,A)$ as an inverse limit over singular homology groups of neighbourhoods is one of the main reasons why \v{C}ech homology is used throughout this thesis.
\begin{defi}[\v{C}ech Homology]\label{cechhom}
Let $k\in \mathbb N$ be a natural number. For every topological pair of spaces $(X,A)$, we define the $k$-th \v{C}ech homology group $\check{H}_k(X,A)$ as an inverse limit of singular homology groups in the following way.
\begin{align*}
 \check{H}_k(X,A)&:=\varprojlim\left\{H_{k}(X,U)\,|\,  A \subset U \text{ is an open neighbourhood of } A \text{ in } X \right\}
\end{align*}
\end{defi}
Piecing this definition together with the fact that the diagonal was added to the subspace of the symmetric square of a topological pair of spaces in \autoref{sys}, we realize that we can work in homology relative to neighbourhoods of the diagonal every time we are dealing with the relative \v{C}ech homology of the symmetric square. This will turn out to be very useful in a lot of cases.
\begin{rem}[Symmetric Squaring gives Half of the Square]
Let $(X,A)$ be a pair of topological spaces and let $k\in\mathbb N$ be even. Looking at
\begin{align*}
 H_k(X,A)&\rightarrow H_k(X,A)\times H_k(X,A)\overset{\times}{\rightarrow}H_{2k}((X,A)\times (X,A))\overset{pr}{\rightarrow}\check{H}_{2k}(\sys{(X,A)})\\
\sigma=\sum_{i=1}^{n}g_i\sigma_i&\mapsto\sum_{i=1}^{n}g_i\sigma_i\times\sum_{i=1}^{n}g_i\sigma_i \mapsto\sum_{1\leq i,j\leq n}g_i g_j (\sigma_i\times \sigma_j)\mapsto\sum_{1\leq i,j\leq n}g_i g_j pr(\sigma_i\times \sigma_j)
\end{align*} where $\times$ is the outer cross product induced by the simplicial cross product, we see that the result of first squaring and then projecting is twice the image of the symmetric square:
\[ 
2\cdot\sum\limits_{\genfrac{}{}{0pt}{}{i<j}{1\leq i,j \leq n}}{g_i g_j(pr)(\sigma_i \times \sigma_j)}=\sum\limits_{1\leq i,j\leq n}g_i g_j pr(\sigma_i\times \sigma_j).\]
To see that this is true, we use two facts:
\begin{itemize}
 \item $pr(\sigma_i\times\sigma_j)=pr(\sigma_j\times\sigma_i)$ in even dimensions as noted earlier.
 \item Homology groups are taken relative to neighbourhoods of the diagonal here, so by subdivision of simplices\footnote{The standard way of subdividing simplices into smaller simplices is the so-called barycentric subdivision, explained for example in Section III.6 in \cite{MR1335915}.} terms of the form $\sigma_i\times\sigma_i$ can be omitted without changing the sums in homology.
\end{itemize}
Coefficients are taken to be integral in this remark. Using $\mathbb Z_2$-coefficients gives zero as soon as the multiplication by two takes place, so in that case the above only yields that the projection of crossed chains of the form $pr(\sigma \times \sigma)$ is zero in homology with $\mathbb Z_2$-coefficients.
\end{rem}
\v{C}ech homology is shown to be isomorphic to singular homology for many spaces in Proposition 13.17 in \cite{MR1335915}, namely for Euclidean Neighbourhood Retracts. A more detailed discussion on this can be found in \autoref{cechbord}.\\
The special form of \v{C}ech homology is used in the proof of the following theorem as well, while showing how the simplicial symmetric squaring map from \autoref{hom} induces a well-defined map in \v{C}ech homology. This is Theorem 2.7 in \cite{diplom}.

\begin{thm}[Symmetric Squaring is well-defined]\label{symhom}
 Let $k\in \mathbb N$ be even and let $(X,A)\subset W$ be a compact pair and a subset of a smooth manifold $W$. Then the symmetric squaring map from \autoref{hom} induces a well-defined map
\[
 \sys{(\hspace{0,5em}\cdot\hspace{0,5em})}\colon \check{H}_{k}(X,A;\mathbb Z)\rightarrow \check{H}_{2k}\left(\sys{(X,A)};\mathbb Z \right)
\] in \v{C}ech homology.
\end{thm}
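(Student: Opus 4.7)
The plan is to unpack the inverse-limit description of Čech homology from \autoref{cechhom} and construct, for every open neighbourhood of $pr(X\times A\cup A\times X\cup \Delta)$ in $\sys{X}$, a compatible singular-homology map into which the simplicial formula of \autoref{hom} lands. Since $(X,A)$ sits inside the smooth manifold $W$, one has a cofinal system of such neighbourhoods of the form $pr(X\times U\cup U\times X\cup V)$, where $U$ ranges over open neighbourhoods of $A$ in $X$ and $V$ over $\tau$-invariant open neighbourhoods of $\Delta$ in $X\times X$. It therefore suffices to produce, naturally in $U$ and $V$, a map
\[
H_k(X,U;\mathbb Z)\longrightarrow H_{2k}\bigl(\sys{X},\, pr(X\times U\cup U\times X\cup V);\mathbb Z\bigr),
\]
and then assemble these in the limit.

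First, for a given $V$ I would fix a representing cycle $\sigma=\sum_{i=1}^n g_i\sigma_i$ of a class in $H_k(X,U;\mathbb Z)$ and apply iterated barycentric subdivision, measured in an auxiliary metric coming from the smooth structure on $W$, until each $\sigma_i$ has diameter small enough that $\sigma_i(\Delta^k)\times\sigma_i(\Delta^k)\subset V$. By compactness of the support of $\sigma$ this is achieved after finitely many subdivisions, uniformly in $i$, and subdivision does not alter the homology class. With this arranged, every "diagonal" product $(pr)_\sharp(\sigma_i\times\sigma_i)$ lies in $pr(V)$. Since $k$ is even, $(pr)_\sharp(\sigma_i\times\sigma_j)=(pr)_\sharp(\sigma_j\times\sigma_i)$, so the ordering condition $i<j$ in the definition of $\sys\sigma$ becomes unambiguous modulo $pr(V)$; indeed, $(pr)_\sharp(\sigma\times\sigma)=2\sys\sigma$ plus diagonal contributions living in $pr(V)$, as already observed in the remark following \autoref{hom}.

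Next I would verify that $\sys\sigma$ is a cycle in the target relative complex and independent of the choice of representative. For the cycle property, the Leibniz rule
\[
\partial(\sigma_i\times\sigma_j)=(\partial\sigma_i)\times\sigma_j+(-1)^k\sigma_i\times(\partial\sigma_j)
\]
together with $k$ even and $\partial\sigma\in C_{k-1}(U;\mathbb Z)$ shows, after summing over $i<j$ and applying $(pr)_\sharp$, that $\partial(\sys\sigma)$ is supported in $pr(X\times U\cup U\times X)$ modulo further diagonal terms absorbed into $pr(V)$ by the subdivision above. For independence of the representative, replacing $\sigma$ by $\sigma+\partial c$ changes $\sys\sigma$ by $\partial(\sys c)$ together with mixed terms $(pr)_\sharp(\sigma\times c)\pm(pr)_\sharp(c\times\sigma)$ and self-terms $(pr)_\sharp(c\times c)$; the even-dimensional symmetry collapses the two mixed terms into an exact expression, while the self-terms are pushed into $pr(V)$ after sufficient subdivision of $c$. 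Naturality of the construction with respect to shrinking $U$ and $V$ is immediate from the chain-level formula, so the resulting classes assemble into an element of $\check H_{2k}(\sys{(X,A)};\mathbb Z)$.

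The principal obstacle is precisely the simultaneous ambiguity created by the ordering restriction $i<j$ and by the diagonal self-products $\sigma_i\times\sigma_i$: both are only controlled modulo arbitrarily small $\tau$-invariant neighbourhoods $V$ of $\Delta$, which is exactly why the \emph{inverse-limit} formulation of Čech homology must be used rather than ordinary singular homology. The smoothness of $W$ enters here solely to provide the metric needed for a uniform barycentric-subdivision argument near $\Delta$; without it, one could not guarantee that every prescribed $V$ can be reached through subdivision alone. Verifying in detail that all cross and diagonal correction terms can be absorbed into the relative subspace after finitely many subdivisions is the technical heart of the argument and is carried out in \cite{diplom}.
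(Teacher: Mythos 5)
The paper does not supply a proof of this theorem; it is stated with the explicit attribution ``This is Theorem 2.7 in \cite{diplom}.'' So there is no in-paper proof to compare against, only the strategy signalled by the surrounding remark (that even-dimensional $\tau$-invariance plus barycentric subdivision relative to diagonal neighbourhoods are the essential inputs). Your high-level plan --- cofinal system of neighbourhoods of the form $pr(X\times U\cup U\times X\cup V)$, levelwise maps $H_k(X,U)\to H_{2k}(\sys X,\,pr(X\times U\cup U\times X\cup V))$, subdivision using compactness and metrizability, then passage to the inverse limit --- is consistent with that strategy.

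There is, however, a concrete problem with the way you describe the two verification steps, and one of them is a genuine error, not just an omission. In the cycle-property step, you assert that summing $(\partial\sigma_i)\times\sigma_j+\sigma_i\times(\partial\sigma_j)$ over $i<j$ lands in $pr(X\times U\cup U\times X)$ ``together with $\partial\sigma\in C_{k-1}(U)$.'' But $\partial\sigma\in C_{k-1}(U)$ says nothing about the individual $\partial\sigma_i$; faces can cancel in $\partial\sigma$ while lying outside $U$. To make this work one must first use $\tau$-invariance modulo $pr$ (with the sign $(-1)^{k(k-1)}=1$) to re-index the half-sum into the full double sum, producing the identity $\partial(\sys\sigma)=pr_\sharp\bigl((\partial\sigma)\times\sigma\bigr)-\sum_i g_i^2\,pr_\sharp\bigl((\partial\sigma_i)\times\sigma_i\bigr)$; the first summand is controlled by $\partial\sigma\in C_{k-1}(U)$ and the second by the subdivision. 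That re-indexing is not optional --- without it the claimed support is simply false.

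The independence-of-representative step is worse: it is dimensionally inconsistent as written. With $c\in C_{k+1}$, the object ``$\sys{c}$'' has degree $2k+2$, so ``$\partial(\sys{c})$'' has degree $2k+1$; likewise the ``mixed terms $(pr)_\sharp(\sigma\times c)$'' have degree $2k+1$. You want to show that $\sys{(\sigma+\partial c)}-\sys\sigma$, a class in degree $2k$, is a relative boundary, i.e.\ you need a \emph{bounding} $(2k+1)$-chain --- the degree-$2k+1$ objects you list should appear inside a single $\partial(\,\cdot\,)$, not be added to the degree-$2k$ difference. Moreover, the most natural identity one obtains by expanding $\sigma'\times\sigma'-\sigma\times\sigma$ and applying $pr_\sharp$ gives $2\bigl(\sys{\sigma'}-\sys\sigma\bigr)$ as a relative boundary plus diagonal terms; passing from this to $\sys{\sigma'}-\sys\sigma$ itself being a relative boundary requires a separate argument (one cannot divide by $2$ in $H_{2k}$ over $\mathbb Z$), which neither your sketch nor the ``even-dimensional symmetry collapses the mixed terms'' slogan actually supplies. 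This is the real technical content that \cite{diplom} must carry, and deferring to it is acceptable, but the sketch as written would not compile into a proof.
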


\subsection{Properties and Usage}
As noted before, the most important property of the symmetric squaring is that it 'maps fundamental classes to fundamental classes' as soon as homological symmetric squaring of compact smooth manifolds is considered.\footnote{Compare Theorem 3.1 in \cite{borsuk} for $\mathbb Z_2$-coefficients and Proposition 2.13 in \cite{diplom} for $\mathbb Z$-coefficients.} To be able to see that this statement is a sensible one, the term fundamental class has to be explained in the context of \v{C}ech homology of the symmetric square of a manifold.\\ It is known what the term fundamental class means for singular homology of oriented compact manifolds. As \v{C}ech homology of the symmetric square is defined as the inverse limit of singular homology groups relative to neighbourhoods of the diagonal, the fundamental class will be defined as a limit element with respect to neighbourhoods of the diagonal as well. In this context we will regularly use the following notation.
\begin{note}[Reduced by a neighbourhood of the diagonal]\label{note:redneigh}
Let $X$ be a topological space and $X\times X$ its cartesian product. Choose a neighbourhood $U$ of the diagonal $\Delta\subset U\subset X\times X$ in the cartesian product.\\
As soon as '$X\times X$ reduced by a neighbourhood of the diagonal' is mentioned in the following, it shall denote the space $X\times X\setminus U$.
\end{note}

More precisely, for a smooth oriented compact manifold $(B,\partial B)$ of even dimension $k$, we look at the symmetric square $\sys{(B,\partial B)}:=(B\times B/\tau,(\partial(B\times B)\cup \Delta)/\tau)$ and we want to find a reasonable definition of a \v{C}ech fundamental class of this object. For this purpose, we consider all neighbourhoods $V$ of $(\partial(B\times B)\cup\Delta)$\footnote{Note that these $V$ are always neighbourhoods of the diagonal $\Delta$ in $B\times B$.} in $B\times B$, such that $B\times B\setminus V$ and its projection with respect to the coordinate switching map $\tau$ are smooth compact manifolds with boundary.\footnote{About the existence of such compare \autoref{diagsec}.} These 'nice' neighbourhoods are cofinal\footnote{For a definition of cofinality see Section VIII.5 in \cite{MR1335915}.} in all neighbourhoods considered in the inverse limit that defines the \v{C}ech homology group $\check{H}(\sys{(B,\partial B)},\mathbb Z)$, since for every neighbourhood $U$ in that limit we can find a neighbourhood $V$ of the  required form such that $V\subset U$.\\
Every neighbourhood $V$ as above gives rise to an ordinary fundamental class 
\[
\sigma^{V}_{\mathbf f}\in H_{2k}\left(\left(\left(B\times B\right)\text{\textbackslash{}} V\right)/\tau,\partial\left(\left(\left(B\times B\right)\text{\textbackslash{}} V\right)/\tau \right) ,\mathbb Z\right)
\] of the manifold coming from the product $B\times B$ reduced by a neighbourhood of the diagonal. These fundamental classes are the ones that the relative fundamental class of the symmetric square in \v{C}ech homology is made of.

\begin{thm}[Behaviour with respect to Fundamental Classes]\label{fundquadrat}
Let $k\in \mathbb N$ be even. Furthermore, let $(B,\partial B)$ be a $k$-dimensional compact smooth oriented manifold with possibly empty boundary $\partial B$ and let $\sigma_{\mathbf f}\in H_k(B,\partial B,\mathbb Z)$ be its unique fundamental class. The image of this fundamental class under the homological symmetric squaring map is denoted by $\sys{\sigma_{\mathbf f}}$.\\
Then $\sys{\sigma_{\mathbf f}}\in \check{H}_{2k}(\sys{(B,\partial B)})$ is the fundamental class of $\sys{(B,\partial B)}$ in the following sense.\\ For every neighbourhood $U$ of the diagonal in $B\times B$ that appears in the inverse limit defining the group $\check{H}_{2k}(\sys{(B,\partial B)})$ choose a neighbourhood $V\subset U$ as above and consider the corresponding fundamental class $\sigma^{V}_{\mathbf f}\in H_{2k}\left(\left(\left(B\times B\right)\text{\textbackslash{}} V\right)/\tau,\partial\left(\mathbf - \right) ,\mathbb Z\right)$. This can be mapped by inclusion to $i(\sigma^{V}_{\mathbf f})\in H_{2k}\left(\sys{B},U,\mathbb Z\right)$ and can thus be regarded as an element that appears in the \v{C}ech homology of the symmetric square since
\[
\check{H}_{2k}(\sys{(B,\partial B)}) \subset \prod_U H_{2k}\left(\sys{B},U,\mathbb Z\right).\footnote{This follows from a theorem concerning the form of inverse limits in some categories, compare \cite{MR1335915}.}
\]
The symmetric squaring map in homology maps the fundamental class $\sigma_{\mathbf f}$ to the class $\sys{\sigma_{\mathbf f}}\in \check{H}_{2k}(\sys{(B,\partial B)})$ which has the property that
\[
 p(\sys{\sigma_{\mathbf f}})=i(\sigma^{V}_{\mathbf f}) \text{ for all neighbourhoods } U \text{ in the limit,}\footnote{Here we implicitly use the fact that for $V\subset V^{\prime}\subset U$ chosen as above, the inclusions induce maps that send the fundamental class $\sigma^{V}_{\mathbf f}$ to the fundamental class $\sigma^{V^{\prime}}_{\mathbf f}$. This is a property of the inverse limit again, compare \cite{MR1335915}.}
\] where $p$ denotes the projection onto the factor corresponding to $U$ in the inverse limit group $\check{H}_{2k}(\sys{(B,\partial B)})$.
\end{thm}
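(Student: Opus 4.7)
The plan is to verify the required identity \[p(\sys{\sigma_{\mathbf f}}) = i(\sigma^{V}_{\mathbf f})\] in $H_{2k}(\sys{B}, U, \mathbb Z)$ for every open neighbourhood $U$ appearing in the inverse system defining $\check{H}_{2k}(\sys{(B,\partial B)})$; since the \v{C}ech class $\sys{\sigma_{\mathbf f}}$ is by construction nothing but the coherent family of these projections, this yields the theorem. By cofinality (cf.\ \autoref{diagsec}) it suffices to check the identity for a cofinal subfamily of ``nice'' neighbourhoods $V\subset U$ of $\partial(B\times B)\cup\Delta$ such that $M:=(B\times B\setminus V)/\tau$ is a compact smooth oriented $2k$-manifold with boundary and $\partial M\subset U$.

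First I would fix such a nice $V\subset U$ and choose a smooth triangulation of $B$ with top simplices $\sigma_1,\dots,\sigma_n$ and orientation signs $g_i\in\{\pm 1\}$ so that $\sigma_{\mathbf f}=\sum_{i=1}^n g_i\sigma_i$ represents the fundamental class. Applying iterated barycentric subdivision, I can shrink the simplices until every diagonal product $\sigma_i\times\sigma_i$ is contained in $V$ and the product cell structure on $B\times B$ matches $\partial V$ up to a small deformation. Because $k$ is even, $\tau$ acts in an orientation-preserving manner on $B\times B$, so the two-to-one covering $pr\colon B\times B\setminus V\to M$ is orientation-preserving as well, and the off-diagonal product simplices pair up in $\tau$-orbits $\{\sigma_i\times\sigma_j,\sigma_j\times\sigma_i\}$ that project to one top simplex $pr(\sigma_i\times\sigma_j)$ of $M$ carrying the orientation $g_ig_j$. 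Consequently the combinatorial fundamental cycle of $(M,\partial M)$ reads \[\sigma^{V}_{\mathbf f} \;=\; \sum_{\substack{i<j\\ \sigma_i\times\sigma_j\cap V=\emptyset}} g_ig_j \, pr(\sigma_i\times\sigma_j),\] which is precisely the portion of \[\sys{\sigma_{\mathbf f}} \;=\; \sum_{i<j} g_ig_j \, pr_\sharp(\sigma_i\times\sigma_j)\] coming from pairs disjoint from $V$. The remaining terms of $\sys{\sigma_{\mathbf f}}$ are supported in $\overline{pr(V)}\subset U$, so the two representatives differ by a chain supported in $U$ and therefore define the same class in $H_{2k}(\sys{B},U,\mathbb Z)$. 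Since the family $\{i(\sigma^V_{\mathbf f})\}$ is compatible with respect to refinements of $V$ (a standard naturality property of relative fundamental classes under open inclusions of oriented manifolds), the inverse limit description of $\check{H}_{2k}$ then produces the required equality of \v{C}ech classes.

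The main obstacle in this plan will be the triangulation/transversality step: a pure product triangulation of $B\times B$ need not meet $\partial V$ transversally, so $M$ will not directly inherit a combinatorial triangulation. I would handle this either by choosing $V$ as a regular neighbourhood of the diagonal built from the product cell structure itself (using the nice neighbourhoods constructed in \autoref{diagsec}) and then passing to a simplicial approximation, or by working with smooth triangulations of manifolds with corners and invoking the chain-level subdivision chain homotopy; both routes are technical but standard. Once this compatibility is arranged, the orientation computation and the cancellation of diagonal terms modulo $U$ are essentially algebraic manipulations that mirror the ``half of the square'' observation made before the theorem, and the doubling phenomenon exhibited there is avoided because the direct comparison is performed on the quotient triangulation of $M$ rather than via pushforward along the covering.
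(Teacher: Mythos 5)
Note first that the paper itself gives no proof of this theorem: the result is cited from Theorem 3.1 of \cite{borsuk} ($\mathbb Z_2$-coefficients) and Proposition 2.13 of \cite{diplom} ($\mathbb Z$-coefficients), so there is no in-text argument to compare against. Evaluated on its own, your plan has the right global shape -- reduce to one nice neighbourhood $V\subset U$ at a time and compare representing chains on $M:=(B\times B\setminus V)/\tau$ -- but it contains a genuine gap that you yourself flag. The displayed identity
\[
\sigma^{V}_{\mathbf f} \;=\; \sum_{\substack{i<j\\ \sigma_i\times\sigma_j\cap V=\emptyset}} g_ig_j \, pr(\sigma_i\times\sigma_j)
\]
is not correct as written: the product cells that partially meet $\partial V$ are simply dropped from the sum, so the right-hand side is a chain whose support does not cover $M$ and whose boundary is not contained in $\partial M$; it is therefore not a relative cycle for $(M,\partial M)$, let alone a fundamental cycle. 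A smaller but real issue is that $\sigma_i\times\sigma_j$ in the simplicial cross product is a shuffle sum of $\binom{2k}{k}$ simplices rather than a single simplex, so ``one top simplex $pr(\sigma_i\times\sigma_j)$ of $M$'' is not a meaningful object and the orientation bookkeeping has to be carried out on the shuffle pieces. You propose two workarounds for the transversality problem, but neither is carried out, and that step is precisely where the actual content of the proof lives.

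A cleaner route avoids triangulating $M$ altogether and is already suggested by the ``half of the square'' remark that precedes the theorem: verify that $\sys{\sigma_{\mathbf f}}$ restricts to a generator of the local homology group $H_{2k}(M,M\setminus\{x\};\mathbb Z)$ at every interior point $x$ of $M$, and then invoke the uniqueness of fundamental classes of compact oriented manifolds with boundary. Away from the diagonal, $pr$ is a two-fold covering, so a small neighbourhood of $x=pr(x_1,x_2)$ in $M$ is identified with a small neighbourhood of $(x_1,x_2)$ in $B\times B$; after enough subdivision the restriction of $\sys{\sigma_{\mathbf f}}$ to that neighbourhood is the single cross-product term $g_{i_0}g_{j_0}\,\sigma_{i_0}\times\sigma_{j_0}$ containing $(x_1,x_2)$ in its interior, and this is a local orientation class for the product orientation. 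Evenness of $k$ guarantees that the resulting sign does not depend on which of the two preimages $(x_1,x_2)$, $(x_2,x_1)$ is chosen. This yields $p(\sys{\sigma_{\mathbf f}})=i(\sigma^{V}_{\mathbf f})$ without imposing any transversality hypothesis on $\partial V$, which is exactly the hurdle your proposal defers.
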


What makes this property valuable is that it has been used in a proof of a generalised Borsuk-Ulam Theorem in \cite{borsuk}, namely it is a key ingredient in the proof of the main Theorem 2.4 in the just cited reference. The classical Borsuk-Ulam antipodes Theorem states that for all continuous maps $f\colon S^n\rightarrow \mathbb R^n$ there exist antipodal points that are mapped to the same point. In other words it says that the solution set $\left\{v\in S^n|f(v)=f(-v)\right\}$ is not empty for continuous maps $f\colon S^n\rightarrow \mathbb R^n$.\\
In \cite{borsuk}, among other things, solution sets of more complicated forms, such as $\left\{(w,v)\in W\times S^n|F(w,v)=F(w,-v)\right\}$ for parametrised Borsuk-Ulam situations are studied with respect to their homological properties. Here $F\colon W\times S^n\rightarrow \mathbb R^n$ is a continuous map and can be thought of as a family of Borsuk-Ulam maps parametrised by the compact manifold $W$.\\
The main Theorem 2.4 in \cite{borsuk} contains a statement concerning a similar but more general solution set. More precisely, it is proven that the homology group of such a more general solution set contains an element that is mapped to the fundamental class of the compact manifold $W$.\\
This brief explanation can give an idea how \autoref{fundquadrat} can be a useful tool in the cited proof. It inserts a map that is known to map fundamental classes to fundamental classes into a setting where the existence of an element being mapped to a fundamental class is to be proven. That is why in the proof of the generalised Borsuk-Ulam Theorem in \cite{borsuk} the construction of the desired homology class is done by forming the natural intersection pairing\footnote{The existence of such is a special property of \v{C}ech homology, compare Section VIII.13 in \cite{MR1335915}} of a homology class with a symmetric squared one.\\
The motivation for proving this type of generalised Borsuk-Ulam Theorem originally comes from game theory, namely from the theory of games for two players with incomplete information, which are repeated infinitely often as introduced in \cite{MR1342074}. As explained in \cite{borsuk}, the parametrised Borsuk-Ulam Theorem can be used to show the existence of equilibria in games of this type.\\
For reasons of readability, definitions and theorems in this section were formulated using integer coefficients only. But with coefficients in $\mathbb Z_2$ they remain true nonetheless.
\begin{rem}[$\mathbb Z_2$-coefficients]
For $\mathbb Z_2$-coefficients the symmetric squaring in \v{C}ech homology is defined in the same way as above. Omitting coefficients $g_i$ in \autoref{hom} of the simplicial map, \autoref{symhom} and \autoref{fundquadrat} hold for $\mathbb Z$-coefficients replaced by $\mathbb Z_2$-coefficients without the restriction on the dimension or on orientability.\footnote{These facts are proven in \cite{borsuk}.}
\end{rem}

\section{Bordism and \v{C}ech bordism}

This section contains two parts. In the first part, the definition and the most important properties of singular bordism are reviewed without proofs. References mainly are \cite{MR548463,3110162369}. For a more detailed introduction into various bordism theories the reader may also consult \cite{MR0248858}.\\
The second part contains the definition of the so called \v{C}ech bordism. It receives its name from its similarity to \v{C}ech homology and it is used later for generalising the symmetric squaring map to bordism for the same reasons it was used in homology in connection with symmetric squaring in the first place.

\subsection{Bordism}
Since our aim is to look at the symmetric squaring construction in the setting of bordism, we first need to clarify which of the various bordism theories we are going to use. The following definitions can be found in \cite{MR548463} or \cite{3110162369}.

\begin{defi}[oriented singular manifold]
Let $(X,A)$ be a pair of topological spaces. A smooth compact oriented $n$-manifold with boundary $(M,\partial M)$ together with a continuous map $f\colon(M,\partial M)\rightarrow(X,A)$ is called a singular oriented $n$-manifold in $(X,A)$ and is denoted by $(M,\partial M;f)$.
\end{defi}

While singular homology works with maps from simplices to topological spaces, singular bordism deals with maps from smooth manifolds to topological spaces. On smooth manifolds, the equivalence relation called 'bordant' was first introduced by Thom in \cite{MR0061823} and was called 'cobordant'. Later Atiyah\footnote{Compare \cite{MR0126856}.} suggested to distinguish 'cobordism' and the dual construction 'bordism' in analogy to the concepts of cohomology and homology. This is the most common way of denoting these concepts until today.\\
The rough idea introduced by Thom of two compact oriented $n$-manifolds $M_0$ and $M_1$ without boundary being bordant is that their disjoint union $M_0\sqcup -M_1$ is diffeomorphic via an orientation preserving diffeomorphism to the boundary of a compact manifold of dimension $n+1$. Here $-M_1$ is meant to be the manifold $M_1$ with the opposite orientation. This defines an equivalence relation on the oriented diffeomorphism classes of closed oriented  $n$-manifolds. The set of equivalence classes is denoted by $\Omega_n$ and it is a group with addition induced by disjoint union.\footnote{Compare Theorem 2.1 in\cite{MR548463}.}\\
This concept gives rise to the following definition of oriented singular $n$-manifolds with boundary being bordant.

\begin{defi}[bordism, bordant] \label{defbord}
A bordism between two singular oriented $n$-manifolds $(M_0,\partial M_0;f_0)$ and $(M_1,\partial M_1;f_1)$ is a triple $(B,\partial B;F)$ which satisfies
\begin{enumerate}
	\item $B$ is a compact oriented $(n+1)$-manifold with boundary.
	\item The boundary $\partial B$ with the induced orientation of $B$ is the union of three manifolds with boundary $\partial B=M_0\cup -M_1\cup M^{\prime}$ such that $\partial											  M^{\prime}=\partial M_0 \sqcup \partial (-M_1)$ and $M_0\cap M^{\prime}=\partial M_0$ as well as $-M_1\cap M^{\prime}=\partial (-M_1)$.
	\item The continuous map $F\colon B\rightarrow X$ is equal to $f_i$ on $M_i$, i.e. $F_{|{M_{i}}} = f_i$.
	\item The manifold $M^{\prime}$ is mapped to $A$, i.e. $F(M^{\prime})\subset A$.
\end{enumerate}
The oriented singular manifolds $(M_0,\partial M_0;f_0)$ and $(M_1,\partial M_1;f_1)$ are called bordant if there exists a bordism between them. If a singular manifold is bordant to the empty manifold it is said to be zero-bordant.
\end{defi}

The picture below can serve as an illustration of the definition. But it is important to note that the manifold in the picture has to be thought of as being a solid smooth object as soon as manifolds with nonempty boundary are to be considered.
\bigskip

\[ \xy
(0,-3)*\ellipse(3,1){.};
(0,-3)*\ellipse(3,1)__,=:a(-180){-};
(-3,6)*\ellipse(3,1){-};
(3,6)*\ellipse(3,1){-};
(-13,12)*{M_0};
(-13,-6)*{M_1};
(-7,3)*{B};
(-3,12)*{}="1"; 
(3,12)*{}="2"; 
(-9,12)*{}="A2";
(9,12)*{}="B2"; 
"1";"2" **\crv{(-3,7) & (3,7)};
(-3,-6)*{}="A";
(3,-6)*{}="B"; 
(-3,1)*{}="A1";
(3,1)*{}="B1"; 
"A";"A1" **\dir{-};
"B";"B1" **\dir{-}; 
"B2";"B1" **\crv{(8,7) & (3,5)};
"A2";"A1" **\crv{(-8,7) & (-3,5)};
(40,3)*{}=XA;
"XA"*{(X,A)};
{\ar^{f{_0}} (10,12)*{};(35,4)*{}};%
{\ar^{F} (10,3)*{};"XA"};%
{\ar_{f{_1}} (6,-5.7)*{};(35,2)*{}};%
\endxy\]
\bigskip

This equivalence relation reduces to the cobordism relation defined by Thom in \cite{MR0061823} as soon as the topological space considered is the one point space and the occurring manifolds have empty boundary.

\begin{rem}It can be shown that the bordism relation defined in \autoref{defbord} is an equivalence relation on the oriented singular manifolds in $(X,A)$.\footnote{Compare Theorem VIII(13.1) in \cite{3110162369}.} As before, there can be introduced a group structure on the set of equivalence classes of oriented singular $n$-manifolds in $(X,A)$ with addition induced by disjoint union.  The resulting groups are denoted by $\Omega_n(X,A)$ and are called the $n$-th relative bordism groups of $(X,A)$. Furthermore, we denote the oriented bordism class of $(M,\partial M;f)$ by $\left[M,\partial M;f\right]$.
\end{rem}

Singular bordism is similar to singular homology also in the matter of functoriality. Specifically, $\Omega_{\ast}$ can be regarded as a functor from the category of topological pairs of spaces to the category of abelian groups, which gives rise to a generalised homology theory satisfying all Eilenberg-Steenrod axioms for homology\footnote{These are stated in \cite{MR0050886} in Section I.3.} with the exception of the dimension axiom. To understand this we first have to see how the functor $\Omega$ is defined on morphisms and how the boundary operator for the homology theory is induced.\\
To a map $\varphi \colon(X,A)\rightarrow(X_1,A_1)$ there is associated a natural homomorphism $\varphi_{\ast}\colon\Omega_n(X,A)\rightarrow\Omega_n(X_1,A_1)$ given by $\varphi_{\ast}\left[M,\partial M;f\right]=\left[M,\partial M;\varphi\circ f\right]$.  Furthermore, the assignment $\left[M,\partial M;f\right]\mapsto\left[\partial M,\emptyset;f_{|{\partial M}} \right]$ induces a well-defined boundary homomorphism $\partial\colon \Omega_n(X,A)\rightarrow\Omega_{n-1}(A).$

\begin{thm}[Theorem (5.1) in \cite{MR548463}]
On the category of pairs of topological spaces and maps of pairs the bordism functor $\left\{\Omega_\ast(X,A),\varphi_\ast,\partial\right\}$ satisfies six of the Eilenberg-Steenrod axioms for a homology theory. However, for a single point $p$ we have $\Omega_n(p)\simeq \Omega_n$, the oriented Thom bordism group. This means that the bordism functor fails to satisfy the dimension axiom and this makes it a so-called generalised homology theory.
\end{thm}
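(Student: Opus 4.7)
The plan is to check each of the six Eilenberg--Steenrod axioms (functoriality, homotopy invariance, the long exact sequence of a pair, excision, naturality of the connecting homomorphism, and the wedge/additivity axiom) in turn, and then separately compute $\Omega_n(p)$ to exhibit the failure of the dimension axiom. Most verifications follow the same schema: one translates the assertion into a statement about singular manifolds $(M,\partial M;f)$ in $(X,A)$ and produces the required bordism by geometric surgery, so the strategy throughout is \emph{construct the bordism explicitly}.

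First I would verify that $\varphi_\ast$ and $\partial$ are well defined on equivalence classes (if $(B,\partial B;F)$ is a bordism between $(M_i,\partial M_i;f_i)$, then $(B,\partial B;\varphi\circ F)$ is a bordism of their images, and the $M'$-part of $\partial B$ gives a bordism between the restrictions $f_i|_{\partial M_i}$ in $A$), and that composition and identity are respected. Homotopy invariance is then immediate: given a homotopy $H\colon X\times I\to X_1$ of pairs between $\varphi_0$ and $\varphi_1$, the singular manifold $(M\times I,\partial(M\times I); H\circ (f\times \mathrm{id}))$ provides the required bordism between $\varphi_{0\ast}[M,\partial M;f]$ and $\varphi_{1\ast}[M,\partial M;f]$, after bending the corners of $M\times I$ smooth. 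The additivity axiom (for disjoint unions or wedges in a suitable category) is built into the definition, since bordism classes are added by disjoint union.

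The heart of the proof is exactness of
\[
\cdots\to \Omega_n(A)\xrightarrow{i_\ast}\Omega_n(X)\xrightarrow{j_\ast}\Omega_n(X,A)\xrightarrow{\partial}\Omega_{n-1}(A)\to\cdots
\]
and excision. Exactness at $\Omega_n(X,A)$ is the step I expect to be the main obstacle: given $[M,\partial M;f]$ with $\partial[M,\partial M;f]=0$, one has a zero-bordism $(W,\partial W;G)$ of $(\partial M,f|_{\partial M})$ in $A$, and one must glue $M$ to $W$ along $\partial M$ to obtain a closed singular manifold $(M\cup_{\partial M}W; f\cup G)$ in $X$ whose class in $\Omega_n(X)$ maps to the original class in $\Omega_n(X,A)$; this requires care with smooth structures (straightening the angle at the identification) and with orientations. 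Exactness at the other two spots is similar but easier: $\partial\circ j_\ast=0$ because a class coming from $\Omega_n(X)$ has empty boundary, and $j_\ast\circ i_\ast=0$ because a singular manifold in $A$, viewed in $(X,A)$, is zero-bordant via the cylinder $M\times I$ with its top and sides mapped into $A$. For excision with $\bar U\subset\mathrm{int}\,A$, one takes a representative $(M,\partial M;f)$ in $(X,A)$ and uses a smooth approximation plus transversality to push $f^{-1}(\bar U)$ out of $M$ via a bordism, producing a representative whose image avoids $U$; naturality of $\partial$ is a direct consequence of its definition.

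Finally, for the dimension axiom, a singular $n$-manifold in the one-point space $p$ is a map $f\colon (M,\partial M)\to (p,\emptyset)$, which forces $\partial M=\emptyset$, so classes in $\Omega_n(p)$ are precisely oriented diffeomorphism classes of closed $n$-manifolds modulo the Thom bordism relation, i.e.\ $\Omega_n(p)\cong\Omega_n$. Since $\Omega_n\neq 0$ for several $n>0$ (for example $\Omega_4\cong\mathbb Z$, generated by $\mathbb{CP}^2$), the dimension axiom fails, which completes the proof.
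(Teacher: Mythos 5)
The paper does not prove this result; it is stated as Theorem~(5.1) of Conner--Floyd \cite{MR548463} and simply cited, so there is no in-text proof to compare against. Your sketch does reproduce the standard Conner--Floyd argument: well-definedness of $\varphi_\ast$ and $\partial$, homotopy invariance via cylinders with corners straightened, exactness of the long exact sequence with the crucial gluing $M\cup_{\partial M}W$ at $\Omega_n(X,A)$, excision via transversality, and the computation $\Omega_n(p)\cong\Omega_n$ with $\Omega_4\cong\mathbb Z$ generated by $\mathbb{CP}^2$ to kill the dimension axiom. One slip worth correcting: the ``six Eilenberg--Steenrod axioms'' the theorem refers to are the two functoriality axioms (identity and composition), naturality of $\partial$ (the commutativity axiom), exactness, homotopy, and excision; additivity for wedges or infinite disjoint unions is the later Milnor axiom and is \emph{not} on the Eilenberg--Steenrod list (see \cite{MR0050886}, Section~I.3, which the paper itself cites), so you have silently merged the two functoriality axioms into one and inserted additivity to get your count to six. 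Also, in your exactness argument at $\Omega_n(X,A)$, after forming $V=M\cup_{\partial M}W$ you should exhibit the bordism between $(V,\emptyset;f\cup G)$ and $(M,\partial M;f)$ concretely (a suitably bent $V\times[0,1]$ whose $M^{\prime}$-part is $W\times\{1\}\cup(\partial M)\times[0,1]$ mapped into $A$); as written you assert the existence without saying what it is, and that is precisely the point where corner-straightening and orientation bookkeeping must be done.
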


All definitions and theorems in this section were given using oriented manifolds. Of course, bordism can be studied for unoriented manifolds as well. The bordism relation is defined in the same way as seen in \autoref{defbord} except that there are no requirements on orientability needed.

\begin{rem}\label{unoriented}
The unoriented bordism relation is an equivalence relation as well and the unoriented relative bordism group of a topological pair $(X,A)$ in dimension $n$ is denoted by $\mathcal{N}_n(X,A)$. It defines a functor just as oriented bordism does and satisfies the Eilenberg-Steenrod axioms for a homology theory except the dimension axiom. The unoriented relative bordism groups have been determined for all $CW$-pairs $(X,A)$ in terms of the homology groups of these $CW$-pairs, compare Theorem 8.3 in \cite{MR548463}.
\end{rem}

\subsection{\v{C}ech-bordism}\label{cechbord}

The reasons for using \v{C}ech homology as a homology theory in connection with the symmetric squaring construction that were discussed in the first section of this chapter were:
\begin{itemize}
 \item Because of the special way \v{C}ech homology is defined as an inverse limit with respect to neighbourhoods, it is always possible to work relative to the diagonal when the homology of the symmetric square is examined.
\item There exists a natural intersection pairing of \v{C}ech homology classes.
\item \v{C}ech homology is isomorphic to singular homology for Euclidean Neighbourhood Retracts.
\end{itemize}
The first and the last of these properties are still useful when transported to the world of bordism. In this section we will see why this is the case and how these properties are true for a theory of \v{C}ech bordism, which is to be defined in this section as well.\\
Our aim is to construct a well-defined symmetric squaring map in (un)oriented bordism, namely \[\sys{(\hspace{0,5em}\cdot\hspace{0,5em})}\colon \mathcal{N}_n(X,A)\rightarrow\mathcal{N}_{2n}( \sys{(X,A)})\text{ and }\sys{(\hspace{0,5em}\cdot\hspace{0,5em})}\colon \Omega_n(X,A)\rightarrow\Omega_{2n}( \sys{(X,A)})\]
\begin{rem}[Oriented vs. Unoriented Bordism]\label{rem:readability}For reasons of readability we will mostly stick to the case of the oriented bordism functor $\Omega$ in this subsection. Nonetheless, the results of this subsection are true for unoriented bordism as well and are proven analogously. In Chapter 2, where the constructed symmetric squaring map is discussed in detail, we will distinguish strictly between the two cases of oriented and unoriented bordism.
 \end{rem}
In order to lift the symmetric squaring construction to bordism, there is to be assigned a singular $2n$-manifold $\sys{\left[M,\partial M;f\right]}\in \Omega_{2n}( \sys{(X,A)})$ to a given singular $n$-manifold $\left[M,\partial M;f\right]\in \Omega_n(X,A)$. Symmetric squaring as defined in \autoref{sys} can be performed on all topological spaces, so it can be performed on smooth manifolds as well. Furthermore, a map $f\colon (M,\partial M)\rightarrow(X,A)$ induces a mapping $\sys{f}\colon\sys{(M,\partial M)}\rightarrow\sys{(X,A)}$ via the assignment $\sys{f}\left[m_1,m_2\right]=\left[f(m_1),f(m_2)\right]$. That is why it seems to be a natural choice to define \[\sys{\left[M,\partial M;f\right]}:=\left[\sys{(M,\partial M)};\sys{f}\right]
\]at first glance.
But unfortunately it is not that easy. Since the involution $\tau$ has the diagonal $\Delta\subset M\times M$ as a fixed point set, $\sys{(M,\partial M)}$ cannot be given the structure of a smooth manifold there coming from the smooth structure of $M$. The symmetric squaring image object suggested above would therefore not be an element in 
$\Omega_{2n}( \sys{(X,A)})$. As in the case of homology the way out is rather looking at an inverse limit of bordism groups, which can be thought of as a \v{C}ech-version of singular bordism. 
So there has to be made a slight change of the formulation of our aim:
We wish to construct a well-defined map \[\sys{(\hspace{0,5em}\cdot\hspace{0,5em})}\;\colon \Omega_n(X,A)\rightarrow\check{\Omega}_{2n}( \sys{(X,A})),\]
where $\check{\Omega}_{2n}( \sys{X,A})$ is defined to be an inverse limit involving neighbourhoods of the diagonal, namely
\[
\varprojlim\left\{\Omega_{2n}(\sys{X},U_{\Delta})\,|\,U_{\Delta} \text{ is an open neighbourhood of } pr(X\times A\cup A\times X\cup\Delta)  \right\}
\] and an analogous unoriented bordism version of this.\\Being able to work relative to a neighbourhood of the (projected) diagonal in $X\times X/\tau$ can be thought of as working relative to a neighbourhood of the diagonal in $M\times M$ as well, because of this remark from Section 5 in \cite{MR548463}.
\begin{rem}\label{relbord}
 Let $V^n\subset M^n$ be a compact regular $n$-submanifold with boundary in a compact $n$-manifold $M^n$ without boundary. If $f\colon M^n\rightarrow X$ is a map with $f(M^n\setminus V^{\circ})\subset A$, then $\left[M^n,f\right]=\left[V^n,f_{|V^n}\right]$ in $\Omega_n(X,A)$.\footnote{We use $(-)^{\circ}$ to denote the interior.}
\end{rem}
Roughly speaking, we can think of $(V^n,f_{|V^n})$ here as being a singular squared manifold with a neighbourhood of the diagonal removed. If it can be assured that the neighbourhood of the diagonal of the squared manifold is mapped by $f$ to the neighbourhood of the diagonal of the squared space, regarding \v{C}ech bordism groups of the squared manifold is as good as looking at \v{C}ech bordism groups of the squared manifold reduced by a neighbourhood of the diagonal.\\
It will be shown later that there is a canonical way to transport a definition of symmetric squaring into the above mentioned inverse limit setting. However, first there is more to say about this construction that we shall call \v{C}ech bordism. 
\begin{defiprop}[(Un)oriented \v{C}ech bordism]\label{cechbordism}
Let $n\in \mathbb N$ be a natural number. For every topological pair of spaces $(X,A)$, we define the limit groups $\check{\Omega}_n(X,A)$ and $\check{\mathcal{N}}_n(X,A)$ as follows.
\begin{align*}
 \check{\Omega}_n(X,A)&:=\varprojlim\left\{\Omega_{n}(X,U)\,|\,  A \subset U \text{ is an open neighbourhood of } A \text{ in } X \right\}\\
\check{\mathcal{N}}_n(X,A)&:=\varprojlim\left\{\mathcal{N}_{n}(X,U)\,|\,  A \subset U \text{ is an open neighbourhood of } A \text{ in } X \right\}
\end{align*}
This gives \v{C}ech-bordism-functors \[
\check{\Omega}\; , \; \check{\mathcal{N}}\colon \mathsf{Top^2}\rightarrow \mathsf{Grp}
\] which associate groups to topological pairs of spaces in the way noted above.
\end{defiprop}
This is a proposition as well as a definition since it has to be proven that the defined assignments really form functors. In order to prove this, we will first have to define what $\check{\Omega}$ and $\check{\mathcal{N}}$ associate to morphism in the category of topological pairs.
\begin{proof}
Although the notation involves only the oriented bordism groups from now on, everything in this proof is as well true for the unoriented case.\\
 To define how the \v{C}ech-bordism functor associates morphisms in the category of groups to morphisms in the category of topological pairs, we use the universal property of inverse limits.\\ Let $g\colon(X,A)\rightarrow(Y,B)$ be a continuous map between two pairs of topological spaces. This induces a unique map $\check{\Omega}(g)\colon\check{\Omega}(X,A)\rightarrow\check{\Omega}(Y,B)$ as follows.\\ Per definition, the inverse limit $\check{\Omega}(Y,B)$ always comes together with a projection $\pi_V$ for every open neighbourhood $V$ of $B$ such that 
\[\xymatrix{
&\check{\Omega}(Y,B)\ar[rd]_{\pi_V}\ar[ld]^{\pi_{V^{\prime}}}&\\
\Omega(Y,V^{\prime})\ar[rr]^{\Omega(i)} & &\Omega(Y,V) \\
}\] commutes for all $V^{\prime}\subset V$ and inclusions $i\colon V^{\prime}\hookrightarrow V$.\\
Furthermore it has the universal property that for every other such pair $(G,\psi_V)$ of a group $G$ and maps $\psi_V$ from $G$ to $\Omega(Y,V)$ for every $V$ there exists a unique homomorphism $\phi\colon G\rightarrow \check{\Omega}(Y,B)$ that makes the following diagram commutative
\[
\xymatrix{
&G\ar[ddl]_{\psi_{V^{\prime}}}\ar[ddr]^{\psi_V}\ar@{.>}[d]^{\phi}&\\
&\check{\Omega}(Y,B)\ar[rd]_{\pi_V}\ar[ld]^{\pi_{V^{\prime}}}&\\
\Omega(Y,V^{\prime})\ar[rr]^{\Omega(i)} & &\Omega(Y,V)
}
\] for all $V^{\prime}\subset V$.\\
Looking at the universal property with respect to the pair $(\check{\Omega}(X,A),g_V)$, where $g_V$ is defined by \[g_V\colon\check{\Omega}(X,A)\stackrel{\pi_{g^{-1}(V)}}{\rightarrow}\Omega(X,g^{-1}(V))\stackrel{\Omega(g)}{\rightarrow}\Omega(Y,V),\] we see the unique induced map $\check{\Omega}(g)$ in the diagram
\[\xymatrix{
&\check{\Omega}(X,A)\ar[ddl]_{g_{V^{\prime}}}\ar[ddr]^{g_V}\ar@{.>}[d]^{\check{\Omega}(g)}&\\
&\check{\Omega}(Y,B)\ar[rd]_{\pi_V}\ar[ld]^{\pi_{V^{\prime}}}&\\
\Omega(Y,V^{\prime})\ar[rr]^{\Omega(i)} & &\Omega(Y,V).
}
\]
 That \v{C}ech bordism preserves identity morphisms and composition of morphism follows directly from the corresponding properties of bordism using the universality of the inverse limit again.
\end{proof}

\begin{rem}As was already pointed out before, the above definition and notation are derived from the similarity to {\v C}ech-homology as defined in \autoref{cechhom}.
\end{rem}

One reason why it was sensible to work with \v{C}ech homology in connection with the results from \autoref{resultshom} was that \v{C}ech homology is isomorphic to singular homology for a lot of interesting topological spaces. The same is true for \v{C}ech bordism, so in many cases working with {\v C}ech-bordism is as good as working with singular bordism, since the resulting groups are isomorphic. We will prove this now.\\ 
As a preparation to prove the next proposition, we need a definition and a lemma. The proof of \autoref{ret} uses Theorem 3 from \cite{MR0073982} and the technique used in the proof of Proposition IV. 8.6 in \cite{MR1335915}.

\begin{defi}[Euclidean Neighbourhood Retract, ENR]\label{def:ENR}
 A topological space $X$ is called a Euclidean Neighbourhood Retract (ENR) if $X$ is homeomorphic to a subspace $Y\subset \mathbb R^n$, which is a neighbourhood retract, i.e. there exists a neighbourhood $U$ of $Y$ in $\mathbb R^n$ and a retraction $r\colon U\rightarrow Y$ such that $r\circ i_{(Y\hookrightarrow U)}=id_Y$. 
\end{defi}

Examples for spaces which are ENRs are compact manifolds with or without boundary as well as finite $CW$ complexes.\footnote{Compare Corollaries A.9 and A.10 in \cite{MR1867354}.} Dold also proves in Proposition IV.8.10 in \cite{MR1335915} that a Hausdorff space $X$ which is a finite union of ENRs, each of which is open in $X$, is itself an ENR.

\begin{lem}\label{ret}
 Let $X$ and $A\subset X$ be ENRs. Then there exists an open neighbourhood $U_0$ of $A$ and a map $r_0\colon (X,U_0)\rightarrow(X,A)$ such that $r_0$ restricted to $A$ is equal to the inclusion of $A$ into $X$ and the composition
\[
(X,A)\stackrel{i_0}{\hookrightarrow}(X,U_0)\stackrel{r_0}{\rightarrow}(X,A)
\] is homotopic to the identity, i.e. $id_{(X,A)}\simeq r_0\circ i_0$.\\
The neighbourhood $U_0$ of $A$ can be chosen small enough to be contained in any other given neighbourhood $U$ of $A$.
\end{lem}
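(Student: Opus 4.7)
The plan is to reduce everything to the ambient Euclidean space provided by the ENR hypothesis on $X$, and to construct $r_0$ by linearly interpolating between the identity and a local retraction onto $A$. First I embed $X\subset\mathbb R^n$ as a neighbourhood retract, obtaining an open set $W\supset X$ and a retraction $R\colon W\to X$. Since $A$ is also an ENR, the characterisation of sub-ENRs inside an ENR (Dold's Proposition IV.8.11, which rests on Hu's Theorem 3) furnishes an open set $O\subset X$ together with a retraction $\rho\colon O\to A$; in particular $A$ is closed in $O$ as the fixed point set of the continuous self-map $\rho$ of the Hausdorff space $O$.

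Second, I would choose an open neighbourhood $U_0'$ of $A$ with $U_0'\subset O\cap U$ and $\overline{U_0'}\subset O$ such that for every $x\in U_0'$ the Euclidean line segment $[x,\rho(x)]$ is contained in $W$, and then shrink to a smaller open neighbourhood $U_0$ of $A$ with $\overline{U_0}\subset U_0'$. By Urysohn's lemma in the metric space $X$, pick a continuous cutoff $\lambda\colon X\to[0,1]$ with $\lambda\equiv 1$ on $\overline{U_0}$ and $\lambda\equiv 0$ outside $U_0'$. Define
\[
 r_0(x) = \begin{cases} R\bigl((1-\lambda(x))\,x+\lambda(x)\,\rho(x)\bigr), & x\in U_0',\\ x, & x\notin U_0', \end{cases}
\]
together with a homotopy $H_t$ given by the same formula with $\lambda(x)$ replaced by $t\lambda(x)$ on $U_0'$ and $H_t(x)=x$ off $U_0'$. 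Continuity of $r_0$ and $H$ is clear because $\lambda$ vanishes near $\partial U_0'$. The required properties now follow mechanically: for $a\in A$ one has $\rho(a)=a$ and $R(a)=a$, hence $r_0|_A=\mathrm{id}_A$; for $x\in U_0$ one has $\lambda(x)=1$, hence $r_0(x)=R(\rho(x))=\rho(x)\in A$, giving $r_0(U_0)\subset A$; and $H_0=\mathrm{id}_X$, $H_1=r_0$, with $H_t(A)\subset A$ for all $t$, which is the desired pair-map homotopy $\mathrm{id}_{(X,A)}\simeq r_0\circ i_0$.

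The main obstacle I expect is the existence of $U_0'$ with the segment condition $[x,\rho(x)]\subset W$, since this is precisely the step where the ENR structures on both $A$ and $X$ are jointly used. To produce it I argue pointwise: for each $a\in A$ the trivial segment $\{a\}$ lies in $W$, and by joint continuity of the map $(x,t)\mapsto (1-t)x+t\rho(x)$ on $O\times[0,1]$ together with compactness of $[0,1]$ and openness of $W$, there is an open neighbourhood $N_a$ of $a$ in $O$, with closure in $X$ also contained in $O$, on which the whole family of segments remains in $W$. Taking a locally finite refinement of $\{N_a\}_{a\in A}$ and intersecting with $U$ yields $U_0'$, and standard separation in the metric space $X$ then produces $U_0$ with $\overline{U_0}\subset U_0'$.
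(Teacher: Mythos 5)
Your argument is correct and shares the paper's core geometric idea: embed everything in Euclidean space using the ENR structures, and obtain the homotopy by pushing $x$ along the straight segment from $x$ to the local retraction value $\rho(x)$ and retracting back into $X$. Where you genuinely diverge from the paper is in how this local homotopy is globalized. The paper defines $H(x,t)=\rho[(1-t)\iota(x)+t\iota r(x)]$ only on a neighbourhood of $A$ on which the segments stay in the ambient open set, and then invokes Dowker's homotopy extension theorem to extend from $X\times\{0\}\cup(\text{nbhd})\times[0,1]$ to all of $X\times[0,1]$, setting $r_0:=\bar H(\cdot,1)$. You instead introduce a Urysohn cutoff $\lambda$ and write down an explicit global formula that damps the straight-line homotopy to the identity outside $U_0'$, avoiding the homotopy extension theorem entirely at the cost of the separation bookkeeping $\overline{U_0}\subset U_0'\subset\overline{U_0'}\subset O$. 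Both routes succeed; yours is more elementary and explicit. Two small corrections to your write-up: first, continuity of $r_0$ and $H$ at a point $x_0\in\partial U_0'$ is not because $\lambda$ ``vanishes near'' $\partial U_0'$ --- $\lambda$ need only vanish on $\partial U_0'$ and may well be positive arbitrarily close to it from inside $U_0'$ --- but because $\overline{U_0'}\subset O$ guarantees $\rho(x)\to\rho(x_0)$ while $\lambda(x)\to 0$, so that $(1-\lambda(x))x+\lambda(x)\rho(x)\to x_0$ and $R$ is continuous there. Second, the pointwise-plus-locally-finite-refinement argument for producing $U_0'$ is more elaborate than necessary: pre-shrink $O$ to an open $O'$ with $A\subset O'\subset\overline{O'}\subset O$ (this uses that $A$ is closed in $X$, an assumption the paper's step ``$\bar A\subset W$'' is also silently making) and then take $U_0'$ to be the set of $x\in O'\cap U$ such that $[x,\rho(x)]\subset W$, which is open by compactness of $[0,1]$ and joint continuity.
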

\begin{proof}
The idea of the proof is to first use a retraction which is given by the fact that $A$ is an ENR, then use the fact that $X$ is an ENR to find a homotopy between the given retraction and the identity on a neighbourhood of $A$. This homotopy can then be extended to the whole of $X$ using a theorem of Dowker.\\
Since $A$ is an ENR, there exists an open neighbourhood $V$ of $A$ in $X$ and a map $r\colon V\rightarrow A$, which is a retraction, so \begin{equation}
 r\circ i_ {(A\hookrightarrow V)}  =id_A.\label{rid} 
\end{equation}
The fact that $X$ is an ENR makes it now possible to find a smaller neighbourhood $W$ of $A$ in $V$ and a homotopy from $r$ to the identity on that neighbourhood $W$ in the following way:
Let $X\stackrel{\iota}{\rightarrow} O\stackrel{\rho}{\rightarrow} X$ be such that $O\subset \mathbb R^n$ is open and $\rho\circ\iota=id_X$, which exists since $X$ is an ENR. Now choose $W\subset V$ to be the set of all points $x\in V$ such that the whole segment from $\iota r(x)$ to $\iota(x)$ lies in $O$ and define a homotopy \begin{align*}H\colon W\times [0,1]&\rightarrow X \text{ by } \\ H(x,t)&=\rho[(1-t)\iota(x)+t\iota r(x)].\end{align*}
 The retraction $r$ was chosen in a way such that $\iota r(x)=\iota(x)$ for all $x\in A$, compare (\ref{rid}) above. So it follows that $A\subset W$. Since $r$ is continuous and $A\subset X$ can be thought of as lying in $\mathbb R^n$ it is also true that $r$ fixes elements of the (topological) boundary of $A$, which means that $\bar{A}\subset W$ as well. This can be seen by letting $a$ be an element of the boundary of $A$, taking a sequence $a_m$ of elements in $A$ that converges to $a$ and computing $r(a)=r(\lim\limits_{m\to\infty} a_m)=\lim\limits_{m\to\infty}r(a_m)=\lim\limits_{m\to\infty}a_m=a.$\\
What was constructed up to now is a map
\begin{align*}
H\colon X\times \left\{0\right\}\cup \bar{A}\times [0,1]&\rightarrow X \text{ with }\\
H(x,0) &=x \text{ for all }x\in X\\
H(x,t)&=x \text{ for all }x\in \bar{A} \text{ and } t\in [0,1]\\
H(x,1)&=r(x) \text{ for all } x\in \bar{A},
\end{align*} 
which can be extended to the set $X\times\left\{0\right\}\cup W\times [0,1]$. Dowker's Theorem 3 in \cite{MR0073982} states that in such cases, the homotopy can be extended to the whole of $X$. If we call this extension $\bar{H}\colon X\times [0,1]\rightarrow X$ than the  first part of the lemma can be proven by simply defining $U_0:=W$ and $r_0(x):=\bar{H}(x,1)$ for all $x\in X$. The set $W$ was chosen above to be a neighbourhood contained in $V$. The retraction $r\colon V\rightarrow A$ would as well work with any other neighbourhood contained in $V$. This is the reason why $U_0$ can be chosen such that it would be contained in any given neighbourhood $U$ of $A$. This proves the lemma.
\end{proof}

\autoref{ret} is a crucial ingredient in the following proof. What makes it so valuable is that it provides pair versions of retractions and we examine mostly maps of pairs in connection with relative bordism or \v{C}ech bordism groups.

\begin{prop}\label{prop:singcech}
 Let $(X,A)$ be such that $X$ is an ENR and $A\subset X$ is an ENR as well. Then \[\check{\Omega}(X,A)\simeq {\Omega}(X,A) \text{ via the map }\]
\begin{align*}
               j:\Omega(X,A)&\rightarrow \check{\Omega}(X,A)\text{ defined by }\\
[B^n,\partial B^n,f]&\mapsto\left\{i^U_\ast\left(\left[B^n,\partial B^n,f\right]\right)\right\}_U\in \check{\Omega}(X,A)\subset \prod_{U\supset A}\Omega(X,U),
              \end{align*}
 where $i^U\colon (X,A)\rightarrow (X,U)$ denotes the inclusion and $i^U_\ast$ is induced by it, i.e.\[
i^U_\ast\left(\left[B^n,\partial B^n,f\right]\right)=\left[B^n,\partial B^n,i^U\circ f\right].
\]
This is a natural transformation of functors.
\end{prop}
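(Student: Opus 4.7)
The plan is to verify that $j$ is a well-defined homomorphism, is natural, and is both injective and surjective; the main technical input for the last two will be \autoref{ret} and its attendant retraction/homotopy data.

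Well-definedness and naturality are formal. For $U'\subset U$ with $i^U_{U'}\colon(X,U')\to(X,U)$ the inclusion, $i^U = i^U_{U'}\circ i^{U'}$, so $i^U_\ast = (i^U_{U'})_\ast\circ i^{U'}_\ast$, which is precisely the compatibility needed for $\{i^U_\ast[B,\partial B,f]\}_U$ to define an element of the inverse limit. Additivity under disjoint union shows $j$ is a homomorphism. For naturality, given $g\colon(X,A)\to(Y,B)$, the composites $\{i^V_\ast\circ g_\ast\}_V$ coincide with $\{g_\ast\circ i^{g^{-1}(V)}_\ast\}_V$ after identifying via $\Omega(g)$, and the universal property of the inverse limit used in the construction of $\check\Omega(g)$ forces $\check\Omega(g)\circ j_{(X,A)} = j_{(Y,B)}\circ g_\ast$.

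For injectivity, suppose $j[B,\partial B,f]=0$. In particular, for the neighbourhood $U_0$ of $A$ and the map $r_0\colon X\to X$ from \autoref{ret} (applied e.g.\ with ambient $U=X$), we have $[B,\partial B, i^{U_0}\circ f]=0$ in $\Omega(X,U_0)$, witnessed by some bordism $(W,\partial W;F)$ in $(X,U_0)$. Since $r_0(U_0)\subset A$ and $r_0$ is defined on all of $X$, the triple $(W,\partial W; r_0\circ F)$ is a bordism in $(X,A)$, giving $[B,\partial B, r_0\circ f]=0$ in $\Omega(X,A)$. The homotopy $\bar H\colon X\times[0,1]\to X$ from the proof of \autoref{ret} fixes $A$ pointwise, so $\bar H\circ(f\times\operatorname{id})$ is a pair homotopy $(B,\partial B)\to(X,A)$ from $f$ to $r_0\circ f$, yielding $[B,\partial B,f]=[B,\partial B,r_0\circ f]=0$.

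For surjectivity, given $\xi=\{\xi_U\}_U\in\check\Omega(X,A)$, fix $U_0$ and $r_0$ as above and set $\alpha:=r_{0\ast}(\xi_{U_0})\in\Omega(X,A)$; I claim $j(\alpha)=\xi$. For an arbitrary neighbourhood $U$ of $A$, choose an auxiliary neighbourhood $U'\subset U\cap U_0$ of $A$ small enough that $\bar H(U'\times[0,1])\subset U$; this is possible because $\bar H$ is continuous and $\bar H|_{A\times[0,1]}\equiv\mathrm{id}_A\subset U$, so a standard tube-lemma argument shrinks the domain. On the inclusion $\iota'\colon(X,U')\to(X,U_0)$ and $\iota\colon(X,U')\to(X,U)$ we then have $\iota\simeq i^U\circ r_0\circ\iota'$ as pair maps $(X,U')\to(X,U)$, with homotopy furnished by $\bar H$. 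Using inverse-limit compatibility $\xi_{U_0}=\iota'_\ast\xi_{U'}$ and $\xi_U=\iota_\ast\xi_{U'}$, one computes
\[
i^U_\ast(\alpha)=i^U_\ast r_{0\ast}\iota'_\ast(\xi_{U'})=(i^U\circ r_0\circ\iota')_\ast(\xi_{U'})=\iota_\ast(\xi_{U'})=\xi_U.
\]

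The main obstacle is this last step: \autoref{ret} only supplies the homotopy $\bar H$ as a map into $X$, not into a prescribed neighbourhood, so naively $i^U\circ r_0$ and $\iota$ are not visibly homotopic as maps of pairs. The crucial observation is that since $\bar H$ fixes $A$ pointwise and lands in $X$, shrinking the source neighbourhood to $U'$ forces the homotopy to remain in $U$; the cofinality of such $U'$ in the inverse system then lets this local comparison pin down every coordinate of $\xi$.
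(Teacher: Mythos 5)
Your proof is correct and takes essentially the same approach as the paper: both hinge on \autoref{ret} and the homotopy invariance of bordism. The paper constructs an explicit two-sided inverse $\rho\colon\{\,[B^n_U,\partial B^n_U,f_U]\,\}_U\mapsto[B^n_{U_0},\partial B^n_{U_0},r_0\circ f_{U_0}]$ and verifies $\rho\circ j=\mathrm{id}$ and $j\circ\rho=\mathrm{id}$, whereas you prove injectivity and surjectivity directly; the content of your injectivity step is the paper's $\rho\circ j=\mathrm{id}$, and your surjectivity step is its $j\circ\rho=\mathrm{id}$. One small point where you are actually more careful than the paper: when showing $i^U_\ast(\alpha)=\xi_U$, you explicitly shrink to a neighbourhood $U'$ via the tube lemma so that $\bar H(U'\times[0,1])\subset U$, guaranteeing that $\bar H$ really is a homotopy of pairs $(X,U')\to(X,U)$; the paper merely chooses $V\subset U\cap U_0$ and asserts the homotopy ``can be thought of'' as a pair homotopy without verifying that $\bar H(V\times[0,1])\subset U$. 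That extra shrinking is needed, and your argument supplies it.
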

\begin{proof}
To prove the isomorphism claimed above, the inverse mapping to $j$ is to be constructed and this is where \autoref{ret} is going to be used. Define
\begin{align*}
 \rho\colon\check{\Omega}(X,A)&\rightarrow \Omega(X,A)\text{ by }\\
\left\{\left[B^n_U,\partial B^n_U, f_U\right]\right\}_U&\mapsto \left[B^n_{U_0},\partial B^n_{U_0}, r_0\circ f_{U_0}\right],
\end{align*}
where $r_0\colon (X,U_0)\rightarrow (X,A)$ is defined as in \autoref{ret}, i.e. $r_0\circ i_{(A\hookrightarrow V)}=id_A$ and $(X,A)\stackrel{i_0}{\hookrightarrow}(X,U_0)\stackrel{r_0}{\rightarrow}(X,A)$ is homotopic to the identity. 
This does not depend on the choice of the neighbourhood $U_0$ and the map $r_0$ because of the special form elements of the limit group $\check{\Omega}(X,A)$ have and due to the fact that the homotopy of \autoref{ret} can be used as a homotopy here as well. Let 
\[
 r_i\colon (X,U_i)\rightarrow (X,A) \text{ for } i=0, 1
\] be two different choices of maps and neighbourhoods with the property stated in \autoref{ret}. For an element of the limit group $\check{\Omega}(X,A)$ we know\footnote{Compare Proposition VIII.5.7 in \cite{MR1335915}.} that for all neighbourhoods $U^\prime\subset U$
\begin{align}\label{limitprop}
\left[B^n_{U^{\prime}},\partial B^n_{U^{\prime}}, i_{(X,U^{\prime})\hookrightarrow (X,U)} \circ f_{U^{\prime}}\right]=\left[B^n_U, \partial B^n_U,f_U\right]. 
\end{align} This property will be used in the following for the open neighbourhood $U_0\cap U_1\subset U_0$ and $U_0\cap U_1\subset U_1$. Compute
\begin{align*}
 \left[B^n_{U_0},\partial B^n_{U_0},r_0\circ f_{U_0}\right]&=\left[B^n_{U_0\cap U_1},\partial B^n_{U_0\cap U_1},r_0\circ i_{(X,U_0\cap U_1)\hookrightarrow(X,U_0)} \circ f_{U_0\cap U_1}\right]\\
&=\left[B^n_{U_0\cap U_1},\partial B^n_{U_0\cap U_1},r_1\circ i_{(X,U_0\cap U_1)\hookrightarrow(X,U_1)} \circ f_{U_0\cap U_1}\right]\\
&=\left[B^n_{U_1},\partial B^n_{U_1},r_1\circ f_{U_1}\right],
\end{align*} where the first and the last equation follow from the limit property (\ref{limitprop}) as noted above. The second equation holds due to the fact that $r_0\circ i_{(X,U_0\cap U_1)\hookrightarrow(X,U_0)}$ and $r_1\circ i_{(X,U_0\cap U_1)\hookrightarrow(X,U_1)}$ are both homotopic to the identity via the homotopy given in \autoref{ret} and thus are homotopic to each other. \footnote{Here and in the sequel we implicitly use the fact that the homotopy axiom is valid for bordism several times.} This shows that the map $\rho$ is well-defined.\\
We can see that $\rho\circ j=id$ as follows.
\begin{align*}
\rho j\left(\left[B^n,\partial B^n, f\right]\right)&=\rho\left(\left\{\left[B^n,\partial B^n,i^U\circ f\right]\right\}_U\right) \\
&=[B^n,\partial B^n,r_0\circ i^{U_0}\circ f]\\
&=\left[B^n,\partial B^n,f\right],
\end{align*}
where the last equation follows from the fact that $r_0 \circ i^{U_0}$ is homotopic to the identity on $(X,A)$. If this homotopy is called $H$, the bordism between $[B^n,\partial B^n,r_0\circ i^{U_0}\circ f]$ and $\left[B^n,\partial B^n,f\right]$ is given by 
\begin{align*}
\tilde{H}\colon B^n\times [0,1]&\rightarrow X\\
(x,t)&\mapsto \tilde{H}(x,t):=H(f(x),t).
\end{align*}
What is left to prove is that $j\rho=id$ as well. Compute
\begin{align*}
j\rho\left(\left\{\left[B^n_U,\partial B^n_U, f_U\right]\right\}_U\right)&=j\left(\left[B^n_{U_0},\partial B^n_{U_0},r_0\circ f_{U_0}\right]\right)\\ 
&=\left\{\left[B^n_{U_0},\partial B^n_{U_0}, i^U \circ r_0\circ f_{U_0}\right]\right\}_U
\end{align*}
The composition $j\rho$ is equal to the identity if for all neighbourhoods $U$ of $A$ the following holds:
\[
\left[B^n_U,\partial B^n_U,f_U\right]=\left[B^n_{U_0},\partial B^n_{U_0}, i^U \circ r_0\circ f_{U_0}\right].
\]
To see this, first fix a neighbourhood $U$ of $A$ and then choose a neighbourhood $V$ of $A$ which lies inside $U\cap U_0$. Property (\ref{limitprop}) is needed again to establish the above equation. Furthermore, the above homotopy is considered again:\\
The homotopy $H$ mentioned before not only gives a homotopy between the identity on $(X,A)$ and $r_0 \circ i^{U_0}$, but also can be thought of as a homotopy between the inclusion $i_{(X,V)\hookrightarrow (X,U)}\colon(X,V)\rightarrow(X,U)$ and the composition $i^U\circ(r_0{|_V})\colon(X,V)\hookrightarrow (X,U_0)\stackrel{r_0}{\rightarrow}(X,A)\stackrel{i^U}{\rightarrow}(X,U)$ because they agree as maps defined just on $X$ respectively.\\
This enables us to compute
\begin{align*}
 \left[B^n_U,\partial B^n_U,f_U\right]
&=\left[B^n_V,\partial B^n_V,i_{(X,V)\hookrightarrow (X,U)}\circ f_V\right] \text{, which follows from }V\subset U \text{ together}\\
&\text{ with the limit property }\\
&=\left[B^n_V,\partial B^n_V,i^U\circ r_0\circ i_{(X,V)\hookrightarrow (X,U_0)}\circ f_V\right]\text{, using the homotopy just }\\ &\text{ explained }\\
&=\left[B^n_{U_0},\partial B^n_{U_0},i^U\circ r_0\circ f_{U_0}\right] \text{, by the limit property using }V\subset U_0.
\end{align*}
Altogether we have that $\rho j=id$ and $j \rho=id$, which proves the isomorphism.\\
This transformation is natural because of the commutativity of the following diagram for all continuous maps $\varphi\colon(X,A)\rightarrow(Y,B)$ between topological spaces.
\[
\xymatrix{
\Omega(X,A)\ar[r]^{\Omega(\varphi)}\ar[d]_{j}&\Omega(Y,B)\ar[d]^j\\
\check{\Omega}(X,A)\ar[r]_{\check{\Omega}(\varphi)}&\check{\Omega}(Y,B)
}
\]
Commutativity of this diagram can be seen via this straightforward computation, which follows directly from the definitions of the maps involved.
\begin{align*}
 (j\circ\Omega(\varphi))\left[B^n,\partial B^n,f\right]&=j(\left[B^n,\partial B^n,\varphi\circ f\right])\\
&=\left\{i^U_\ast\left(\left[B^n,\partial B^n,\varphi\circ f\right]\right)\right\}_U\\
&=\check{\Omega}(\varphi)\left\{(i^{U}_{\ast}\left[B ,\partial B^n,f \right])\right\}_U\\
&=(\check{\Omega}(\varphi)\circ j)(\left[B ,\partial B^n,f \right])
\end{align*}
This proofs the proposition.
\end{proof}

\begin{rem}
 The analogous result holds in homology, namely for topological pairs $(X,A)$ such that $X$ and $A$ are Euclidean Neighbourhood Retracts, singular homology is isomorphic to \v{C}ech-homology. The preceding proof is based on a proof for this analogous result that can be found in \cite{MR1335915} in Section VIII.13. 
\end{rem}


\newpage~
\thispagestyle{empty}	
\chapter{About manifolds, diagonals and bordism}

The second chapter is devoted to the lifting of the symmetric squaring construction to a well-defined map in unoriented and oriented bordism. Specifically, this means that we have to deal a lot with manifolds, since they are the major objects in bordism. To overcome technical difficulties concerning manifolds with boundary in particular, the first section of this chapter provides a toolkit for manifolds which will be used repeatedly in the following.\\
Two of the main results of this thesis are contained in the second section of this chapter. It is shown that there exist well-defined symmetric squaring maps in unoriented (compare \autoref{halfun}) and oriented bordism (compare \autoref{halfor}). As a preparation for these results, we take a close look on the choice of neighbourhoods of the diagonal in squared manifolds.\\
As we then will have provided ways of interpreting the symmetric squaring map in homology and in bordism, we can also give a passage between those worlds. There is a canonical way of mapping from bordism to homology called the fundamental class transformation.\footnote{Compare \cite{MR548463} Section 6.} We prove in \autoref{compat} that the symmetric squaring maps defined in homology and bordism are compatible with this canonical passage.

\section{Toolkit for manifolds}

Common technical issues concerning smooth manifolds have to do with nonempty boundaries, with corners, that have to be smoothened or with the question whether subspaces of the manifold inherit a smooth structure and then are smooth manifolds again themselves.\\
Dealing with bordism will always have to do with manifolds with nonempty boundary. Furthermore, the squaring performed during the process of symmetric squaring produces manifolds with some sort of corners as soon as it is done to manifolds with nonempty boundary. So two of the named issues occur naturally in our studies. The third issue comes into play as soon as we consider squared manifolds reduced by a neighbourhood of the diagonal. However, these are the natural objects to look at in connection with \v{C}ech bordism as was noted earlier.\\
The toolkit we want to give here is chosen to provide the necessary technical knowledge to overcome exactly these issues.

\subsection{Double of a manifold}\label{subsec:double}
Almost all manifolds occurring here are manifolds with nonempty boundary. This sometimes leads to technical problems near the boundary. A useful concept to avoid these is that of the double of a manifold. The next paragraph refers to Section I.5 in \cite{MR0198479}.

\begin{defi}\label{doubledef}
 Let $N$ be a smooth $n$-dimensional manifold with nonempty boundary. The double of $N$ is the union of $N_0:=N\times 0$ and $N_1=N\times 1$, with $(x,0)$ and $(x,1)$ identified whenever $x$ is in $\partial N$. It is denoted by $D(N)$.
\end{defi}
Since in the definition of the double of a manifold, a manifold is attached to a copy of itself along the boundary, the resulting object does not have a boundary anymore, but does not have any more or less information than the single manifold had before. It can be shown that the resulting double is a smooth manifold itself.
\begin{lem}\label{double}
The double $D(N)$ can be equipped with a differentiable structure.
\end{lem}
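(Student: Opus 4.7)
The plan is to build the smooth structure on $D(N)$ by combining the pre-existing smooth structures on the two copies $N_0$ and $N_1$ with a compatible smooth structure on a neighbourhood of the glued boundary, obtained via a collar. Concretely, I would first observe that on the open sets $N_0 \setminus (\partial N \times \{0\})$ and $N_1 \setminus (\partial N \times \{1\})$ the inclusion into $D(N)$ is a bijection onto an open subset, and these subsets inherit the smooth atlas from $N$ directly, since $N$ minus its boundary is a smooth $n$-manifold without boundary. The only point at issue is therefore the neighbourhood of the image $\partial N \hookrightarrow D(N)$ of the identified boundary.

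To handle this, I would invoke the collar neighbourhood theorem: there exists an open neighbourhood $V \subset N$ of $\partial N$ together with a diffeomorphism $c \colon \partial N \times [0,1) \to V$ such that $c(x,0) = x$ for all $x \in \partial N$. Pick such a collar and let $c_0, c_1$ denote the corresponding collars in the two copies $N_0, N_1$. I would then define a map
\[
\Phi \colon \partial N \times (-1,1) \longrightarrow D(N)
\]
by $\Phi(x,t) = c_0(x,-t)$ for $t \le 0$ and $\Phi(x,t) = c_1(x,t)$ for $t \ge 0$, which is consistent at $t=0$ because both expressions produce the point of $\partial N$ identified in $D(N)$. The image of $\Phi$ is an open neighbourhood of the glued boundary, and $\Phi$ is a bijection onto this image. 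Composing $\Phi$ with charts on the boundaryless $(n-1)$-manifold $\partial N$ yields a family of charts on $D(N)$ around boundary points; by construction each such chart is modelled on an open subset of $\mathbb R^{n-1} \times (-1,1)$, so its domain looks like an open subset of $\mathbb R^n$, exactly as a smooth manifold without boundary requires.

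The main step is then verifying that these boundary charts are smoothly compatible both with one another and with the interior charts inherited from $N_0$ and $N_1$. For overlap among boundary charts this reduces to smoothness of transition functions for $\partial N$, which is immediate. For overlap with an interior chart supported in, say, $N_1$, the transition function is, up to the interior chart's local diffeomorphism, a composition of $c_1$ and a chart on $\partial N$ restricted to $t > 0$, which is smooth because $c_1$ is a diffeomorphism of smooth manifolds with boundary. The same argument applies symmetrically for $N_0$ with $t < 0$.

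I expect the only subtlety to be the choice involved in the collar and an honest check that different collars yield the same smooth structure up to diffeomorphism; this follows from the uniqueness (up to isotopy) of collar neighbourhoods, as proved in \cite{MR0198479}, so any two resulting atlases on $D(N)$ are diffeomorphic rel $\partial N$. One could in principle avoid this uniqueness question by defining the smooth structure on $D(N)$ only up to diffeomorphism, which is all that is needed for the bordism-theoretic use later. In summary: interior charts from $N_0 \sqcup N_1$ plus collar-induced charts across $\partial N$ assemble into a smooth atlas, and the hardest point is the routine but slightly tedious verification of smoothness of the transition functions across the glued boundary.
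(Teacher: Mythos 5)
Your proposal follows essentially the same route as the paper: the paper also equips $D(N)$ with charts from $N_0$ and $N_1$ away from the glued boundary, uses collars $\kappa_0,\kappa_1$ of $\partial N_0,\partial N_1$ to produce a homeomorphism $\kappa$ onto $\partial N\times(-1,1)$ (your $\Phi$ is its inverse), and composes with charts of $\partial N$ to obtain charts near the glued boundary. Your extra care in spelling out the transition-function checks and the remark on collar uniqueness are both sound and are also acknowledged in the paper (the latter in a follow-up remark citing Munkres' uniqueness theorem).
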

\begin{proof}
 Let $\kappa_0\colon U_0\rightarrow\partial N_0\times\left[0,1\right)$ and $\kappa_1\colon U_1\rightarrow\partial N_1\times\left(-1,0\right]$ be collars of $\partial N_0$ and $\partial N_1$ respectively, i.e. diffeomorphisms of neighbourhoods $U_i$ of the boundaries $\partial N_i$ in $N_i$, which identify $\partial N_i$ with $\partial N_i\times 0$ for $i=0,1$.\footnote{The existence of collars is proven in the so-called Collaring Theorem, compare for example Theorem (13.6) in \cite{MR674117}. We will make constant use of this from now on.\label{footnote:collar}} Let $U$ be the union of $U_0$ and $U_1$ in $D(N)$ and let $\kappa\colon U\rightarrow \partial N\times \left(-1,1\right)$ be the homeomorphism induced by $\kappa_o$ and $\kappa_1$. We get a well-defined  smooth structure on $D(N)$ if we require $\kappa$ to be a diffeomorphism and the inclusions of $N_0$ and $N_1$ in $D(N)$ to be smooth imbeddings.\\
The second condition allows us to use the original charts of the smooth manifold $N$ for $D(N)$ in an appropriate distance from the identified points $\partial N_i$ with $i=0,1$, whereas the first condition can be used to produce charts for $D(N)$ around these identified points using boundary charts of $N$. Namely, let $p\in \partial N$ and $\varphi_\partial:V\rightarrow \mathbb R^{n-1}$ be a chart of $\partial N$ around $p$. Then a chart for $D(N)$ around $p$ is given by
\[
\Phi\colon\kappa^{-1}\left(V\times\left(-1,1\right)\right)\xrightarrow{\kappa}V\times\left(-1,1\right)\xrightarrow{\varphi_\partial\times id}\mathbb{R}^n.
\]
This structure is also built in a way such that restricting to $\kappa_i$ above gives back ordinary boundary charts of the manifold $N$.
\end{proof}
Orientations were not involved in the preceding definition and lemma. It is good to know, however, that the doubling construction works with oriented manifold as well. Furthermore, the result of doubling a manifold is unique up to diffeomorphisms.
\begin{rem}[orientation, uniqueness]\label{rem:colluni}
\begin{itemize}
 \item[]
 \item Oriented manifolds can be doubled in an analogous manner, the only difference being that in \autoref{doubledef} $N_0$ is taken to be $N_0= -N \times 0$, where $-N$ denotes the manifold $N$ with the opposite orientation as $N$. In \autoref{double} the collars then have to be chosen to be orientation-preserving.
 \item The uniqueness of this structure up to diffeomorphism is shown in \cite{MR0198479} in Theorem I.6.3.
\end{itemize} 
\end{rem}

\subsection{Straightening the angle}

The first step of the symmetric squaring procedure always involves the squaring of a space. To perform this with a smooth manifold $W$ with nonempty boundary can cause difficulties since the parts $\partial  W\times \partial W$ of the boundary of $W\times W$ do not carry the structure of a smooth boundary in a canonical way. The usual way out is a construction called straightening the angle,  which is described in \cite{MR548463}, \cite{MR674117} or \cite{Mil59}. The first of these contains the following description.\\
Let $\mathbb R_+\subset \mathbb R$ denote the set of all non-negative real numbers. Pick a homeomorphism
\[
\alpha\colon\mathbb R_+\times \mathbb R_+\rightarrow \mathbb R\times \mathbb R_+,
\]which is a diffeomorphism of $\mathbb R_+\times \mathbb R_+\smallsetminus (0,0)$ onto $\mathbb R\times \mathbb R_+\smallsetminus (0,0)$. Take for example $\alpha(r,\theta)=(r,2\theta)$ for $0\leq\theta\leq\pi/2 $ in polar coordinates.
\begin{lem}\label{straight}
 Let $W$ be an $n$-dimensional topological manifold and $M\subset W$ an $(n-2)$-dimensional submanifold without boundary, closed in $W$, such that
\begin{itemize}
 \item $W\smallsetminus M$ has a differentiable structure.
 \item $M$ has a differentiable structure.
 \item There is an open neighbourhood $U$ of $M$ in $W$ and a homeomorphism $\Phi$ of $U$ onto $M\times \mathbb R_+\times \mathbb R_+$ with $\Phi(x)=(x,0,0)$ for all $x\in M$ and where $\Phi$ is a diffeomorphism from $U\smallsetminus M$ onto $M\times \mathbb R_+\times \mathbb R_+\smallsetminus M\times 0\times 0$.
\end{itemize}
Then there exists a differentiable structure on $W$ that induces differentiable structures on $W\smallsetminus M$ and $U$. In general, this can be used to introduce a differentiable structure on manifolds that arise from glueing two differentiable manifolds together. 
\end{lem}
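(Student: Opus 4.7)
The plan is to transfer a smooth structure from the model space $M \times \mathbb{R} \times \mathbb{R}_+$ onto $U$ by declaring
\[
\Psi := (\mathrm{id}_M \times \alpha) \circ \Phi \colon U \longrightarrow M \times \mathbb{R} \times \mathbb{R}_+
\]
to be a diffeomorphism, and then to glue the resulting structure on $U$ with the given one on $W \smallsetminus M$. Since $M$ carries a smooth structure by hypothesis and $\mathbb{R} \times \mathbb{R}_+$ is a standard half-space, the product $M \times \mathbb{R} \times \mathbb{R}_+$ has a canonical smooth manifold-with-boundary structure. Pulling back its charts through the homeomorphism $\Psi$ yields a smoothly compatible atlas on the open set $U \subset W$.

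Next I would verify that this new structure on $U$ agrees, on the overlap $U \smallsetminus M$, with the existing smooth structure on $W \smallsetminus M$. The third hypothesis says that $\Phi$ restricts to a diffeomorphism $U \smallsetminus M \to M \times \mathbb{R}_+ \times \mathbb{R}_+ \smallsetminus M \times \{(0,0)\}$ with respect to the prescribed smooth structure on $W \smallsetminus M$. By construction, $\alpha$ is a diffeomorphism on $\mathbb{R}_+ \times \mathbb{R}_+ \smallsetminus \{(0,0)\}$, so $\mathrm{id}_M \times \alpha$ is a diffeomorphism on the complement of $M \times \{(0,0)\}$. Composing, $\Psi$ is a diffeomorphism on $U \smallsetminus M$, and consequently the smooth atlas pulled back via $\Psi$ is smoothly compatible with the given one on $U \smallsetminus M$.

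With this overlap compatibility in hand, I would take as atlas on $W$ the union of the given atlas on $W \smallsetminus M$ and the $\Psi$-pullback atlas on $U$. Both atlases are internally smooth, and the compatibility just established guarantees that all transition functions between charts from the two families are smooth on the overlap. Hence the union is a smooth atlas on $W$, and the induced differentiable structure restricts to the prescribed ones on $W \smallsetminus M$ and on $U$ by construction.

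The only real obstacle, and the reason the statement is nontrivial, is that $\alpha$ is deliberately not a diffeomorphism at the corner $(0,0)$: it is a genuine homeomorphism there which fails to be smooth. The construction succeeds because this failure is concentrated exactly along $M$, where no prior smooth structure on $W$ was imposed; thus the $\Psi$-induced smoothness across $M$ does not clash with anything. In applications where $W$ arises by gluing two smooth manifolds-with-corners along a common codimension-two stratum $M$, the hypotheses can be arranged using a product collar near $M$ (cf.\ \autoref{footnote:collar}), and the lemma then supplies a global smooth structure on the glued space.
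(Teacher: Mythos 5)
Your proposal is correct and follows essentially the same route as the paper: the map you call $\Psi = (\mathrm{id}_M\times\alpha)\circ\Phi$ is precisely the paper's $\alpha'\circ\Phi$, and both arguments declare it a diffeomorphism to pull back the product smooth structure on $M\times\mathbb R\times\mathbb R_+$, then check compatibility with the given structure on $W\smallsetminus M$ over the overlap $U\smallsetminus M$ before taking the union of atlases. Your added remark pinpointing that $\alpha$ fails to be smooth only at $(0,0)$ (hence only along $M$, where no prior structure existed) is a nice explicit articulation of what the paper leaves implicit.
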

\begin{proof}
 Let 
\[
\alpha^\prime\colon M\times \mathbb R_+\times \mathbb R_+\rightarrow M \times \mathbb R\times \mathbb R_+
\]be given by $\alpha^\prime(x,y,z)=(x,\alpha(y,z))$. Then the composition $\alpha^\prime\circ\Phi\colon U\rightarrow M\times \mathbb R\times \mathbb R_+$ is a homeomorphism and there is a product differentiable structure on $U$ such that $\alpha^\prime\circ\Phi$ is a diffeomorphism.\footnote{This is achieved with the help of collars, for details see the appendix in\cite{Mil59}.} Hence $U$ and $W\smallsetminus M$ have differentiable structures that coincide on their intersection, so there is a differentiable structure on $W$ which induces the differentiable structures of $U$ and $W\smallsetminus M$.
\end{proof}
The most common case where this is used is the one already mentioned above, namely the product of two smooth manifolds with nonempty boundaries. If $W_1\times W_2$ is of that kind, then $W_1\times W_2\smallsetminus \partial W_1\times \partial W_2$ has a natural differentiable structure that comes from the smooth structures of $W_1$ and $W_2$. But the part $\partial W_1\times \partial W_2$ does not because the charts coming as a cross product from the boundary charts of $W_1$ and $W_2$ do not map into a halfplane $\mathbb R\times \mathbb R_+$ but a into a 'quarterplane' $\mathbb R_+\times \mathbb R_+$. The idea of straightening the angle is to take the diffeomorphism $\alpha$ from the 'quarterplane' to the halfplane and compose it with the crossed charts.\\ By the Collaring Theorem\footnote{Compare for example Theorem (13.6) in \cite{MR674117}.} there exists a homeomorphism from a neighbourhood of $\partial W_1\times\partial W_2$ in $W_1\times W_2$ onto $\partial W_1\times\partial W_2\times \mathbb R_+\times \mathbb R_+$ with the properties in the lemma. So $W_1\times W_2$ can be given a differentiable structure and from now on a manifold $W_1\times W_2$ will always be assumed to have all angles straightened by this construction if necessary.

\subsection{A Riemannian tool}

On every smooth manifold there can be introduced a Riemannian metric using a partition of unity. A Riemannian metric on a smooth manifold is a family of positive definite inner products on the tangential spaces of the manifold depending smoothly on the basepoint of the tangential spaces. This concept is used to define a notion of distance on smooth manifolds. In this section, the squared distance function assigning to points of a manifolds their squared distance from a submanifold will be shown to be a smooth function. In the case where the diagonal is regarded as a submanifold of the square of a manifold, this function will be used later on to find neighbourhoods of the diagonal such that the complement is a smooth manifold with boundary again. Further information on Riemannian metrics and details for the following section can be found in \cite{MR1468735} in Chapter 3.

\begin{lem}\label{8-5}\footnote{This is Exercise 8-5 in \cite{MR1468735}.}
Let $M\subset\tilde{M}$ be a compact, embedded Riemannian submanifold. For $\epsilon>0$ let $N_\epsilon$ denote the subset $\left\{V\colon |V|<\epsilon\right\}$ of the  normal bundle  $NM$ and let $M_\epsilon$ be the set of points in $\tilde{M}$ that have distance less than $\epsilon$ from $M$. From the Tubular Neighbourhood Theorem\footnote{This can be found for example in Chapter IV, \S 5 of \cite{MR1931083}.} it follows that for $\epsilon$ small enough the restriction to $N_\epsilon$ of the exponential map of $\tilde{M}$ is a diffeomorphism from $N_\epsilon$ to $M_\epsilon$. Let $r\colon \tilde{M}\rightarrow \left[0,\infty\right)$ assign to every point in $\tilde{M}$  its distance to $M$. Then $r^2$ is a smooth function on each tubular neighbourhood $M_\epsilon$. 
\end{lem}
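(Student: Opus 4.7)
The plan is to use the given diffeomorphism $\exp\colon N_\epsilon\to M_\epsilon$ to transfer the question to the normal bundle, where the squared norm function is manifestly smooth. Concretely, I will show that
\[
r^2(p)=\bigl|\exp^{-1}(p)\bigr|^2\quad\text{for all }p\in M_\epsilon,
\]
so that $r^2$ appears as a composition of the smooth map $\exp^{-1}\colon M_\epsilon\to N_\epsilon\subset NM$ with the smooth function $V\mapsto g_{\pi(V)}(V,V)$ on the total space of the normal bundle, where $\pi\colon NM\to M$ is the bundle projection. That the squared norm is smooth on $NM$ is a local coordinate computation, since both the metric and the bundle coordinates depend smoothly on the basepoint.

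First I would observe the easy inequality: given $p=\exp_x(V)\in M_\epsilon$ with $V\in N_xM$, the radial curve $t\mapsto\exp_x(tV)$ for $t\in[0,1]$ is a smooth path from $x\in M$ to $p$ of length $|V|$, hence $r(p)\le|V|$. The substantive step is the reverse inequality. For this I would appeal to the Gauss lemma for the normal exponential map of an embedded submanifold: on $N_\epsilon$ (shrinking $\epsilon$ if necessary) the radial geodesics emanating orthogonally from $M$ are perpendicular to the hypersurfaces $\{|V|=\mathrm{const}\}$ in $M_\epsilon$. A standard argument then shows that any piecewise smooth curve in $M_\epsilon$ from a point $q\in M$ to $p$ has length at least $|V|$; if such a curve were to leave $M_\epsilon$ before reaching $p$, its length already exceeds $\epsilon>|V|$, so we may restrict to curves lying in $M_\epsilon$. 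Minimizing over all $q\in M$ gives $r(p)\ge|V|$.

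The main obstacle is precisely this Gauss-lemma step: it is the one place where we have to use properties of geodesics and the Riemannian structure rather than formal manipulations. Once the identity $r^2\circ\exp(V)=|V|^2$ on $N_\epsilon$ is established, smoothness of $r^2$ on $M_\epsilon$ is immediate, because $\exp\colon N_\epsilon\to M_\epsilon$ is a diffeomorphism by hypothesis and $V\mapsto|V|^2$ is smooth on $NM$. I would conclude by noting that although $r$ itself fails to be smooth along $M$ (where it behaves like the absolute value), the squaring kills this singularity, which is exactly why the statement is phrased in terms of $r^2$.
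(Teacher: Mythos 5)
Your proof is correct and follows essentially the same route as the paper: both establish the identity $r^2(p)=\bigl|\exp^{-1}(p)\bigr|^2$ on $M_\epsilon$ and then conclude that $r^2$ is smooth as a composition of smooth maps. The only difference is in how the identity is justified---you invoke the Gauss lemma for the normal exponential map to get the reverse inequality, while the paper appeals to the first-variation fact that any distance-minimizing geodesic from $p$ to $M$ meets $M$ orthogonally and then uses the uniqueness of the normal geodesic furnished by the tubular neighbourhood diffeomorphism; these are two standard and closely related ways of reaching the same conclusion.
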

\begin{proof}
 Since $exp\colon N_\epsilon\rightarrow M_\epsilon$ is a diffeomorphism for $\epsilon$ small enough, it follows that for every point $p\in M_\epsilon$ there is exactly one element $(q,V)=exp^{-1}(p)\in N_\epsilon$, i.e. there exists exactly one maximal geodesic, that starts from $q\in M$ in the normal direction $V$ to $M$ (with $|V|<\epsilon$) and reaches $p$ at the time $1$. It is also known that every geodesic that minimises the distance from $p$ to $M$ must start normal to $M$. So it follows that the distance from $p$ to $M$ is equal to the distance from $p$ to $q$. In other words
\[
r^2(p)=g(exp^{-1}(p),exp^{-1}(p)),
\] where $g$ is the Riemannian metric. This shows that $r^2$ is smooth as a composition of smooth maps.
\end{proof}

\subsection{Morse functions}

In \cite{MR0190942}, a special sort of Morse functions is described in connection with h-cobordism. Some of the results about the existence of such functions can be used later for the construction of a bordism in the proofs of \autoref{halfun} and \autoref{halfor}. This will be a crucial ingredient needed to be able to prove that symmetric squaring induces a well-defined mapping in bordism.

\begin{defi}
Let $W$ be a compact smooth $n$-manifold such that the boundary $\partial W$ is the disjoint union of two open and closed submanifolds, i.e. $\partial W=V_0\sqcup V_1$. A Morse function on $(W;V_0,V_1)$ is a smooth function $f\colon W\rightarrow\left[a,b\right]$ such that 
\begin{itemize}
 \item $f^{-1}(a)=V_0$ and $f^{-1}(b)=V_1$.
 \item All the critical points of $f$ are interior (lie in $W\smallsetminus\partial W$) and are non-degenerate.
\end{itemize}
\end{defi}

\begin{rem}
Because of Morse Lemma\footnote{See \cite{MR0163331} Lemma I.2.2.}, the critical points of Morse functions can be shown to be isolated and there are only finitely many critical points since $W$ is compact.
\end{rem}

\begin{thm}\label{morse}
There exists a Morse function on every triad $(W;V_0,V_1)$ where $W$ is a compact smooth $n$-manifold such that the boundary $\partial W$ is the disjoint union of two open and closed submanifolds, i.e. $\partial W=V_0\sqcup V_1$ just as in the previous definition.
\end{thm}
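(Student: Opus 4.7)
The plan is to construct a Morse function in two stages: first, build an auxiliary smooth function $f_0 \colon W \to [a,b]$ with the correct boundary behaviour and no critical points near $\partial W$, and then perturb it slightly in the interior to make all critical points non-degenerate. This is the standard strategy (essentially as in Milnor's \emph{Lectures on the h-Cobordism Theorem}); the results recalled earlier in the toolkit (collars and the compactness of $W$) do most of the setup work.

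For the first stage, apply the Collaring Theorem (cited in Footnote~\ref{footnote:collar}) to choose disjoint open collars $\kappa_0\colon V_0\times[0,\varepsilon)\hookrightarrow W$ and $\kappa_1\colon V_1\times(1-\varepsilon,1]\hookrightarrow W$. Pick a smooth monotone function $\lambda\colon[0,1]\to[a,b]$ with $\lambda(0)=a$, $\lambda(1)=b$, $\lambda'(t)>0$ on $[0,\varepsilon)\cup(1-\varepsilon,1]$, and define $f_0$ on the collar images by $f_0(\kappa_i(v,t))=\lambda(t)$; this has no critical points on the collars. Then choose any smooth $g\colon W\to(a,b)$ (for example via embedding $W$ in $\mathbb{R}^N$ and pulling back an affine function) and glue $f_0$ and $g$ using a smooth partition of unity $\{\psi, 1-\psi\}$ subordinate to the cover of $W$ by the two collars and a slightly shrunk interior. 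This produces a smooth $f_0\colon W\to[a,b]$ with $f_0^{-1}(a)=V_0$, $f_0^{-1}(b)=V_1$, and no critical points inside the outer half of the collars.

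For the second stage, embed $W$ smoothly in some $\mathbb{R}^N$ via the Whitney embedding theorem. For $c=(c_1,\dots,c_N)\in\mathbb{R}^N$, consider the family $f_c(x):=f_0(x)+\rho(x)\sum_{i=1}^N c_i x_i$, where $\rho\colon W\to[0,1]$ is a smooth cutoff that equals $1$ on a compact interior region $K$ and vanishes on a neighbourhood of $\partial W$ containing the critical-point-free collar half, so that $f_c$ still satisfies the boundary conditions of a Morse function for every $c$. A standard application of Sard's theorem to the map $(x,c)\mapsto(\nabla f_c(x), c)$ — or equivalently, to the second-component projection of the subset of $T^*W\times\mathbb{R}^N$ where $f_c$ has a degenerate critical point — shows that for almost every $c$, the function $f_c$ is Morse on $K$. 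Choosing $|c|$ small enough guarantees (by continuity of $\nabla f_c$ and non-vanishing of $\nabla f_0$ outside $K$) that no new critical points appear outside $K$, so $f_c$ is a Morse function on the whole triad.

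The main obstacle is the density statement for Morse functions used in the perturbation. It requires a careful Sard-type argument on the jet space (or on the family $f_c$ as above) to show that non-degeneracy is a generic condition; additional care is needed so that the perturbation is supported in the interior, thus not destroying the boundary normalisation $f_0^{-1}(a)=V_0$, $f_0^{-1}(b)=V_1$. Once these two compatibilities — genericity inside $K$ and preservation of the collar behaviour outside $K$ — are arranged simultaneously, the resulting $f_c$ is the required Morse function.
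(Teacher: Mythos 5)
Your proposal is correct, and it is essentially the argument in the source the paper cites: the paper's ``proof'' of \autoref{morse} is just a reference to Theorem 2.5 on page 9 of \cite{MR0190942} (Milnor, \emph{Lectures on the h-Cobordism Theorem}), and the two-stage construction you give --- collar-based normalisation followed by an embedding-and-Sard perturbation supported away from the boundary --- is precisely the proof carried out there. The only places that would need a touch more care if written out are the exact phrasing of the parametric transversality/Sard step (it is cleanest to show $0$ is a regular value of $(x,c)\mapsto df_c(x)$ viewed as a section of the pulled-back cotangent bundle and then apply the parametric transversality theorem), and an explicit statement that $K$ is chosen large enough to contain the entire critical set of $f_0$, which is a compact subset of the interior, so that the estimate $\lvert\nabla f_0\rvert\geq\varepsilon>0$ on $\overline{\{\rho>0\}}\smallsetminus K^{\circ}$ is available.
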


\begin{proof}
 This is Theorem 2.5 in \cite{MR0190942} on page 9.
\end{proof}

For our purposes, a slightly more complicated version of \autoref{morse} is needed. The proof, however, is based on the proof of \autoref{morse}.

\begin{thm}\label{meinmorse}
 Let $(M_0,\partial M_0;f_0)$ and $(M_1,\partial M_1;f_1)$ be two bordant singular $n$-manifolds. Furthermore, let $(W,\partial W; F)$ be a bordism between them, which means
\begin{itemize}
			\item $W$ is a compact $(n+1)$-manifold with boundary.
			\item It is $\partial W= M_0\cup M_1\cup M^{\prime}$ with $\partial M^{\prime}=\partial M_0\sqcup \partial M_1$ and $M_i\cap M^{\prime}=\partial M_i$ for $i=0, 1$.
			\item $F_{|{M_i}}=f_i$ for $i=0, 1$.
			\item $F(M^{\prime})\subset A$.
\end{itemize}

Then there exists a Morse function $f\colon W\rightarrow \left[0,1\right]$ such that
\begin{enumerate}
 \item $f$ does not have any critical points in a neighbourhood of $\partial W$,
 \item $f^{-1}(i)=M_i$ for $i=0,1$,
 \item $f_{|M^\prime}$ is a Morse function on the triad $(M^\prime;\partial M_0,\partial M_1)$ that takes distinct values at distinct critical points,
 \item $f$ takes distinct values at distinct critical points of $f$ and additionally no critical point of $f_{|M^\prime}$ has the same value as any critical point of $f$.
\end{enumerate}

\end{thm}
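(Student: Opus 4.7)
The plan is to construct $f$ in three stages, working from $\partial W$ inward. First, apply \autoref{morse} to the triad $(M';\partial M_0,\partial M_1)$ to obtain a Morse function $g\colon M'\to[0,1]$ with $g^{-1}(0)=\partial M_0$ and $g^{-1}(1)=\partial M_1$. A small $C^\infty$-perturbation supported away from $\partial M'$ separates the finitely many critical values of $g$; this perturbed $g$ will serve as $f|_{M'}$ and already gives condition (3).

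Next, I would extend the prescribed boundary data (the values $0$ on $M_0$, $1$ on $M_1$, and $g$ on $M'$) into $W$ via a collar. Because $W$ has corners along $\partial M_0\cup\partial M_1$, first apply \autoref{straight} so that $W$ becomes a smooth manifold with smooth boundary $\partial W=M_0\cup M'\cup M_1$, and choose a collar $\kappa\colon\partial W\times[0,\epsilon)\to W$ via the Collaring Theorem. On this collar, define $f$ piecewise as $f(\kappa(p,t))=t/\epsilon$ near $M_0\smallsetminus\partial M_0$, as $f(\kappa(p,t))=1-t/\epsilon$ near $M_1\smallsetminus\partial M_1$, and as $f(\kappa(p,t))=g(p)+\delta t$ for $p$ in the interior of $M'$ (with $\delta>0$ small), then patch the three pieces on neighborhoods of $\partial M_0$ and $\partial M_1$ by a partition of unity in the straightened coordinates. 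The patching is consistent because $g$ vanishes on $\partial M_0$ and equals $1$ on $\partial M_1$, matching the prescriptions coming from the $M_0$- and $M_1$-pieces; by choosing $\delta$ and the patching cutoffs small, the normal component $\partial_t f$ remains positive throughout the collar. Hence $df$ is nonvanishing on the collar -- even at a critical point of $g$ the normal component of $df$ is nonzero -- so $f$ has no critical points in a neighborhood of $\partial W$. This secures (1), (2), and compatibility with (3).

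For the third stage, extend $f$ smoothly from a slightly smaller collar to all of $W$ (for instance by interpolating to the constant $1/2$ in the deep interior), and then invoke the standard Morse approximation theorem to perturb $f$ on the interior of $W$ to a Morse function $\tilde f$, leaving $f$ unchanged on a fixed collar of $\partial W$. This preserves (1)--(3). A final small perturbation supported in small disjoint neighborhoods of each of the finitely many interior critical points of $\tilde f$ shifts their critical values so as to be pairwise distinct and to avoid the finitely many critical values of $g$, yielding (4).

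The main obstacle is stage 2: the three boundary pieces $M_0$, $M_1$, $M'$ meet at the corner loci $\partial M_0$ and $\partial M_1$ where $f$ must simultaneously satisfy three different prescriptions (two of them constant, one equal to the Morse function $g$). Putting $W$ into a smooth-boundary framework via \autoref{straight} is the essential first move, but the real work is constructing a partition-of-unity patching that respects all three prescriptions and maintains a nonzero normal derivative throughout the collar -- so that no critical points are created near $\partial W$, even near the critical points of $g$ on $M'$. All the careful bookkeeping of collars, straightened coordinates and cutoff sizes happens here.
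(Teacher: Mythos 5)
Your overall architecture (build $f|_{M'}$ first, extend via a collar, extend to the interior, then perturb to separate critical values) matches the paper's strategy, and you correctly identify the collar extension as the crux. However, there is a genuine gap in stage 2, and the claim you make to paper over it is false.

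On the collar you prescribe $f(\kappa(p,t)) = g(p)+\delta t$ over the interior of $M'$ (so $\partial_t f = \delta > 0$) but $f(\kappa(p,t)) = 1 - t/\epsilon$ over $M_1$ (so $\partial_t f = -1/\epsilon < 0$). These normal derivatives have \emph{opposite signs}, so any partition-of-unity blend between the two prescriptions near $\partial M_1$ forces $\partial_t f$ to pass through zero somewhere in the transition region. Your assertion that ``by choosing $\delta$ and the patching cutoffs small, the normal component $\partial_t f$ remains positive throughout the collar'' is therefore incorrect -- it is already negative on the $M_1$ piece, and it must vanish in between. (There is also the smaller inconsistency that $g(p)+\delta t > 1$ for $p$ near $\partial M_1$, so the prescriptions do not even match in value across the corner.) The Morse condition (1) is exactly about controlling what happens at such zeros of the normal derivative, so this is not a detail you can defer to ``careful bookkeeping'': you need an argument that $d_p f \neq 0$ wherever $\partial_t f = 0$, and your construction does not supply one.

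The paper resolves this with a specific choice you did not make: on the $M'$-piece it takes $\bar f(x,t) = \phi(x) + (1-2\phi(x))\,t$, where $\phi$ is the Morse function on the triad $(M';\partial M_0,\partial M_1)$ chosen (via Lemma~2.8 of \cite{MR0190942}) so that $\tfrac12$ is a \emph{regular} value of $\phi$. Then $\partial_t \bar f = 1 - 2\phi(x)$ varies continuously from $+1$ at $\partial M_0$ to $-1$ at $\partial M_1$, matching the $M_0$- and $M_1$-prescriptions exactly at the corners (so no partition-of-unity patching is needed there), and vanishes precisely on $\phi^{-1}(\tfrac12)$, which by the regular-value condition is a locus where $d\phi \neq 0$, hence $d\bar f \neq 0$. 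That is the idea your proposal is missing: the normal derivative must change sign across $M'$, and one has to arrange the zero set of $\partial_t f$ to avoid the critical points of $\phi$; fixing $\tfrac12$ as a regular value of $\phi$ does exactly that. Once you have this, the remaining steps (extend over the interior by a partition of unity and general position, then separate critical values via Lemma~2.8) go through as you outline.
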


\begin{proof}
 First use \autoref{morse} and Lemma 2.8 on page 17 of \cite{MR0190942} to define a Morse function $\phi\colon M^\prime\rightarrow \left[0,1\right]$ on $(M^\prime;\partial M_0,\partial M_1)$, for which $\frac{1}{2}$ is a regular value and which maps distinct critical points to distinct values. Then choose a collaring neighbourhood $\kappa\colon U \rightarrow \partial W\times \left[0,1\right)$ and define $\bar{f}\colon\partial W\times\left[0,1\right)\rightarrow\left[0,1\right]$ in the following way
\[\bar{f}(x,t)= \begin{cases}
         t & \text{ for } x\in M_0,\\
	 \phi(x)+(1-2\phi(x))\cdot t & \text{ for } x\in M^\prime,\\
	 1-t & \text{ for } x\in M_1.	
         \end{cases}\]
This is well defined, because the definitions agree on the intersections $M_i\cap M^{\prime}=\partial M_i$ due to the fact that $\phi^{-1}(i)=\partial M_i$ for $i=0, 1$. Furthermore, the second desired property holds for $f:=\bar{f}\circ \kappa$. To achieve the first of these, look at the derivative of $\bar{f}$ in $t$-direction. It is
\[
 \frac{\partial \bar{f}}{\partial t}(x)=\begin{cases}
                               1 & \text{ for } x\in M_0,\\
			       1-2\phi(x) & \text{ for } x\in M^\prime,\\
			       -1 & \text{ for } x\in M_1
                              \end{cases}
\]which is only zero for points where $\phi(x)=\frac{1}{2}$. But $\phi$ is assumed to have $\frac{1}{2}$ as a regular value, so that we can conclude that there exists a neighbourhood of the boundary $\partial W$ on which $f$ has only regular points. Using a partition of unity, this function can be extended to a Morse function $f\colon W\rightarrow \left[0,1\right]$ satisfying the first three of the desired properties.\footnote{For details, consult \cite{MR0190942}.} After that, this function can be altered slightly with the help of Lemma 2.8 on page 17 of \cite{MR0190942} to fulfil all conditions needed.
\end{proof}

\section{Bordism and symmetric squaring}

Finally in this section, we can prove that symmetric squaring induces a well-defined map in bordism. This means first determining how such a map should be defined and for this purpose we need to say a lot about neighbourhoods of the diagonals of squared manifolds. After defining the symmetric squaring in bordism we prove that it is well-defined in unoriented and oriented bordism in \autoref{uncase} and \autoref{orcase} respectively.

\subsection{Diagonal problems}\label{diagsec}
Up to this point, we have come across several instances where it was necessary to work relative to or to cut out the diagonal of a squared manifold. Those were for example
\begin{itemize}
 \item Starting with a smooth manifold $M$ and performing the symmetric squaring on it doesn't give back a smooth manifold, since there is no canonical smooth structure on the projection of the diagonal in $M\times M$ to $\sys{M}$ with respect to the coordinate interchanging map.
\item Furthermore, to define the fundamental class in \v{C}ech homology, we would like to know that to compact, smooth manifolds with boundary there can be assigned compact, smooth symmetric squared manifolds with boundary in a reasonable way.
\item From \autoref{relbord} we know that regarding \v{C}ech bordism groups relative to a neighbourhood of the diagonal means working relative to a neighbourhood of the diagonal of the occurring squared singular manifolds as long as the appearing maps from the manifolds to the topological spaces map neighbourhoods of the diagonal in the squared manifolds to neighbourhoods of the squared spaces.
\end{itemize}
These lead to requirements a neighbourhood of a diagonal of a squared manifold needs to meet so that we can use it appropriately in the following. Since we aim at dealing with bordism, we state these requirements in the context of two bordant singular manifolds and the bordism between them. So let $(M_0,\partial M_0;f_0)$ and $(M_1,\partial M_1;f_1)$ be two (unoriented) bordant singular $n$-manifolds in $(X,A)$. Furthermore, let $(W,\partial W; F)$ the bordism between them, which means
\begin{enumerate}
			\item $W$ is a compact $(n+1)$-manifold with boundary.
			\item It is $\partial W= M_0\cup M_1\cup M^{\prime}$ with $\partial M^{\prime}=\partial M_0\sqcup \partial M_1$ and $M_i\cap M^{\prime}=\partial M_i$ for $i=0, 1$.
			\item $F_{|{M_i}}=f_i$ for $i=0, 1$.
			\item $F(M^{\prime})\subset A$.
\end{enumerate}
Choose a neighbourhood $U_\Delta$ of the diagonal in $X\times X$. Then we would like to know that there exists a neighbourhood $V_\Delta$ of the diagonal in $W\times W$, satisfying the following conditions.
\begin{itemize}
	\item[\textbf D1] The complement of the neighbourhood $V_\Delta$ in $W \times W$ is a smooth compact manifold with boundary.
	\item[\textbf D2] The neighbourhood $V_\Delta$ is symmetric in the sense that for each point $(w_1,w_2)$ contained in the neighbourhood $\tau(w_1,w_2)=(w_2,w_1)$ is contained in it as well, so that the quotient of the complement by $\tau$ is again a smooth compact manifold.
	\item[\textbf D3] The neighbourhood $V_\Delta$ is mapped to $U_\Delta$ by $F \times F$.
	\item[\textbf D4] The intersection of $V_\Delta$ with the manifolds $(M_i \times M_i,\partial(M_i \times M_i),f_i\times f_i)$ for $i=0,1$ respectively are neighbourhoods of the diagonals $\Delta_{M_i}$ in $M_i \times M_i$, which satisfy conditions similar to \textbf{D1}-\textbf{D3}.
\end{itemize}
It was explained above why conditions \textbf{D1} and \textbf{D3} are required. Condition \textbf{D2} is needed, however, to be able to control the smoothness of the quotient of the squared manifold reduced by a neighbourhood of the diagonal in a sufficient way. In the proofs of \autoref{halfun} and \autoref{halfor}, a bordism is constructed inside a squared bordism just as $W\times W$ is. In the course of this construction it will be necessary to have exact knowledge about how $V_\Delta$ intersects the manifolds $M_i\times M_i$ for $i=0,1$ and this is why \textbf{D4} is needed.\\
One important property these requirements imply is that choosing two different neighbourhoods satisfying \textbf{D1}-\textbf{D4} does not give two different bordism classes. More precisely, we can prove the following remark.

\begin{rem}\label{diag-choice}
Let $(M,\partial M,f)$ be a singular $n$-manifold in $(X,A)$. Furthermore, let $({\widetilde{M \times M}})_1$ and $(\widetilde{M \times M})_2$ represent two different possibilities of choices for removing the diagonal, i.e. two different choices of neighbourhoods of the diagonal $\Delta_M$, satisfying conditions \textbf{D1}-\textbf{D4}, have been removed from $M\times M$ to result in the manifolds $({\widetilde{M \times M}})_1$ and $(\widetilde{M \times M})_2$ respectively. Then $({\widetilde{M \times M}})_1/\tau$ and $(\widetilde{M \times M})_2/\tau$ are bordant manifolds in $\mathcal{N}_{2n}\left(\sys{X},pr(X\times A\cup A \times X \cup U_{\Delta})\right)$.
\end{rem}
\begin{proof}
Implicitely, this proof uses a modification of a remark from \cite{MR548463}, which was cited before as \autoref{relbord} on page \pageref{relbord}. More precisely, we choose a third manifold $B$, use the remark to see that both $({\widetilde{M \times M}})_1/\tau$ and $(\widetilde{M \times M})_2/\tau$ are bordant to $B$ and then use associativity of the bordism relation to show the above claim.\\
First choose a neighbourhood of the diagonal contained in both given neighbourhoods. This results in choosing a third manifold $({\widetilde{M \times M}})_3$ containing both given choices. Denote $B:=({\widetilde{M \times M}})_3/\tau$.\\ A bordism between $({\widetilde{M \times M}})_1/\tau$ and $(\widetilde{M \times M})_2/\tau$ is then given by the singular $(2n+1)$-manifold $\left((B \times [0,1],\partial(B\times [0,1]); F\right)$, where $F([(m_1,m_2),(m_2,m_1),t])=[(f(m_1),f(m_2)),(f(m_2),f(m_1))]$ for all $t\in[0,1]$:
\begin{itemize}
 \item $\partial(B\times [0,1])=\partial B\times [0,1]\cup B\times 0\cup B \times 1$. The way $B$ was chosen assures that $(\widetilde{M \times M})_1/\tau$ and $(\widetilde{M \times M})_2/\tau$ can be regarded as submanifolds of $B\times 0$ and $B\times 1$ respectively. 
 \item Using this identification, the map $F$ is equal to $f$ on $(\widetilde{M \times M})_1/\tau$ and $(\widetilde{M \times M})_2/\tau$.
 \item Property \textbf{D3} makes sure that $F$ maps the complements of $({\widetilde{M \times M}})_1/\tau$ and $(\widetilde{M \times M})_2/\tau$ in $\partial(B\times [0,1])$ to $pr(X\times A\cup A \times X \cup U_{\Delta})$.
\end{itemize}
 \end{proof}

Using some of the tools provided in the preceding section, we can now prove the existence of neighbourhoods satisfying the required properties.

\begin{prop}\label{diagonal}
There exists a neighbourhood of the diagonal $\Delta_W$ in $W\times W$ satisfying \textbf{D1-D4}.
\end{prop}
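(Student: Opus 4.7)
The plan is to build $V_\Delta$ as a sublevel set of a smooth, $\tau$-invariant squared distance function to the diagonal, then shrink it until all four conditions hold simultaneously. First I would pass to the double $D(W)$ from \autoref{subsec:double}, and likewise to $D(M_i)$ for $i=0,1$, so that $W$ embeds as a codimension-zero submanifold of a closed smooth manifold $D(W)$. On the product $D(W)\times D(W)$ I would pick any Riemannian metric and average it under the $\tau$-action $(w_1,w_2)\mapsto(w_2,w_1)$; the resulting metric is still Riemannian and makes $\tau$ an isometry.

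Applying \autoref{8-5} to the diagonal $\Delta_{D(W)}\subset D(W)\times D(W)$ yields a tubular neighbourhood on which the squared distance function $r^2$ is smooth. Because $\tau$ fixes $\Delta_{D(W)}$ pointwise and is an isometry of the averaged metric, $r^2$ is automatically $\tau$-invariant, which will deliver condition \textbf{D2} for any sublevel set. I would carry out the same construction intrinsically on each $M_i\times M_i$ using the induced metric, obtaining smooth $\tau$-invariant squared distance functions $r_i^{\,2}$ defined near $\Delta_{M_i}$.

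The size of the neighbourhood is then chosen in two stages. Compactness of $\Delta_W$ together with continuity of $F\times F$ and openness of $U_\Delta$ give some $\varepsilon_0>0$ such that $\{r^2<\varepsilon_0\}\cap(W\times W)$ is mapped into $U_\Delta$ by $F\times F$, producing \textbf{D3}. Applying Sard's theorem to $r^2$, to its restrictions to the various boundary strata of $W\times W$ (the pieces $M_i\times M_i$, $M_i\times M^\prime$, $M^\prime\times M^\prime$), and to each $r_i^{\,2}$ on $M_i\times M_i$, I can pick a common regular value $\varepsilon\in(0,\varepsilon_0)$ for this finite collection of smooth functions. Setting $V_\Delta:=\{r^2<\varepsilon\}\cap(W\times W)$, regularity ensures that $\{r^2=\varepsilon\}$ meets every stratum of $\partial(W\times W)$ transversely, so the complement $W\times W\setminus V_\Delta$ is a manifold with corners whose angles can be straightened via \autoref{straight}. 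This produces \textbf{D1}, and together with $\tau$-invariance gives the smooth quotient by $\tau$ mentioned in \textbf{D2}. The simultaneous regularity on each $M_i\times M_i$, combined with the same construction applied one dimension lower, yields \textbf{D4}.

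I expect the main obstacle to be the bookkeeping around the corners of $W\times W$. The product already carries corners along $\partial W\times\partial W$, and the several strata meeting there (arising from the decomposition $\partial W=M_0\cup M_1\cup M^\prime$) must all be cut transversely by the single level set $\{r^2=\varepsilon\}$ in a fashion compatible with the $\tau$-quotient and with the restrictions to $M_i\times M_i$. Arranging that Sard's theorem is being applied to a finite enough list of smooth functions whose common regular values are still dense, and then invoking \autoref{straight} stratum-by-stratum to get a genuine smooth manifold with boundary, is where the proof will need the most care.
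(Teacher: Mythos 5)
Your overall strategy mirrors the paper's: pass to a closed manifold by doubling, average a Riemannian metric to make $\tau$ an isometry, apply \autoref{8-5} to get a smooth $\tau$-invariant squared distance function, use Sard to choose a common regular cutoff small enough that \textbf{D3} holds, cut, and straighten corners. The one structural divergence is the choice of ambient closed manifold. You work in $D(W)\times D(W)$, whereas the paper first straightens the product corners of $W\times W$ along $\partial W\times\partial W$ and only then takes $D(W\times W)$. Your choice has a real advantage: smoothness of $\tau$ on $D(W)\times D(W)$ is immediate, being a coordinate swap on a smooth product, whereas the paper has to argue this on $D(W\times W)$ by writing out the collar and angle-straightening charts explicitly.

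The trade-off, however, is a genuine gap. Inside $D(W)\times D(W)$ the subset $W\times W$ still carries codimension-two corners along $\partial W\times\partial W$. Since $\{r^2=\varepsilon\}$ is a small tube around $\Delta_{D(W)}$ and $\partial W\times\partial W$ meets the diagonal along $\Delta_{\partial W}$, the level set $\{r^2=\varepsilon\}$ necessarily intersects $\partial W\times\partial W$, creating codimension-\emph{three} corners on $W\times W\setminus V_\Delta$ where the two product boundary faces and the cut meet. \autoref{straight} only straightens codimension-two corners (a collar of type $M\times\mathbb R_+\times\mathbb R_+$), so your appeal to it at these points does not go through as stated. You would need to iterate straightenings and check $\tau$-equivariance of each step, or reorder the construction so that $\partial W\times\partial W$ is smoothed \emph{before} the cut is made; the latter is exactly what the paper does, and it is also why the paper must compute $R^2$ explicitly in collar coordinates near $\partial W\times\partial W$ (cases (c) and (d) of the proof), a verification that has no counterpart in your sketch. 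Also, your auxiliary intrinsic functions $r_i^{\,2}$ on $M_i\times M_i$ are a distraction: \textbf{D4} concerns the intersection of the single $V_\Delta$ with $M_i\times M_i$, so what you actually need is regularity of $\varepsilon$ for $r^2|_{M_i\times M_i}$ and $r^2|_{\partial(M_i\times M_i)}$, which you do mention, not a separately built neighbourhood.
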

\begin{proof}
If both $M_0$ and $M_1$ are the empty manifold, then it can happen that $W$ has empty boundary. In that case the following construction of the neighbourhood of the diagonal can be performed more or less in the same way. Some steps can or have to be omitted then, however, which makes it even easier. So from now on it will be assumed that $\partial W\neq\emptyset$.\\ 
The idea of how to construct the desired neighbourhood is to use a Riemannian metric to get a squared distance function which is smooth in a neighbourhood of the diagonal and then to look at inverse images of regular values of this function. Some of these steps only work in manifolds without boundary and this explains the first step in the construction.\\
For the manifold $W\times W$ with boundary, let $D(W\times W)$ be its double\footnote{Compare \autoref{subsec:double} for a definition.}. This is a manifold without boundary which we are going to examine very closely in the following. We start by explaining why the swapping of coordinates $\tau$ acts smoothly on this manifold. This is clear for all points that are surrounded by charts that are only crossed charts of the manifold $W$. But the smooth structure of $D(W\times W)$ does not only involve such charts. During its construction angles were straightened first to get a smooth manifold $W\times W$, which was then glued to a copy of itself with the help of collars. Angles were only straightened at points in $\partial W\times \partial W$.\\ As explained in \autoref{straight}, during the straightening process there are constructed charts around points in $\partial W\times \partial W$ of the form
 \begin{align}\label{chart_straight}
U\times U\rightarrow \partial W\times \partial W\times  \mathbb R\times \mathbb R_+,
\end{align} where $U$ denotes a neighbourhood of the boundary $\partial W$ in $W$.\footnote{To get charts that map to a subset of $\mathbb R^n \times \mathbb R^n \times \mathbb R\times \mathbb R_+$, one has to combine this with charts of the smooth manifold $\partial W$.} Expressed in this charts the boundary of $W\times W$ corresponds to the subset $\partial W\times \partial W\times \mathbb R\times 0 \subset \partial W\times \partial W\times  \mathbb R\times \mathbb R_+$.\\ For the smooth charts at the glueing-points of the double of $W \times W$  a collar is used, i.e. a diffeomorphism from an open neighbourhood of the boundary of $W\times W$ to $\partial (W\times W) \times \mathbb R_+$.\\
Combining the last two facts, we see that (\ref{chart_straight}) can be regarded as a collar of $W\times W$ as well.\footnote{Collars are unique, c.f. \autoref{rem:colluni}. That is why the differentiable structure of the double does not depend on the chosen collar.} This means that we can use (\ref{chart_straight}) as a chart for the double as explained in \autoref{double}.\\ The chart (\ref{chart_straight}) is constructed  with the help of the diffeomorphism\begin{align*}
\alpha\colon\mathbb R_+\times \mathbb R_+ \smallsetminus (0,0) &\rightarrow \mathbb R\times \mathbb R_+\smallsetminus (0,0) \text{ given by }\\
(r,\theta)&\mapsto(r,2\theta) \text{ for } 0\leq\theta\leq\pi/2  \text{ in polar coordinates.}
\end{align*} Using this, the map representing the action $\tau$ can be written as 
\[
(x,y,r,\theta)\mapsto (y,x,r, \pi-\theta) \text{ for } 0\leq\theta\leq\pi \text{ in polar coordinates}
\] in the neighbourhood of boundary points of $W\times W$ in $D(W\times W)$. So $\tau$ acts smoothly on $D(W\times W)$.\\
This means that we can choose a Riemannian metric on $D(W\times W)$ which is invariant under the action $\tau$.\footnote{The existence of such a metric follows from Theorem VI.2.1 in \cite{MR0413144}.} Furthermore, as it is a fixed point set of a smooth action, the double $D(\Delta_W)$ of the diagonal $\Delta_W$ in $W\times W$ is then a compact, embedded, Riemannian submanifold of $D(W\times W)$. So \autoref{8-5} on page \pageref{8-5} assures that there exists a neighbourhood of $D(\Delta_W)$ in $D(W\times W)$ such that the square of the distance function $R^2\colon D(W\times W)\rightarrow \mathbb R_+$, which assigns to every point the square of its distance from the double-diagonal $D(\Delta_W)$, is smooth in that neighbourhood. \\
Because of Sard's Theorem\footnote{See for example Chapter 2 in \cite{MR0226651}.}, we can choose a regular value $\delta>0$ of $R^2$ and $R^2_{|\partial(W\times W)}$ such that
\begin{equation} \label{D3}
 (R^2)^{-1}(\left[0,\delta\right))\subset (F\times F)^{-1}(U_\Delta).
\end{equation}
The next and last step of the construction is now to undo the doubling of the manifold $W\times W$, i.e. the claim is that $V_\Delta:=(R^2)^{-1}\left(\left[0,\delta\right)\right)\cap W\times W$ is a neighbourhood of the diagonal with the properties \textbf{D1-D4}. The way $\delta$ was chosen in (\ref{D3}) immediately tells us that \textbf{D3} is satisfied by $V_\Delta$.\\ Condition \textbf{D2} is fulfilled since we defined $V_\Delta$ by means of the distance from the diagonal and because we chose a $\tau$-invariant Riemannian metric to determine that distance. From these two facts it follows that points $(x,y)$ have the same distance from the diagonal as points $(y,x)$ have.\\ To prove \textbf{D1}, we have to take a close look at the charts involved in the construction, because we have to show that the complement of $V_\Delta$, i.e. $\mathcal V:=(W\times W)\cap (R^2)^{-1}\left(\left[\delta,\infty\right)\right)$, is a smooth $2n+2$-dimensional manifold with boundary. For $(x,y)\in\mathcal V$, we distinguish between the following cases:
\begin{enumerate}[(a)]
 \item $(x,y)\notin \partial(W\times W)$ and $R^2(x,y)>\delta$.\\ This is the easiest case, concerning only the interior of $\mathcal V$. In a neighbourhood of this point the original charts of $W\times W$ can be used for a manifold structure on $\mathcal V$, since these are used for the differentiable structures in the doubling-construction as well as they can be used to equip the inverse image set $(R^2)^{-1}\left(\left(\delta,\infty\right)\right)$ with a differentiable structure.
\end{enumerate} 
The following three cases concern the boundary of $\mathcal V.$
\begin{enumerate}[(a)]
\stepcounter{enumi}
 \item $(x,y)\notin \partial(W\times W)$ and $R^2(x,y)=\delta$.\\ For points in $(R^2)^{-1}(\delta)$ boundary charts fitting together with the interior ones are constructed as follows. For simplicity define $h:=(R^2-\delta)\colon D(W\times W)\rightarrow \mathbb R$, such that zero is a regular value for $h$.  Using the properties of that regular value $0$ of $h$, find charts\footnote{See for instance ``Local Submersion Theorem'' on page 20 in \cite{MR0348781} to learn how.} $\varphi\colon U\rightarrow \mathbb R^{2(n+1)}$ of D($W\times W)$ around $(x,y)$ and a sign-preserving chart $\psi\colon V\rightarrow \mathbb R$ of $\mathbb R$ around $0$, such that $\tilde{h}:=(\psi\circ h\circ\varphi^{-1})\colon\varphi(U)\rightarrow \mathbb R$ is of the form $(x_1,x_2,\ldots, x_{2(n+1)})\mapsto x_1.$\\
Then $\tilde{h}^{-1}(\left[0,\infty)\right)=\left\{(x_1,x_2,\ldots,x_{2(n+1)})\in\varphi(U)\colon x_1\geq 0\right\}$ is open in $\mathbb R_+^{2(n+1)}$ and $\varphi$ restricted to $\varphi^{-1}(\tilde{h}^{-1}(\left[0,\infty)\right))$ is a boundary chart for $(R^2)^{-1}\left(\left[\delta,\infty\right)\right)$ around $(x,y)$. Since $\varphi$ is a chart of $W\times W$ around $(x,y)$ as well, because $(x,y)$ does not lie in the boundary of $W\times W$, this construction gives smooth boundary charts for $\mathcal V$. \label{delta}
 \item $(x,y)\in\partial(W\times W)$ and $R^2(x,y)>\delta$:\\Since we deal with a point in the boundary $\partial(W\times W)$ and the smooth structure that it inherits as a subset of the double $ D(W\times W)$, we have to think about the collar in (\ref{chart_straight}) above in connection with the map $R^2$. In terms of this collar the map $R^2$ can be written as\footnote{For this to work we assume that the Riemannian metric we chose at the beginning was chosen in a way such that it gives the true distances in the above situation.}
\begin{align}\label{R^2}
(x,y,r,\theta)\mapsto r^2cos^2\theta + \frac{1}{2}d(x,y)^2 \text{ for }0\leq\theta \leq \pi \text{ in polar coordinates,} 
\end{align}since the diagonal corresponds to the $y$-axis in the half plane $\mathbb R\times\mathbb R_+$ above. Here $d(-,-)$ denotes the Riemannian distance between two points and to see the above formula we use that the squared distance in product spaces is the sum of the squared distances in each single space. This is a smooth map and the inverse image of the set $(\delta,\infty)$ with respect to this map is open in $\partial W\times \partial W\times \mathbb R\times \mathbb R_+$ and has a smooth structure coming from the collaring charts described above. 
\label{collar} 
 \item $(x,y)\in\partial(W\times W)$ and $R^2(x,y)=\delta$.\\ In this case it will be necessary to straighten angles as in \autoref{straight} on page \pageref{straight}. To be able to straighten angles of $\mathcal V$, this lemma requires the subset $\tilde{V}:=\left\{(x,y)\in \mathcal V |(x,y)\in\partial(W\times W)\text{ and }R^2(x,y)=\delta\right\}$ to be a $2n$-dimensional differentiable submanifold of $\mathcal V$. It also requires $\tilde{V}$ to have a neighbourhood $U$ and a homeomorphism $\phi$ onto $\tilde{V}\times \mathbb R_+ \times \mathbb R_+$ with $\phi(x)=(x,0,0)$ for $x\in\tilde{V}$ and where $\phi$ is a diffeomorphism from $U\smallsetminus \tilde{V}$ onto $\tilde{V}\times \mathbb R_+ \times \mathbb R_+ \smallsetminus\tilde{V}\times 0\times 0$.\\ To see this, we combine the two preceding cases. First look at the smooth map (\ref{R^2}) and its restriction to the boundary of $W\times W$. Using Sard's Theorem again, we can assume, that $\delta$ is a regular value for both these maps expressed in terms of the collaring charts. The inverse image of $\delta$ with respect to the restricted map is then a smooth manifold and it is also exactly the subset $\tilde{V}$ required to be a submanifold above.\\ For the other requirement, recall that in (\ref{collar}) it was shown that the inverse image of $(\delta,\infty)$ can be viewed as a subset of $\partial W\times \partial W\times \mathbb R\times \mathbb R_+$. Using the same methods as in (\ref{delta}) with respect to this subset, one can show that the inverse image of $\left[\delta,\infty\right)$ can be viewed as a subset of $\partial W\times \partial W\times \mathbb R_+\times \mathbb R_+$. But this is a neighbourhood of $\tilde{V}$ and the maps just used while performing the construction as in (\ref{delta}) and (\ref{collar}) can be combined to give a map with the properties requested for $\phi$.  This means that we can straighten angles here as in \autoref{straight} and so there exists a smooth structure on $\mathcal{V}$.
\end{enumerate}
What is left to prove for \textbf{D4} is that $M_i\times M_i\cap(R^2)^{-1}\left(\left[\delta,\infty\right)\right)$ are smooth compact manifolds with boundary for $i=0,1$, since the other two properties are satisfied automatically. Without loss of generality, using Sard's Theorem again, we can assume that $\delta$ is a regular value for the restriction of $R^2$ to the smooth compact manifolds $M_i \times M_i$  and their boundaries for $i=0,1$ respectively. This means that we are in the same situation as in the above investigation. Namely, we have to deal with a smooth manifold with boundary of the form of a cartesian product where a neighbourhood of the diagonal is cut out with the help of a squared distance function. That is why we can repeat similar arguments as in the previous reasoning to show that the complement of this neighbourhood in the manifolds $M_i \times M_i$ is itself a smooth manifold with boundary.
\end{proof}

\begin{note}\label{schlange}
From now on, a cartesian product $\textbf{-} \times \textbf{-}$ where a neighbourhood of the diagonal is removed satisfying all \textbf{D1}-\textbf{D4} will be indicated by $\widetilde{\textbf{-} \times \textbf{-}}$, more precisely $\widetilde{\textbf{-} \times \textbf{-}}:=(\textbf{-} \times \textbf{-})\smallsetminus(V_\Delta\cap(\textbf{-} \times \textbf{-}))$, where $V_\Delta$ denotes a neighbourhood of the diagonal as defined in the proof above.
\end{note}

\subsection{The unoriented case}\label{uncase}

It is now time to prove one of our main results, namely that symmetric squaring induces a well-defined map in (unoriented) bordism.
\begin{thm}\label{halfun}
 Let $(X,A)$ be a pair of topological spaces. The map \[
\sys{(\hspace{0,5em}\cdot\hspace{0,5em})}\colon\mathcal{N}_n(X,A)\rightarrow\check{\mathcal{N}}_{2n}\left(\sys{(X,A)}\right)
\] defined by mapping $(M,\partial M;f)$ to \[\sys{(M,\partial M;f)}:=\left\{\widetilde{(M \times M)}/\tau,\partial(\widetilde{(M \times M)}/\tau);\sys{f}_{|{\widetilde{(M \times M)}/\tau}} \right\}_{U_i\supset \Delta_X \text{open}},
\]
where each $\widetilde{M \times M}$ is chosen such that it satisfies conditions \textbf{D1}-\textbf{D4} with respect to one open neighbourhood $U_i$ of $\Delta_X$, is well defined.
\end{thm}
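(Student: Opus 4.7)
The statement requires two verifications: that for each singular $n$-manifold representative $(M,\partial M;f)$ the prescribed family genuinely constitutes an element of the inverse limit $\check{\mathcal N}_{2n}(\sys{(X,A)})$, and that the assignment descends to bordism classes of $(M,\partial M;f)$.

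For the first, I fix an open neighbourhood $U$ of $pr(X\times A\cup A\times X\cup\Delta_X)$ in $\sys X$ and apply \autoref{diagonal} in the degenerate case where the ``bordism'' is $M$ itself (take $W=M$, $M_0=M$, $M_1=\emptyset$, $M'=\partial M$, $F=f$) with target diagonal neighbourhood $(f\times f)^{-1}(pr^{-1}(U))\subset X\times X$. This produces a symmetric open neighbourhood $V_\Delta\subset M\times M$ of $\Delta_M$ satisfying \textbf{D1}--\textbf{D4}. Conditions \textbf{D1} and \textbf{D2} then guarantee that $\widetilde{M\times M}/\tau$ is a compact smooth $2n$-manifold with boundary, while \textbf{D3} forces $\sys f$ to send its boundary into $U$. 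Independence of the particular $V_\Delta$ is precisely \autoref{diag-choice}, and compatibility of the family with the transition maps $\mathcal N_{2n}(\sys X,U')\to\mathcal N_{2n}(\sys X,U)$ for $U'\subset U$ follows by choosing nested diagonal neighbourhoods and re-applying \autoref{diag-choice}.

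For the second, let $(W,\partial W;F)$ be a bordism between $(M_0,\partial M_0;f_0)$ and $(M_1,\partial M_1;f_1)$ and fix $U$. I apply \autoref{diagonal} directly to $W$ with target $(F\times F)^{-1}(pr^{-1}(U))$, obtaining a symmetric $V_\Delta\subset W\times W$ satisfying \textbf{D1}--\textbf{D4}. By \textbf{D4} the restrictions $V_\Delta\cap(M_i\times M_i)$ satisfy the analogous conditions and are therefore legitimate choices in computing $\sys{[M_i,\partial M_i;f_i]}$. What remains is to exhibit a $(2n+1)$-dimensional singular manifold in $(\sys X,U)$ whose boundary decomposes as $\widetilde{M_0\times M_0}/\tau\cup\widetilde{M_1\times M_1}/\tau\cup B'$ with $\sys F(B')\subset U$; this will provide the required bordism in $\mathcal N_{2n}(\sys X,U)$, and passing to the limit yields the asserted equality in $\check{\mathcal N}_{2n}(\sys{(X,A)})$.

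Producing this bordism is the main obstacle. The space $\widetilde{W\times W}/\tau$ has dimension $2n+2$, one too large to serve as the bordism itself; its boundary, though of the correct dimension $2n+1$, is closed and decomposes into pieces arising from $M_0\times W$, $M_1\times W$, $M'\times W$ (and their $\tau$-partners) together with the inner boundary $\partial V_\Delta/\tau$. The pieces involving $M'$ map into $pr(X\times A\cup A\times X)\subset U$ automatically from $F(M')\subset A$, and the inner-boundary pieces lie near the diagonal and hence in $U$ by \textbf{D3}; however, the mixed terms $\widetilde{M_0\times M_1}/\tau$ have $\sys F$-image $pr(f_0(M_0)\times f_1(M_1))$, which need not be contained in $U$. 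To eliminate them I would employ the Morse function $g\colon W\to[0,1]$ supplied by \autoref{meinmorse}, with $g^{-1}(i)=M_i$ and regular on a collar of $\partial W$, and use $g$ (together with its $\tau$-symmetrised analogue on $W\times W$) to carve a $(2n+1)$-dimensional sub-bordism out of $\widetilde{W\times W}/\tau$ whose boundary is exactly $\widetilde{M_0\times M_0}/\tau$, $\widetilde{M_1\times M_1}/\tau$, and pieces that either contain a factor in $M'$ or lie near the diagonal. \autoref{straight} then smooths any corners produced in the process, and the underlying manifold-theoretic machinery (\autoref{double}, \autoref{8-5}) already used in the proof of \autoref{diagonal} underwrites the smoothness and symmetry required at every step.
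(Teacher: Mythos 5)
Your approach is essentially the paper's. You have correctly identified the two-part structure (that the family defines an element of the inverse limit, plus well-definedness on bordism classes), invoked \autoref{diagonal} and \autoref{diag-choice} for the first part, and for the second part zeroed in on the decisive idea: the mixed boundary pieces $\widetilde{M_0\times M_1}/\tau$ are the obstruction, and one kills them by using the Morse function $g$ of \autoref{meinmorse} to carve a $(2n+1)$-dimensional sub-bordism out of $W\times W$. That is exactly what the paper does.

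Where your sketch stops, the paper's proof does its real work. The ``carving'' is the fibre product $B=\{(x,y)\in W\times W : g(x)=g(y)\}=\bar g^{-1}(0)$ with $\bar g(x,y)=g(x)-g(y)$ (note $\bar g$ is $\tau$-\emph{anti}symmetric, which is precisely what makes $B$ a $\tau$-\emph{invariant} subset; your phrase ``$\tau$-symmetrised analogue'' is a slight misnomer but points at the right object). To make $B\smallsetminus\Delta_W$ a smooth $(2n+1)$-manifold with the correct boundary, the paper uses precisely the extra properties demanded in \autoref{meinmorse}: no critical points near $\partial W$, $g^{-1}(i)=M_i$, distinct critical values for $g$ and for $g_{|M'}$, and disjointness of these critical values. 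These force $0$ to be a regular value of $\bar g$ and of $\bar g_{|\partial(W\times W)}$ away from $\Delta_W$, which is how the mixed terms disappear and the boundary splits as $\partial\tilde B=\widetilde{M_0\times M_0}\cup\widetilde{M_1\times M_1}\cup\tilde M$. The paper then carries out a fairly long boundary decomposition (its equations (\ref{rand}), (\ref{eq:restrandschnitt}), (\ref{rand2}), (\ref{restrand})) before dividing by $\tau$, using that $\tau$ acts freely and properly discontinuously once the diagonal neighbourhood is removed. Your outline is compatible with all of this and nothing in it would fail, but the proposal as written leaves these verifications at the level of ``the machinery underwrites the smoothness and symmetry required'' rather than actually establishing the regularity and boundary identities that make $\tilde B/\tau$ the desired bordism.
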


\begin{proof}
 Let $(M_0,\partial M_0;f_0)$ and $(M_1,\partial M_1;f_1)$ be two bordant singular $n$-manifolds. Furthermore, let $(W,\partial W; F)$ be a bordism between them, which means
\begin{enumerate}
			\item $W$ is a compact $(n+1)$-manifold with boundary.
			\item It is $\partial W= M_0\cup M_1\cup M^{\prime}$ with $\partial M^{\prime}=\partial M_0\sqcup \partial M_1$ and $M_i\cap M^{\prime}=\partial M_i$ for $i=0, 1$.
			\item $F_{|{M_i}}=f_i$ for $i=0, 1$.
			\item $F(M^{\prime})\subset A$.
\end{enumerate}
To show that the map in question is well-defined, we need to show that bordant elements are mapped to bordant elements. To be bordant in $\check{\mathcal{N}}_{2n}\left(\sys{(X,A)}\right)$ means to be bordant in each level of the inverse limit, i.e. in each bordism group of the limit related to one neighbourhood of the diagonal in $X\times X$. So we fix one open neighbourhood $U_{\Delta}$ of the diagonal $\Delta_X$ in $X\times X$ and look at $\widetilde{M_i \times M_i}$ with respect to this neighbourhood $(i=0, 1)$.\\
A singular bordism between $(\widetilde{(M_0 \times M_0)}/\tau,\partial(\widetilde{(M_0 \times M_0)}/\tau);\sys{{f_0}}_{|\widetilde{{(M_0 \times M_0)}/\tau}})$ and \linebreak $(\widetilde{(M_1 \times M_1)}/\tau,\partial(\widetilde{(M_1 \times M_1)}/\tau);\sys{{f_1}}_{|\widetilde{({M_1 \times M_1})/\tau}})$ is now to be constructed, valid in the group $\mathcal{N}_{2n}(\sys{X},pr(X\times A\cup A \times X \cup U_{\Delta}))$. This will be done in several steps and it will be done using the above given bordism $(W,\partial W;F)$. We start by constructing a bordism between the squared manifolds disregarding the action of the involution $\tau$. For this we use a Morse function and look at the fibred product with respect to this function as a subset of $W\times W$. Then we analyse the constructed bordism and observe that dividing it by the involution $\tau$ gives the desired bordism between the symmetric squared manifolds.\\
So we first choose a Morse function $f\colon W\rightarrow\left[0,1\right]$, as in \autoref{meinmorse}, with the following properties
\begin{enumerate}
 \item $f$ does not have any critical points in a neighbourhood of $\partial W$,
 \item $f^{-1}(i)=M_i$ for $i=0,1$,
 \item $f_{|M^\prime}$ is a Morse function on the triad $(M^\prime;\partial M_0,\partial M_1)$,
 \item $f$ takes distinct values at distinct critical points of $f$ and additionally no critical point of $f_{|M^\prime}$ has the same value as any critical point of $f$.
\end{enumerate}
Now we form the fibred product $B$ with respect to this map, i.e.  
\[
B=\left\{(x,y)\in W\times W|f(x)=f(y)\right\}.
\]This can as well be interpreted as an inverse image of the map 
\begin{align*}\bar{f}\colon W\times W&\rightarrow\left[-1,1\right],\text{ defined by }\\
(x,y)&\mapsto f(x)-f(y), \text{ namely } B=\bar{f}^{-1}(0).                                                                  
\end{align*}
 Consider now first the (non-compact) set $B\smallsetminus \Delta_W$. It will be shown that this consists only of regular points for $\bar{f}$ and $\bar{f}_{|\partial(W\times W)}$, so\footnote{See Proposition VIII.5.2 in \cite{3110162369} or the Preimage Theorem in section 1.4 of \cite{MR0348781}. This theorem is sometimes called Regular Value Theorem or Submersion Theorem as well. It will be used in various versions in the following.} $B\smallsetminus \Delta_W$ has the structure of a smooth (non-compact) $(2n+1)$-manifold with boundary and its boundary\footnote{To avoid complicated notation, we speak of the boundary of $B$ and write $\partial B$ although $B$ is not a manifold everywhere.} is $\partial B=B\cap\partial(W \times W)$.\\
Regularity of the map $\bar{f}$ is given at a point $(x,y)$ if and only if at least one of the points $x$ or $y$ in $W$ is a regular point for $f$. For $x\neq y$, which are both critical for f, it follows from the fourth condition above, that $f(x)\neq f(y)$, so $(x,y)\notin B$, which shows that $B\smallsetminus\Delta_W$ only consists of regular points for $\bar{f}$.\\ To obtain the fact that the boundary of $B$ is just the intersection of the boundary of $W\times W$ with $B$, we need to show that the restriction $\bar{f}_{|\partial{(W\times W)}}$ to the boundary of $W\times W$ is regular on $(B\cap\partial(W \times W))\smallsetminus \Delta_W$. So let now $x\neq y$ with $(x,y)\in\partial(W\times W)$ and without loss of generality let $x\in\partial W$. The first assumption for $f$ then tells us that $x$ is regular for $f$. Thus, for $(x,y)$ being a candidate for a critical point for $\bar{f}_{|\partial(W\times W)}$ it is necessary that $x$ is critical for $f_{|\partial W}$ and $y$ is critical for $f$. It would follow from the first property of $f$ again that $y\notin \partial W$, if (x,y) was critical in this context. The second and fourth condition above together then would yield $f(x)\neq f(y)$. This means that $(x,y)\notin B$ if $(x,y)$ was critical.\\
As in the proof for the existence of a neighbourhood of the diagonal\footnote{See \autoref{diagonal}.} that has a complement which is a smooth compact manifold with boundary, it can thus be shown that $B$ reduced by the above chosen neighbourhood of the diagonal $\Delta_W$ in $W\times W$ is a smooth compact manifold with boundary of dimension $2n+1$. To use this proposition here, you have to choose a regular value for the (locally smooth) function that represents the squared distance from the diagonal on $W \times W$ and $B$ simultaneously, which is possible because of Sard's Theorem.\footnote{See for example Chapter 2 in \cite{MR0226651}.}\\From now on let $\tilde{B}$ denote this manifold, which in fact is even more than that, it is a bordism.
\begin{claim}\footnote{The way we chose the neighbourhood of the diagonal depending on $U_\Delta$ does not change the bordism class of  $(\tilde{B}, \partial \tilde{B}; (F \times F)_{|\tilde{B}})$, compare \autoref{diag-choice}.}
The singular $2n-$manifold $\left(\widetilde{M_0 \times M_0},\partial(\widetilde{M_0 \times M_0});(f_0 \times f_0)_{|\widetilde{M_0 \times M_0}}\right)$ and the singular $2n-$manifold $\left(\widetilde{M_1 \times M_1},\partial(\widetilde{M_1 \times M_1});(f_1 \times f_1)_{|\widetilde{M_1 \times M_1}}\right)$ are bordant in $\mathcal{N}_{2n}(X\times X, X\times A\cup A \times X\cup U_{\Delta})$ via the singular manifold $(\tilde{B}, \partial \tilde{B}; (F \times F)_{|\tilde{B}})$.
\end{claim}
 For this claim it is left to be checked that
\begin{enumerate}
\renewcommand{\labelenumi}{(\alph{enumi})}
	\item $\partial \tilde{B}=\widetilde{(M_0 \times M_0)}\,\cup\, \widetilde{(M_1 \times M_1)}\,\cup\, \tilde{M}$ and $\partial \tilde{M}=\partial\left(\widetilde{M_0 \times M_0} \right)\,\sqcup\, \partial\left(\widetilde{M_1 \times M_1} \right)$ and $\left(\widetilde{M_i \times M_i} \right)\,\cap \tilde{M}=\partial\left(\widetilde{M_i \times M_i} \right)$ for $i=0, 1$.
	\item $(F \times F)_{|\widetilde{(M_i \times M_i)}}=(f_i \times f_i)_{|{\widetilde{(M_i \times M_i)}}}$ for $i=0, 1$.
	\item $(F \times F)(\tilde{M})\subset X\times A\,\cup\,A \times X\,\cup\,U_{\Delta}$.
\end{enumerate}
To prove this claim, we go on in two steps. Since the diagonal of $W\times W$ can be cut out in a way fitting nicely with submanifolds of $W\times W$ as is shown in \autoref{diagonal}, it is easier and possible to analyse all appearing submanifolds including the neighbourhood of the diagonal as a first step. Secondly, we will then look at what happens to the accomplished results as soon as the neighbourhood of the diagonal is cut out. We will see that because of the careful way we chose the neighbourhood of the diagonal, everything will work out fine after cutting the diagonal out.\\
To show a version of (a) including the neighbourhood of the diagonal first, we use the result about the boundary of $B$ that we have shown above and start computing.
	\begin{align}\label{rand}
\begin{split}
	\partial B =& \bar{f}^{-1}(0)\cap\partial(W \times W)\\
	                =& \bar{f}^{-1}(0)\cap\left(\partial W \times W\cup W \times \partial W\right)\\
	                =& \bar{f}^{-1}(0)\cap\Big(M_0 \times M_0\cup M_1 \times M_1\cup M^{\prime} \times                    W\cup W \times M^{\prime}\\
	                 &\bigcup\limits_{i=0, 1}\left(M_i \times \left(W\smallsetminus M_i\right)\cup \left(W\smallsetminus M_i\right) \times M_i\right)\Big)	\\
	                =&M_0 \times M_0\cup M_1 \times M_1\cup\left(\bar{f}^{-1}(0)\cap\left(M^{\prime} \times                             W\cup W \times M^{\prime}\right)\right), \end{split}
  	\end{align}
		since from $f^{-1}(i)=M_i$ for $i=0,1$, it follows that $M_0 \times M_0\cup M_1 \times M_1\subset \bar{f}^{-1}(0)$ and $\bar{f}^{-1}(0)\cap\Big(\bigcup\limits_{i=0, 1}\left(M_i \times \left(W\smallsetminus M_i\right)\right)\Big) =\emptyset$ for $i=0,1$.\\
		The properties of $\bar{M}:=\left(\bar{f}^{-1}(0)\cap\left(M^{\prime} \times W\cup W \times M^{\prime}\right)\right)$ need to be investigated next:
	\begin{align}\label{eq:restrandschnitt}\notag
   	\bar{M}\cap M_i \times M_i&=M_i \times M_i\cap(W \times M^{\prime}\cup M^{\prime} \times W) &&     \text{ by } M_i \times M_i\subset \bar{f}^{-1}(0)\\ \notag
   	 &= M_i \times \partial M_i\cup \partial M_i \times M_i &&\text{ by } M_i\cap M^{\prime}=\partial M_i  \\    & = \partial\left(M_i \times M_i\right) && 
   	\end{align}
   	and that is one of the properties of $\bar{M}$ which we have to show. For the other one, the boundary of $\bar{M}$ has to be examined. To make it easier to look at $\partial\bar{M}$, we remark the following equalities that all follow from the fact $M^{\prime}\subset W$ or from $M^{\prime}\subset \partial W$:
\begin{align*}
    M^{\prime} \times M^{\prime}&=W \times M^{\prime}\cap M^{\prime} \times W\\
   \partial(M^{\prime} \times W)\cap\partial(W \times M^{\prime})&=(\partial M^{\prime} \times W \cup M^{\prime} \times \partial W)\cap (\partial W \times M^{\prime}\cup W \times \partial M^{\prime})\\
  &=\partial M^{\prime}\times M^{\prime}\cup\partial M^{\prime}\times \partial M^{\prime}\cup M^{\prime}\times M^{\prime}\cup M^{\prime}\times\partial M^{\prime}\\
&=M^{\prime}\times M^{\prime}
\end{align*}
We therefore altogether have that 
\begin{align*}
  M^{\prime} \times M^{\prime}&=W \times M^{\prime}\cap M^{\prime} \times W&&\\
    &=\partial(M^{\prime} \times W)\cap\partial(W \times M^{\prime}).&&
  \end{align*}
We can assume\footnote{This is possible since everything takes place inside the manifold $W \times W$  and the submanifolds are sufficiently regular.} here that the boundary of the union of manifolds with boundary, which intersect only in the intersection of their boundaries, is the union of their boundaries with the (interiors of\footnote{We use $(-)^{\circ}$ to denote the interior, more precisely the notation means that what is between the brackets is to be regarded without its boundary.}) the intersections removed. This together with the above is what can be used to compute $\partial(W \times M^{\prime}\cup M^{\prime} \times W)$:
\begin{align*}
\begin{split}
  &\partial(W \times M^{\prime}\cup M^{\prime} \times W)\\  
  =&\left(\partial(W \times M^{\prime})\cup\partial(M^{\prime} \times W)\right)\smallsetminus\left(M^{\prime} \times M^{\prime}\right)^{\circ}\\
  =&\left(\left(\partial(W \times M^{\prime})\cup\partial(M^{\prime} \times W)\right)\smallsetminus\left(M^{\prime} \times M^{\prime}\right)\right)\cup \partial(M^\prime\times M^\prime),
\end{split}
\end{align*}
because $\partial(M^\prime\times M^\prime)\subset \partial(W \times M^{\prime})\cup\partial(M^{\prime} \times W)$ and therefore
\begin{align}\label{rand2}
\begin{split}
&\partial(W \times M^{\prime}\cup M^{\prime} \times W)\\
  =&\left((\partial W\times M^{\prime}\cup M^{\prime}\times\partial W\cup W\times\partial M^{\prime}\cup \partial M^{\prime}\times W)\smallsetminus\left(M^{\prime} \times M^{\prime}\right)\right)\cup \partial(M^\prime\times M^\prime) \\
  =&M_0^\circ\times M^{\prime}\cup M_1^\circ\times M^{\prime}\cup M^{\prime}\times M_0^\circ \cup M^{\prime}\times M_1^\circ\cup  (W\smallsetminus M^{\prime})\times \partial M_0 \cup\\& (W\smallsetminus M^{\prime})\times \partial M_1
\cup\partial M_0\times (W\smallsetminus M^{\prime})\cup \partial M_1\times (W\smallsetminus M^{\prime})\cup \partial(M^\prime\times M^\prime)\\
  =&M_0\times M^{\prime}\cup M_1\times M^{\prime}\cup M^{\prime}\times M_0 \cup M^{\prime}\times M_1\cup  (W\smallsetminus M^{\prime})\times \partial M_0 \cup\\& (W\smallsetminus M^{\prime})\times \partial M_1
\cup\partial M_0\times (W\smallsetminus M^{\prime})\cup \partial M_1\times (W\smallsetminus M^{\prime})
\end{split}
  \end{align}
where the identities $\partial W= M_0\cup M_1\cup M^{\prime}$ and $\partial M^{\prime}=\partial M_0\sqcup \partial M_1$ and $M_i\cap M^{\prime}=\partial M_i$ for $i=0, 1$  are used. Now that we know how the boundary of $W \times M^{\prime}\cup M^{\prime} \times W$ looks like, a similar analysis as before can be made to conclude that $\bar{f}_{|W \times M^{\prime}\cup M^{\prime} \times W}$ and $\bar{f}_{|\partial(W \times M^{\prime}\cup M^{\prime} \times W)}$ are regular on $B$. From this it follows that
\begin{align}\label{restrand}
\begin{split}
\partial\bar{M}&=\partial(B\cap(W \times M^{\prime}\cup M^{\prime} \times W))\\
&=B\cap \partial(W \times M^{\prime}\cup M^{\prime} \times W)\\
&=M_0\times\partial M_0\cup M_1\times\partial M_1\cup\partial M_0\times M_0\cup \partial M_1\times M_1.
\end{split}
\end{align}
The last step is deduced from (\ref{rand2}) together with the identities $f^{-1}(i)=M_i$ and $M_i\cap M^{\prime}=\partial M_i$ for $i=0, 1$.\\
As announced earlier, we will now look at these results again and try to explain what happens in the equations as soon as we cut out a neighbourhood of the diagonal. For this purpose, a notation is needed to indicate the resulting new boundary parts that arise from removing the neighbourhood of  the diagonal from $W\times W$ in the way described in \autoref{diagsec}. Using the notation from the proof of \autoref{diagonal} on page \pageref{diagonal}, let $N:=(R^2)^{-1}(\delta)\cap W\times W$, which gives
\[
\partial(\widetilde{W\times W})=\widetilde{W\times\partial W}\cup\widetilde{\partial W\times W}\cup N \text{ with } \partial N=\partial(W\times W)\cap N.
\] The way the neighbourhood of the diagonal was chosen, namely property \textbf{D4}, now allows the conclusion that for submanifolds of $W\times W$ their new boundary is always $N$ intersected with the submanifold. Therefore we deduce from the computations above
\begin{align*}
\begin{split}
 \partial\tilde{B}=&\widetilde{M_0\times M_0}\cup\widetilde{M_1\times M_1}\cup(B\cap(\widetilde{M^{\prime}\times W} \cup\widetilde{W\times M^{\prime}}))\cup(B\cap N)\text{ so }\\
\tilde{M}:=&(B\cap(\widetilde{M^{\prime}\times W}\cup\widetilde{W\times M^{\prime}}))\cup(B\cap N)\text{ which gives }\\
\partial(\widetilde{M_i\times M_i}) =&\tilde{M}\cap(\widetilde{M_i\times M_i}) \text{ using (\ref{eq:restrandschnitt}). }\\ 
\end{split}
\end{align*}
This already shows two thirds of statement (a) in the claim we are about to prove. The same reasoning turns the corresponding equation (\ref{restrand}) into 
\begin{multline*}
\partial(B\cap(\widetilde{W\times M^{\prime}}\cup\widetilde{M^{\prime}\times W}))=\\
\widetilde{M_0\times\partial M_0}\cup\widetilde{ M_1\times\partial M_1}\cup\widetilde{\partial M_0\times M_0}\cup\widetilde{ \partial M_1\times M_1}\cup(B\cap N\cap (W\times M^{\prime}\cup M^{\prime}\times W)).
\end{multline*}
This can be used to compute $\partial\tilde M$, but more information is needed. Namely, observe that $B\cap N$ denotes the new boundary of $B$ and the way we chose $N$ means that the boundary of the new boundary of $B$ is equal to the new boundary of the boundary of $B$. In symbols $\partial(B\cap N)=\partial B\cap N$, so from (\ref{rand}) we get:
\[
\partial(B\cap N)=
\bigcup_{i=0,1}(N\cap M_i\times M_i)\cup (B\cap N\cap (W\times M^{\prime}\cup M^{\prime}\times W)). 
\]
Again using the nice behaviour of the boundary of the union of manifolds only intersecting in their boundaries we finally conclude
\begin{align*}
\begin{split}
\partial\tilde{M} =& \left(\partial\left(B\cap\left(\widetilde{W\times M^{\prime}}\cup\widetilde{M^{\prime}\times W}\right)\right)\cup\partial\left(B\cap N\right)\right)\\
&\smallsetminus \left(B\cap N\cap \left(W\times M^{\prime}\cup M^{\prime}\times W\right)\right)\\
=& \bigcup_{i=0,1}(N\cap M_i\times M_i)\cup\widetilde{M_0\times\partial M_0}\cup\widetilde{ M_1\times\partial M_1}\cup\widetilde{\partial M_0\times M_0}\cup\widetilde{ \partial M_1\times M_1} \\
=&\partial(\widetilde{M_0\times M_0})\cup\partial(\widetilde{M_1\times M_1})
\end{split}
\end{align*} which implies (a) by the special way $N$ was chosen.\\
That the restriction of $F \times F$ to $\widetilde{M_i \times M_i}$ is equal to $f_i \times f_i$ for $i=0, 1$ follows directly from the corresponding property of $(W,\partial W;F)$ as a bordism between $(M_0,\partial M_0;f_0)$ and $(M_1,\partial M_1;f_1)$, which yields (b).\\
For (c) it has to be shown that $\tilde{M}=\left(B\cap\left(\widetilde{M^{\prime} \times W}\cup\widetilde{W \times M^{\prime}}\cup N\right)\right)$ is mapped to $X\times A\cup A \times X\cup U_{\Delta}$ under $(F \times F)_{|\tilde{B}}.$ That $N$ is mapped to $U_{\Delta}$ is assured by \textbf{D3} and because of the property $F(M^{\prime})\subset A$ of the given bordism $(W,\partial W;F)$, the desired statement is true.\\
There is actually an easier bordism between the two manifolds above given by the fact that for two bordant manifolds $P$ and $Q$ the cartesian products $P \times P$ and $Q \times Q$ are bordant as well. It is given in the following way: Let $V$ denote a bordism between the two manifolds $P$ and $Q$. Then $P\times P$ is bordant to $Q\times P$ via $V\times P$, whereas $Q\times P$ is bordant to $Q\times Q$ via $Q\times V$, so gluing together these two bordisms along $Q\times P$ gives a bordism as claimed.\footnote{This works in the oriented case as well.} The problem with this easier bordism is that there is no a priori way to build the quotient of it by $\tau$.\\
However, $\tilde{B}$ was chosen in a symmetric way, i.e. such that the involution $\tau$ maps $\tilde{B}$ to itself. This can be seen firstly as $B$ was chosen as the subset of all $(x,y)$ in $W\times W$ satisfying $f(x)=f(y)$ for a chosen Morse function. So as soon as $(x,y)$ is contained in $B$, the point $(y,x)$ is as well, which makes $B$ symmetric. Additionally, the neighbourhood of the diagonal that is cut out from $B$ had to be chosen very carefully using \autoref{diagonal} as was noted earlier. This makes it possible to build the quotient of $\tilde{B}$ by $\tau$ in a sensible way, which is the main reason why the constructed bordism works as claimed:
\begin{claim}
The bordism $(\bar{B},\partial \bar{B};\sys{F}_{|\bar{B}})$ where $\bar{B}:=\tilde{B}/\tau$ is a bordism as required.
\end{claim}
There are four points to be checked. They are
\renewcommand{\labelenumi}{(\alph{enumi}')}
\begin{enumerate}
	\item $\bar{B}$ is a compact $(2n+1)$-manifold with boundary.
	\item $\partial \bar{B}=\widetilde{(M_0 \times M_0)}/\tau\,\cup\, \widetilde{(M_1 \times M_1)}/\tau\,\cup\, M$ with $\partial M=\partial\left(\widetilde{(M_0 \times M_0)}/\tau\right)\,\sqcup\, \partial\left(\widetilde{(M_1 \times M_1)}/\tau\right)$ and $\left(\widetilde{(M_i \times M_i)}/\tau\right)\,\cap M=\partial\left(\widetilde{(M_i \times M_i)}/\tau\right)$ for $i=0, 1$.
	\item $\sys{F}_{|{\widetilde{(M_i \times M_i)}/\tau}}=\sys{{f_i}}_{|{\widetilde{(M_i \times M_i)}/\tau}}$ for $i=0, 1$.
	\item $\sys{F}(M)\subset pr(X\times A)\,\cup\, pr(U_{\Delta})$
\end{enumerate}
\begin{itemize}
	\item[ad (a'):] We already know, that $\tilde{B}$ is a manifold and the action of $\tau$ on this manifold is properly discontinuous because $\tilde{B}$ does not contain any points of the form $(x,x)$, which means that $\tau$ has no fixed points. Therefore $\bar{B}$ is a smooth compact $(2n+1)$-manifold with boundary. \footnote{See Theorem 1.88 in \cite{MR2088027}.}
	\item[ad (b'):] It is $\partial \bar{B}=\partial (\tilde{B}/\tau)=\partial \tilde{B}/\tau$, since the charts of $\bar{B}$ arise from the charts of $\tilde{B}$ by carefully composing them with the projection $pr\colon\tilde{B}\rightarrow \bar{B}$ . So (b') follows from (b) by defining $M:=\tilde{M}/\tau$.
	\item[ad (c'):] This follows from (c) using the definition of $\sys{F}$ and $\sys{{f_i}}$ for $i=0, 1$.
	\item[ad (d'):] With $M$ defined as above, this follows from (d).
\end{itemize}
\end{proof}
Furthermore, we can prove that symmetric squaring is natural in the following sense.
\begin{lem}[Naturality of the construction]\label{unnat}
Let $n\in \mathbb N$, let $(X,A)$ and $(Y,B)$ be pairs of topological spaces and let $g\colon (Y,B)\rightarrow (X,A)$ be a continuous map. Then the following diagram is commutative
\[
\begin{xy}\xymatrixcolsep{3.5pc}
\xymatrix{
\mathcal{N}_n(Y,B) \ar[r]^{\sys{\cdot}} \ar[d]_{\mathcal{N}(g)} & \check{\mathcal{N}}_{2n}(\sys{(Y,B)})\ar[d]_{\check{\mathcal{N}}_{2n}(\sys{g})} \\
\mathcal{N}_n(X,A) \ar[r]_{\sys{\cdot}} & \check{\mathcal{N}}_{2n}(\sys{(X,A)})
}
\end{xy}
\]

\end{lem}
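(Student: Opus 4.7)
The plan is a straightforward diagram chase, where the only genuine content is reconciling the two different choices of diagonal neighbourhoods that arise in the two compositions. Pick a class $[M,\partial M; f]\in \mathcal{N}_n(Y,B)$. Going down-then-right produces the bordism class $[M,\partial M; g\circ f]\in \mathcal{N}_n(X,A)$ and then its symmetric square, an element of $\check{\mathcal{N}}_{2n}(\sys{(X,A)})$ whose component at an open neighbourhood $U$ of $pr(X\times A\cup A\times X\cup \Delta_X)$ is the bordism class of $(\widetilde{M\times M}/\tau;\sys{(g\circ f)}|)$ for some admissible neighbourhood of $\Delta_M$ chosen so that $\sys{(g\circ f)}$ maps it into $U$. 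Going right-then-down first produces the element of $\check{\mathcal{N}}_{2n}(\sys{(Y,B)})$ whose component at each open neighbourhood $U'\supset pr(Y\times B\cup B\times Y\cup \Delta_Y)$ is $[\widetilde{M\times M}/\tau; \sys{f}|]$; applying $\check{\mathcal{N}}_{2n}(\sys{g})$ then yields, by the construction of the induced map in the proof of Definition and Proposition \ref{cechbordism}, the element whose $U$-component is $\mathcal{N}_{2n}(\sys{g})$ applied to the $\sys{g}^{-1}(U)$-component.

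Fix such a $U$. The plan is to compute this $U$-component in both paths and compare. In the right-then-down path, I would choose an admissible diagonal neighbourhood $V_\Delta^{(1)}\subset M\times M$ (as furnished by Proposition \ref{diagonal}) such that $\sys{f}$ carries its projection into $\sys{g}^{-1}(U)$; the $U$-component is then represented by the singular $2n$-manifold $(\widetilde{M\times M}^{(1)}/\tau;\sys{g}\circ\sys{f}|)$. In the down-then-right path, I would pick a neighbourhood $V_\Delta^{(2)}$ whose projection is mapped by $\sys{(g\circ f)}$ into $U$, giving the representative $(\widetilde{M\times M}^{(2)}/\tau;\sys{(g\circ f)}|)$. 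Because $\sys{g}\circ\sys{f}=\sys{(g\circ f)}$ as maps, the two singular manifolds are genuinely the same up to the choice of diagonal neighbourhood.

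The main (and only) obstacle is to identify these two bordism classes in $\mathcal{N}_{2n}(\sys{X},U)$. This is exactly the content of Remark \ref{diag-choice}: different admissible choices of a neighbourhood of the diagonal yield bordant symmetric squared singular manifolds in the relevant relative bordism group, provided the smaller common refinement is itself admissible and is mapped into $U$. Since both $V_\Delta^{(1)}$ and $V_\Delta^{(2)}$ satisfy the latter condition (the first because $\sys{g}(\sys{g}^{-1}(U))\subset U$, the second by construction), I may pass to a common admissible refinement and apply Remark \ref{diag-choice} twice to conclude the two representatives are bordant.

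Finally, since the $U$-components agree for every $U$ in the directed system defining $\check{\mathcal{N}}_{2n}(\sys{(X,A)})$, the two elements of the inverse limit coincide, proving commutativity of the diagram. The argument requires no new geometric input beyond the admissibility analysis already carried out for Theorem \ref{halfun}; I do not anticipate any substantive difficulty beyond the careful bookkeeping of diagonal neighbourhoods outlined above.
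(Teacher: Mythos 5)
Your proof is correct and rests on the same key observation as the paper's, namely that $\sys{(g\circ f)}=\sys{g}\circ\sys{f}$ as maps on $\widetilde{M\times M}/\tau$. You are somewhat more explicit than the paper about reconciling the two \emph{a priori} different choices of admissible diagonal neighbourhoods via \autoref{diag-choice}; the paper elides this by writing the same $\widetilde{(M\times M)}$ in both limit expressions, a point your version handles more carefully but which does not change the substance of the argument.
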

\begin{proof}
 For an arbitrary singular manifold $\left[M,\partial M, f\right]\in \mathcal{N}_n(Y,B)$, it has to be shown that
\[
\sys{\left(\mathcal{N}(g)\left[M,\partial M, f\right]\right)}=\check{\mathcal{N}}_{2n}(\sys{g})\left(\left\sys{[M,\partial M, f\right]}\right).
\]
Per definition, 
\begin{align*}
 \sys{\left(\mathcal{N}(g)\left[M,\partial M, f\right]\right)}&=\sys{\left[M, \partial M, g\circ f\right]}\\
&=\left\{\widetilde{(M \times M)}/\tau,\partial(\widetilde{(M \times M)}/\tau);\sys{(g\circ f)}_{|{\widetilde{(M \times M)}/\tau}} \right\}_{U_i\supset \Delta_X \text{open}}
\end{align*} and

\begin{align*}
 \check{\mathcal{N}}_{2n}(\sys{g})\left(\left\sys{[M,\partial M, f\right]}\right)&= 
\check{\mathcal{N}}_{2n}(\sys{g})\left\{\widetilde{(M \times M)}/\tau,\partial(\widetilde{(M \times M)}/\tau);\sys{f}_{|{\widetilde{(M \times M)}/\tau}} \right\}_{U_i\supset \Delta_X}\\
&=\left\{\widetilde{(M \times M)}/\tau,\partial(\widetilde{(M \times M)}/\tau);\sys{g}\circ\sys{f}_{|{\widetilde{(M \times M)}/\tau}} \right\}_{U_i\supset \Delta_X\text{open }}.
\end{align*}
Since the maps
\[
\sys{(g\circ f)}\text{ and }\sys{g}\circ \sys{f} \colon \widetilde{(M \times M)}/\tau,\partial(\widetilde{(M \times M)}/\tau)\rightarrow \sys{(X,A)}
\] are equal, the lemma follows.
\end{proof}

\subsection{The oriented case}\label{orcase}

%

The aim of this section is to show that an analogous statement to the main \autoref{halfun} holds in the case of oriented bordism. For this to be true, we have to restrict attention to even dimensions since the involution $\tau(x,y)=(y,x)$ applied to the product $M\times M$ of an oriented manifold $M$ is orientation preserving if the dimension of $M$ is even and orientation inverting if the dimension of $M$ is odd.\footnote{See Proposition 2.8 in \cite{diplom}.}. So only in the even cases $\widetilde{(M\times M)}/\tau$ is again a well-defined manifold with orientation.

\begin{thm}\label{halfor}
 Let $(X,A)$ be a pair of topological spaces and let $n\in\mathbb N$ be even. The map \[
\sys{(\hspace{0,5em}\cdot\hspace{0,5em})}\;\;\colon\Omega_n(X,A)\rightarrow\check{\Omega}_{2n}\left(\sys{(X,A)}\right)
\] defined by mapping the oriented singular manifold $(M,\partial M;f)$ to \[\sys{(M,\partial M;f)}:=\left\{\widetilde{(M \times M)}/\tau,\partial(\widetilde{(M \times M)}/\tau);\sys{f}_{|{\widetilde{(M \times M)}/\tau}} \right\}_{U_i\supset \Delta_X \text{open}},
\]
where each $\widetilde{M \times M}$ is chosen such that it satisfies conditions \textbf{D1}-\textbf{D4} with respect to one open neighbourhood $U_i$ of $\Delta_X$, is well defined.
\end{thm}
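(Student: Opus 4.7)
The plan is to run through the construction of Theorem \ref{halfun} essentially verbatim at the level of underlying smooth manifolds and then verify, as the single new ingredient, that all orientations are compatible. Fix an oriented bordism $(W,\partial W;F)$ between oriented singular $n$-manifolds $(M_0,\partial M_0;f_0)$ and $(M_1,\partial M_1;f_1)$, and fix an open neighbourhood $U_\Delta$ of the diagonal in $X\times X$. Choose a Morse function $f\colon W\to[0,1]$ as provided by \autoref{meinmorse}, form the fibred product $B=\bar f^{-1}(0)\subset W\times W$ with $\bar f(x,y)=f(x)-f(y)$, and cut out a symmetric neighbourhood of the diagonal as in \autoref{diagonal} to obtain the smooth compact $(2n+1)$-manifold $\tilde B$. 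The set-theoretic analysis from the proof of \autoref{halfun} giving properties (a), (b), (c) and showing that $\tau$ acts freely on $\tilde B$ is purely topological and is reused without change; independence of the choice of diagonal neighbourhood still follows from \autoref{diag-choice}.

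The new content is to equip $\tilde B$ and its quotient $\bar B:=\tilde B/\tau$ with orientations. Since $W$ is oriented, so is $W\times W$, and because $0$ is a regular value of $\bar f$ and of its restrictions to the relevant strata, the level set $\tilde B$ inherits an orientation from $W\times W$ by the usual convention that an oriented frame of $T\tilde B$ followed by $\nabla\bar f$ yields the ambient orientation. I would then show that the involution $\tau$ acts on $\tilde B$ in an orientation-preserving fashion precisely under the hypothesis that $n$ is even. Indeed, $\tau\colon W\times W\to W\times W$ has local sign $(-1)^{(n+1)^2}=-1$ for even $n$ (the swap of two factors of odd dimension $n+1$), but simultaneously $\tau^\ast\bar f=-\bar f$, so the normal direction $\nabla\bar f$ is sent to its negative; the two sign changes cancel on $\tilde B$, making $\tau_{|\tilde B}$ orientation-preserving. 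Since $\tau$ acts freely on $\tilde B$, the quotient $\bar B$ acquires a canonical orientation.

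Next I would verify that, as an oriented bordism, $(\bar B,\partial \bar B;\sys F_{|\bar B})$ really realises an oriented bordism between $(\widetilde{M_0\times M_0}/\tau,\partial(-);\sys{f_0}_{|-})$ and $(\widetilde{M_1\times M_1}/\tau,\partial(-);\sys{f_1}_{|-})$ in $\Omega_{2n}(\sys X,\,pr(X\times A\cup A\times X\cup U_\Delta))$. The boundary decomposition $\partial\bar B=\widetilde{(M_0\times M_0)}/\tau\cup\widetilde{(M_1\times M_1)}/\tau\cup M$ is inherited from its unoriented analogue; the orientations on $\widetilde{(M_i\times M_i)}/\tau\subset\partial\bar B$ are computed via the boundary orientation convention, using $f^{-1}(i)=M_i$ and the fact that $\nabla\bar f$ points outwards on $M_1\times M_1$ and inwards on $M_0\times M_0$. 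Hence $\widetilde{(M_0\times M_0)}/\tau$ appears with its given orientation and $\widetilde{(M_1\times M_1)}/\tau$ with the reversed one, exactly as required by the definition of oriented bordism. Property (b) from \autoref{halfun} identifies the maps on these boundary pieces with $\sys{f_i}_{|-}$, and (c) ensures the remaining boundary part $M$ is mapped into the allowed subspace by $\sys F$.

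Finally, naturality in the pair $(X,A)$ is proved by the identical computation as in \autoref{unnat}, since $\sys{(g\circ f)}=\sys g\circ\sys f$ at the level of representatives and this identity is independent of orientations. The main obstacle, and the only place where evenness of $n$ enters in an essential way, is the orientation computation for $\tau_{|\tilde B}$; everything else is a direct transcription of the unoriented argument.
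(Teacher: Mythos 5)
Your proposal is correct and follows the paper's own proof: after reusing the unoriented construction of $\tilde{B}$ and $\bar{B}$, the two facts you identify as new -- that $\tau$ is orientation-preserving on $\tilde{B}$, and that the preimage orientation of $\tilde{B}$ induces the correct boundary orientations on $\widetilde{M_i\times M_i}$ -- are precisely the items (b) and (a) isolated in the paper, and your argument for (b) (the orientation reversal of $\tau$ on $W\times W$, with $n+1$ odd, cancels against $\tau^\ast\bar{f}=-\bar{f}$) is the paper's short-exact-sequence diagram argument rewritten in gradient language. One small imprecision in your sketch of (a): the phrase ``$\nabla\bar{f}$ points outwards on $M_1\times M_1$ and inwards on $M_0\times M_0$'' should be read as ``$d\bar{f}$ is positive on the outward normal $(n_x,0)$ of $W\times W$ along $M_1\times M_1$ and negative along $M_0\times M_0$,'' since $\nabla\bar{f}$ is normal to $\tilde{B}$ inside $W\times W$ rather than to $\partial\tilde{B}$ inside $\tilde{B}$. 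The paper's collar-coordinate computation keeps this preimage-orientation complement separate from the outward boundary normal $(n_x,n_y)\in T\tilde{B}$, and tracking both (together with the observation that shuffling $(n_x,n_y)$ to the front is orientation-preserving only because $n$ is even) is the remaining bookkeeping your sketch compresses.
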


\begin{proof}
 Let $(M_0,\partial M_0;f_0)$ and $(M_1,\partial M_1;f_1)$ be two bordant oriented singular $n$-manifolds. Furthermore, let $(W,\partial W; F)$ be a bordism between them, which means
\begin{enumerate}
			\item $W$ is a compact $(n+1)$-manifold with boundary.
			\item It is $\partial W= -M_0\cup M_1\cup M^{\prime}$ with $\partial M^{\prime}=-\partial M_0\sqcup \partial M_1$, $-M_0\cap M^{\prime}=-\partial M_0$ and $M_1\cap M^{\prime}=\partial M_1$.
			\item $F_{|{-M_0}}=f_0$ and $F_{|{M_1}}=f_1$.
			\item $F(M^{\prime})\subset A$.
\end{enumerate}
It has to be shown that for a given neighbourhood $U_\Delta$ of the diagonal $\Delta_X$ in $X\times X$ the oriented singular manifolds $(\widetilde{(M_0 \times M_0)}/\tau,\partial(\widetilde{(M_0 \times M_0)}/\tau);\sys{{f_0}}_{|\widetilde{{(M_0 \times M_0)}/\tau}})$ and $(\widetilde{(M_1 \times M_1)}/\tau,\partial(\widetilde{(M_1 \times M_1)}/\tau);\sys{{f_1}}_{|\widetilde{({M_1 \times M_1})/\tau}})$, chosen with respect to $U_\Delta$, are bordant in $\Omega_{2n}(\sys{X},pr(X\times A\cup A \times X \cup U_{\Delta}))$.\footnote{It was explained in \autoref{diagsec}, how these manifolds are chosen with respect to $U_\Delta$. \autoref{schlange}, which was introduced in the course of the construction, is used here.}  A large amount of work to prove this result has already been done in the proof of \autoref{halfun} because essentially the same construction works here. Namely, the manifold $\tilde{B}\subset W\times W$, which was chosen there as a fibre product of a certain Morse function, inherits an inverse image orientation satisfying the conditions needed to turn it into an oriented bordism between the oriented singular manifolds $\left(\widetilde{M_0 \times M_0},\partial(\widetilde{M_0 \times M_0});(f_0 \times f_0)_{|\widetilde{M_0 \times M_0}}\right)$ and $\left(\widetilde{M_1 \times M_1},\partial(\widetilde{M_1 \times M_1});(f_1 \times f_1)_{|\widetilde{M_1 \times M_1}}\right)$. Dividing these by $\tau$ as submanifolds of $W\times W$ gives a well-defined oriented bordism as desired.\\
In detail, there are only the following two facts to be proven since the rest follows then as in the proof of \autoref{halfun}. We use the same notation as in the cited proof.
\begin{itemize}
 \item[(a)] The manifold $\tilde{B}$ inherits an orientation which induces the orientations of $-(\widetilde{M_0\times M_0})$ and of $(\widetilde{M_1\times M_1})$ on its boundary.
\item[(b)] $\tilde{B}/\tau$ is a well-defined oriented manifold.
\end{itemize}
For (a), let $(x,y)\in \widetilde{M_0\times M_0}\cup\widetilde{M_1\times M_1}\subset\partial\tilde{B}.$ The boundary $\partial\tilde{B}$ inherits its orientation from an orientation of $\tilde{B}$. This was defined as an inverse image of a regular value of a certain function, so $\tilde{B}$ is oriented by an inverse image orientation depending on this function.\footnote{Consult Section 3.2 in \cite{MR0348781} to learn about the notion of inverse image orientation, called preimage orientation there.} Fortunately, the function in question was defined in a rather concrete and easy way in a neighbourhood of points in $\widetilde{M_0\times M_0}\cup\widetilde{M_1\times M_1}\subset\partial\tilde{B}.$ Namely, as constructed earlier in the proofs of \autoref{meinmorse} and \autoref{halfun},  $\tilde{B}=\bar{f}^{-1}(0)$, where $\bar{f}\colon W\times W\rightarrow\left[-1,1\right]$ is defined by  $(x,y)\mapsto f(x)-f(y)$  and $f\colon W\rightarrow [0,1]$  is defined in collaring-coordinates of $[0,1)\times\partial W\subset W$ as
  \[f(t,x)= \begin{cases}
         t & \text{ for } x\in M_0,\\	 
	 1-t & \text{ for } x\in M_1.	
         \end{cases}\]
For a point $x\in M_0\cup M_1$ the tangent space to $W$ can be written as $T_x W=N_x\oplus T_x(\partial W)$, where $N_x$ corresponds to the $t$-coordinate above. It follows that 
\[
d\bar{f}_{(x,y)}\colon T_{(x,y)}(W\times W)=T_x W\oplus T_y W=N_x\oplus T_x(\partial W)\oplus N_y\oplus T_y(\partial W)\rightarrow\mathbb R
\]can be computed as
\[
 (t_1,-,t_{n+2},-)\mapsto\begin{cases}
                      t_1-t_{n+2} & \text{ for } (x,y)\in \widetilde{M_0\times M_0}\\
		      (1-t_1)-(1-t_{n+2})=t_{n+2}-t_1 & \text{ for } (x,y)\in\widetilde{M_1\times M_1}.
                     \end{cases}
\]This difference in sign is crucial for the rest of the proof. Let $\left\{v_1^x,\ldots,v_n^x\right\}$ be a positive orientation of $M_0$ or $M_1$ depending on $x$ and let $n_x\in T_x W$ be an outward pointing vector, which here means that its $t$-coordinate is less than zero. Since by assumption $\partial W=-M_0\cup M_1\cup M^{\prime}$, it follows that $W$ is positively oriented by $\left\{n_x,-v_1^x,\ldots,v_n^x\right\}$ for $x\in M_0$ and by $\left\{n_x,v_1^x,\ldots,v_n^x\right\}$ for $x\in M_1$.  With an analogous notation for $y\in M_0\cup M_1$ we have that $W\times W$  is positively oriented by
\[
 \left\{(n_x,0),(-v_1^x,0),\ldots,(v_n^x,0),(0,n_y),(0,-v_1^y),\ldots,(0,v_n^y)\right\} \text{ for } (x,y)\in \widetilde{M_0\times M_0}
\]
\[
 \text{ or }  \left\{(n_x,0),(v_1^x,0),\ldots,(v_n^x,0),(0,n_y),(0,v_1^y),\ldots,(0,v_n^y)\right\}	\text{ for } (x,y)\in\widetilde{M_1\times M_1}.
\]
Replacing the vector $(0,n_y)$ by $(n_x,n_y)$ in the two bases does not change orientation and if we require $n_x$ and $n_y$ to have equal $t$-coordinates, then the resulting bases contain (not yet oriented) bases of $T_{(x,y)}\tilde{B}$, namely
\[
\left\{(-v_1^x,0),\ldots,(v_n^x,0),(n_x,n_y),(0,-v_1^y),\ldots,(0,v_n^y)\right\} \text{ for } (x,y)\in \widetilde{M_0\times M_0}
\] 
\[
 \text{ and }  \left\{(v_1^x,0),\ldots,(v_n^x,0),(n_x,n_y),(0,v_1^y),\ldots,(0,v_n^y)\right\}	\text{ for } (x,y)\in\widetilde{M_1\times M_1}.
\]
Viewing $(n_x,0)\in T_{(x,y)}(W\times W)$ gives an outward pointing vector $(n_x,0)$ with respect to $W\times W$ as well. For $(x,y)\in \widetilde{M_0\times M_0}$ the differential $d\bar{f}_{(x,y)}$ maps $(n_x,0)$ to  its $t$-coordinate whereas for $(x,y)\in \widetilde{M_1\times M_1}$ it is mapped to the negative of its $t$-coordinate. The image space of $d\bar{f}_{(x,y)}$, which is $\mathbb R$, is assumed to have standard orientation, so the inverse image orientation of the complement of $T_{(x,y)}\tilde{B}$ in $T_{(x,y)}(W\times W)$ is given by $(n_x,0)$ or $(-n_x,0)$ depending on which of them is mapped to a positive real number. Together with the fact that we know a positive orientation of $W\times W$ this gives positive orientations for $\tilde{B}$, namely
\[
\left\{(v_1^x,0),\ldots,(v_n^x,0),(n_x,n_y),(0,-v_1^y),\ldots,(0,v_n^y)\right\} \text{ for } (x,y)\in \widetilde{M_0\times M_0}
\] 
\[
 \text{ and }  \left\{(v_1^x,0),\ldots,(v_n^x,0),(n_x,n_y),(0,v_1^y),\ldots,(0,v_n^y)\right\}	\text{ for } (x,y)\in\widetilde{M_1\times M_1},
\]
since $(-n_x,0)$ is mapped to a positive real number by $d\bar{f}_{(x,y)}$ for $(x,y)\in \widetilde{M_0\times M_0}$ and $d\bar{f}_{(x,y)}(n_x,0)$ is positive for $(x,y)\in\widetilde{M_1\times M_1}$. The outward pointing vector $(n_x,n_y)$ can be swapped to the first position in the bases without changing orientation because $n$ is even. So on the boundary $\partial\tilde{B}$ the following orientations are induced:
\[
\left\{(v_1^x,0),\ldots,(v_n^x,0),(0,-v_1^y),\ldots,(0,v_n^y)\right\} \text{ for } (x,y)\in \widetilde{M_0\times M_0}
\] 
\[
 \text{ and }  \left\{(v_1^x,0),\ldots,(v_n^x,0),(0,v_1^y),\ldots,(0,v_n^y)\right\}	\text{ for } (x,y)\in\widetilde{M_1\times M_1},
\]which is exactly what needs to be shown for (a).
\\
For (b) it has to be shown that $\tau$ is orientation preserving on $\tilde{B}$. Let $(x,y)$ be an element of $\tilde{B}$ and suppose that the first row of the commutative diagram
\[
\begin{xy}\xymatrixcolsep{3.5pc}
 \xymatrix{
  0 \ar[r] & T_{(x,y)}\tilde{B}\ar[d]^{d\tau} \ar[r]^-{i} & T_{(x,y)}(W\times W) \ar[r]^-{d_{(x,y)}\bar{f}}\ar[d]^{d\tau} & \mathbb R\ar[r]\ar[d]^{\cdot(-1)} & 0 \\
  0 \ar[r] & T_{(y,x)}\tilde{B}\ar[r]^-{i} & T_{(y,x)}(W\times W)\ar[r]^-{d_{(y,x)}\bar{f}} & \mathbb R\ar[r] & 0
}
\end{xy}\label{1}
\]gives a positive orientation of $\tilde{B}$ at $(x,y)$. The orientation at the image under $\tau$ is then given by the second row of the preceding diagram. Since the dimension of $W$ is odd, we know that $\tau$ is orientation reversing on $W\times W$. From this together with the fact that the last vertical arrow in the diagram denotes the map that is multiplication with $(-1)$ it follows that the second row of the diagram induces a positive orientation on $\tilde{B}$ at $(y,x)$. But this means that $\tau$ is orientation preserving.

\end{proof}
As in the unoriented case, the symmetric squaring construction is natural in the oriented setting as well.
\begin{lem}[Naturality of the construction]\label{ornat}
Let $n\in \mathbb N$ be even, let $(X,A)$ and $(Y,B)$ be pairs of topological spaces and let $g\colon (Y,B)\rightarrow (X,A)$ be a continuous map. Then the following diagram is commutative
\[
\begin{xy}\xymatrixcolsep{3.5pc}
\xymatrix{
\Omega_n(Y,B) \ar[r]^{\sys{\cdot}} \ar[d]_{\Omega(g)} & \check{\Omega}_{2n}(\sys{(Y,B)})\ar[d]_{\check{\Omega}_{2n}(\sys{g})} \\
\Omega_n(X,A) \ar[r]_{\sys{\cdot}} & \check{\Omega}_{2n}(\sys{(X,A)})
}
\end{xy}
\]
\end{lem}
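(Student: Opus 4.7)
The plan is to mirror the argument of Lemma \ref{unnat} almost verbatim, since naturality is a formal consequence of the definitions and does not interact with orientations beyond what has already been arranged in Theorem \ref{halfor}.

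First I would fix an arbitrary class $[M,\partial M,f]\in\Omega_n(Y,B)$ and chase it around both sides of the diagram. Going down then right yields
\[
\sys{(\Omega(g)[M,\partial M,f])}=\sys{[M,\partial M,g\circ f]}=\left\{\widetilde{(M\times M)}/\tau,\partial(\widetilde{(M\times M)}/\tau);\sys{(g\circ f)}_{|\widetilde{(M\times M)}/\tau}\right\}_{U_i\supset \Delta_X},
\]
while going right then down produces the family
\[
\left\{\widetilde{(M\times M)}/\tau,\partial(\widetilde{(M\times M)}/\tau);\sys{g}\circ\sys{f}_{|\widetilde{(M\times M)}/\tau}\right\}_{U_i\supset \Delta_X},
\]
since $\check{\Omega}_{2n}(\sys{g})$ acts on a representative by postcomposition with $\sys{g}$ (the coherence of this postcomposition across the inverse system was already verified in the proof of Definition and Proposition \ref{cechbordism}).

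The key step is then the pointwise identity $\sys{(g\circ f)}=\sys{g}\circ\sys{f}$, which is immediate from the definition of the induced map on symmetric squares: for a class $[m_1,m_2]$ one has
\[
\sys{(g\circ f)}[m_1,m_2]=[g(f(m_1)),g(f(m_2))]=\sys{g}[f(m_1),f(m_2)]=\sys{g}(\sys{f}[m_1,m_2]).
\]
Since both families above are built from the same underlying oriented manifold representative, and their reference maps into $\sys{(X,A)}$ coincide by this identity, they define the same element of $\check{\Omega}_{2n}(\sys{(X,A)})$.

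I do not expect any real obstacle here: the restriction to even $n$ plays no role in the naturality argument itself, since it was only needed to make $\sys{(\cdot)}$ well-defined on oriented bordism classes in Theorem \ref{halfor}, and that has already been secured. The only bookkeeping that deserves mild care is that the neighbourhoods $U$ of $\sys{B}$ indexing the limit on the left and those of $\sys{A}$ indexing the limit on the right are related by preimages under $\sys{g}$; but this is precisely the compatibility that the functorial map $\check{\Omega}_{2n}(\sys{g})$ was constructed to encode via the universal property in Definition and Proposition \ref{cechbordism}, and together with Remark \ref{diag-choice} (which ensures independence of the particular choice of neighbourhood of the diagonal) it adds no further work.
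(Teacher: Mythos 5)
Your proposal is correct and coincides with the paper's intent: the paper's own proof of this lemma is the single line ``The proof is similar to the proof of \autoref{unnat},'' and your argument is exactly that diagram chase spelled out, using $\sys{(g\circ f)}=\sys{g}\circ\sys{f}$ and the functoriality of $\check{\Omega}$ via the inverse-limit universal property. Your added observation that the evenness of $n$ only matters for well-definedness (\autoref{halfor}) and not for naturality is accurate and harmless.
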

\begin{proof}
 The proof is similar to the proof of \autoref{unnat}.
\end{proof}

\section{Compatability via the orientation class}\label{sec:compat}

It was already mentioned in the first chapter that there exists a well-defined symmetric squaring map in \v{C}ech homology. In the preceding section we showed that there is a well-defined symmetric squaring map in \v{C}ech bordism as well. Furthermore, the well-known fundamental class transformation or fundamental class homomorphism\footnote{See for example \cite{MR548463} in Chapter I.6.} provides a transformation between bordism and homology. It is therefore natural to ask whether symmetric squaring is compatible with that transition. The answer is affirmative. Namely, in \autoref{compat} it will be shown that the diagram

\[
\begin{xy}\xymatrixcolsep{3.5pc}
\xymatrix{
\Omega_n(X,A) \ar[r]^{\mu} \ar[d]_{\sys{\cdot}} & H_n(X,A,\mathbb Z)\ar[d]_{\sys{\cdot}} \\
\check{\Omega}_{2n}(\sys{(X,A)}) \ar[r]_{\check{\mu}} & \check{H}_{2n}(\sys{(X,A)},\mathbb Z)
}
\end{xy}
\]
is commutative. Here $\mu$ is the fundamental class homomorphism and $\check{\mu}$ is induced by it in \v{C}ech homology. Note that the pair $(X,A)$ can be chosen to be a compact pair such that $X$ and $A$ are Euclidean Neighbourhood Retracts, which assures that \v{C}ech homology and bordism are isomorphic to singular homology and bordism respectively.\footnote{Compare \autoref{cechbord}.}

\subsection{Compatibility}
In \autoref{resultshom} we reviewed the most important properties of the homological symmetric squaring which will be used again in the following proof. The main ingredient in the argumentation will be that symmetric squaring maps fundamental classes to fundamental classes which was stated in \autoref{fundquadrat}.\\
The standard way of mapping from bordism to homology is the fundamental class homomorphism defined as follows.
\begin{defi}[fundamental class homomorphism]\label{fundhom}
  A passage from bordism to homology can be defined in the following way:
\begin{align*}
 \mu\colon\Omega_k(X,A)&\rightarrow H_k(X,A,\mathbb Z)\\
\left[M,\partial M; f\right]&\mapsto \mu(M,\partial M,f):=H_k(f)(\sigma_{\mathbf f}),
\end{align*}
where \begin{itemize}  \item $\sigma_{\mathbf f}\in H_k(M,\partial M,\mathbb Z)$ is the fundamental class and 
       \item $H_k(f)$ is the map which is induced by $f\colon (M,\partial M)\rightarrow (X,A)$ in homology.
      \end{itemize}
\end{defi}
The symmetric squaring maps in the two worlds of bordism and homology are compatible via the just defined fundamental class homomorphism.
\begin{prop}\label{compat}
Let $n\in\mathbb N$ be even and $(X,A)$ be a pair of topological spaces. Furthermore denote the fundamental class transformation by $\mu$. Then the diagram  
\[
\begin{xy}\xymatrixcolsep{3.5pc}
\xymatrix{
\Omega_n(X,A) \ar[r]^{\mu} \ar[d]_{\sys{\cdot}} & H_n(X,A,\mathbb Z)\ar[d]_{\sys{\cdot}} \\
\check{\Omega}_{2n}(\sys{(X,A)}) \ar[r]_{\check{\mu}} & \check{H}_{2n}(\sys{(X,A)},\mathbb Z)
}
\end{xy}
\]
is commutative.
\end{prop}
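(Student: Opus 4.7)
The plan is to verify the commutativity levelwise in the inverse limit defining $\check{H}_{2n}(\sys{(X,A)},\mathbb Z)$. Since this limit embeds into $\prod_U H_{2n}(\sys{X},U;\mathbb Z)$, taken over open neighbourhoods $U$ of $pr(X\times A\cup A\times X\cup\Delta)$, it suffices to verify for each class $[M,\partial M;f]\in\Omega_n(X,A)$ and each such $U$ that the $U$-components of the two images agree. Fix $U$ and, using \autoref{diagonal} together with \autoref{schlange}, pick a neighbourhood of the diagonal in $M\times M$ small enough that $\widetilde{M\times M}/\tau$ is a smooth compact oriented $2n$-manifold whose boundary is sent into $U$ by $\sys{f}$. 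Write $\sigma^V_{\mathbf f}$ for its ordinary fundamental class.

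Going through the lower path first: naturality of the classical fundamental class homomorphism $\mu$ with respect to inclusions implies that $\mu$ is compatible with all the bonding maps in the two inverse systems defining $\check{\Omega}$ and $\check{H}$, so by universality $\check{\mu}$ is simply $\mu$ applied componentwise. Consequently the $U$-component of $\check{\mu}\bigl(\sys{[M,\partial M;f]}\bigr)$ is exactly $H_{2n}(\sys{f})(\sigma^V_{\mathbf f})$.

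Going through the upper path: $\mu[M,\partial M;f]=H_n(f)(\sigma_{\mathbf f})$, with $\sigma_{\mathbf f}$ the fundamental class of $(M,\partial M)$. The homological symmetric squaring is defined on chains via the simplicial cross product composed with $pr_\sharp$, both of which are natural; a direct check from \autoref{hom} yields $\sys{(H_n(f)(\sigma_{\mathbf f}))}=\check{H}_{2n}(\sys{f})(\sys{\sigma_{\mathbf f}})$ in \v{C}ech homology. Now \autoref{fundquadrat} applied to $(M,\partial M)$ asserts that $\sys{\sigma_{\mathbf f}}$ is the \v{C}ech fundamental class of $\sys{(M,\partial M)}$ in the precise sense that, for the chosen neighbourhood of the diagonal, its $V$-component equals the inclusion image of $\sigma^V_{\mathbf f}$. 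Pushing this forward by $H_{2n}(\sys{f})$ into the $U$-component therefore produces $H_{2n}(\sys{f})(\sigma^V_{\mathbf f})$, which matches the computation from the lower path.

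The main obstacle, as I see it, is the bookkeeping around the cofinal system of diagonal neighbourhoods: given $U$ one must argue that a single choice of $V$ serves simultaneously in the definition of $\sys{[M,\partial M;f]}$ and in the application of \autoref{fundquadrat}, and that any two compatible choices lead to equal projections at level $U$. \autoref{diag-choice} supplies the necessary independence, while the cofinality already exploited throughout \autoref{cechbord} reduces the comparison of inverse-limit elements to an equality at a suitably chosen level. Once this indexing is in place, the whole argument is essentially a direct combination of \autoref{fundquadrat} with the levelwise naturality of $\mu$ and of the chain-level symmetric squaring.
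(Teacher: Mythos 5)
Your proof is correct and takes essentially the same route as the paper's: both reduce to a levelwise check against each diagonal neighbourhood, both identify $\check{\mu}$ as $\mu$ applied componentwise, both invoke the naturality of the chain-level cross-product/projection to move $f$ through the homological squaring, and both close the loop with \autoref{fundquadrat}. The only difference is presentational — the paper spells out the chain-level naturality as an explicit sequence of equalities, whereas you cite it as a direct check and instead devote more space to the cofinality/\autoref{diag-choice} bookkeeping that the paper handles via footnotes.
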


\begin{proof}
First of all it is necessary to take a look at how the fundamental class homomorphism $\mu$ induces a (well-defined) map $\check{\mu}$ between \v{C}ech-bordism and \v{C}ech-homology. Both groups in the bottom row of the above diagram are defined via an inverse limit, namely
\begin{align*}\begin{split}
&\check{\Omega}_{2n}\sys{(X,A)}\\
&= \varprojlim\left\{\Omega_{2n}(\sys{X},U_\Delta)|U_\Delta \supset pr(X\times A\cup A\times X\cup \Delta_X) \text{ open neighbourhood}\right\}\\
&\text{ and }\\
&\check{H}_{2n}(\sys{(X,A)},\mathbb Z)\\
&=  \varprojlim\left\{H_{2n}(\sys{X},U_\Delta,\mathbb Z)|U_\Delta \supset pr(X\times A\cup A\times X\cup \Delta_X) \text{ open neighbourhood}\right\}
\end{split}
\end{align*} by definition.\\
Thus $\check{\mu}$ is induced by defining levelwise \[\mu_{U_\Delta}\colon \Omega_{2n}(\sys{X}, U_\Delta)\rightarrow H_{2n}(\sys{X},U_\Delta;\mathbb Z)\] as the fundamental class homomorphism between the groups that occur in the limits as factors. This is a sensible way of defining a map because of the fact that for two different neighbourhoods $U_\Delta$ chosen as above, the inclusions in the limit induce maps that send the corresponding fundamental classes to one another.\footnote{This is a property of the inverse limit, compare \cite{MR1335915}.}\\
This leads to pursuing a levelwise strategy for the proof of the commutativity of the diagram as well. So choose an open neighbourhood $U_\Delta$ of  $pr(X\times A\cup A\times X\cup \Delta_X)$  in $\sys{X}$ and look at the diagram\[
\begin{xy}\xymatrixcolsep{1.9091pc}
 \xymatrix{
\Omega_n(X,A) \ar[r]^{\mu} \ar[d]_{\left(\sys{\cdot}\right)_{U_\Delta}} & H_n(X,A,\mathbb Z)\ar[d]_{\left(\sys{\cdot}\right)_{U_\Delta}} \\
\Omega_{2n}(\sys{X}, U_\Delta) \ar[r]_{\mu_{U_\Delta}} & H_{2n}(\sys{X},U_\Delta,\mathbb Z),
}
\end{xy}\]which is one level of the above diagram. The symmetric squaring was defined levelwise in bordism and homology and the bottom vertical arrow here shall denote these levelwise definitions in level $U_\Delta$. Then take an element $\left[B^n,\partial B^n,f\right]\in \Omega(X,A)$. It has to be shown that
\[
\sys{(\mu\left[B^n,\partial B^n,f\right])}=\mu_{U_\Delta}(\sys{\left[B^n,\partial B^n,f\right]}).
\] This can be done by computing
\begin{align*}
\sys{(\mu\left[B^n,\partial B^n,f\right])}
&=\sys{({f_{\ast}(\sigma)})} \text{, where }\sigma \text{ denotes the fundamental class of }(B^n,\partial B^n)\\
&=\sys{({f_{\ast}}(\left[\sum_{i=1}^{k}g_i \sigma_i\right]))} \text{, where the sum is }\sigma \text{ as a singular chain}\\ 
&=\sys{\left[\sum_{i=1}^k g_i f_{\sharp}(\sigma_i)\right]}  \\
&=\left[\sum\limits_{\genfrac{}{}{0pt}{}{i<j}{1\leq i,j\leq k}}g_i g_j pr_{\sharp}(f_{\sharp}(\sigma_i)\times f_{\sharp}(\sigma_j))\right]\\
&=\left[\sum\limits_{\genfrac{}{}{0pt}{}{i<j}{1\leq i,j\leq k}}g_i g_j pr_{\sharp}(f\times f)_{\sharp}(\sigma_i\times \sigma_j)\right]\text{, by a property of the }\\
&  \text{ simplicial cross product }\\
&=\left[\sum\limits_{\genfrac{}{}{0pt}{}{i<j}{1\leq i,j\leq k}} g_i g_j (\sys{f})_\sharp(pr^{B^n})_{\sharp}(\sigma_i\times\sigma_j)\right]\text{, by the definition of $\sys{f}$}\\
&=(\sys{f})_{\ast}\left[\sum\limits_{\genfrac{}{}{0pt}{}{i<j}{1\leq i,j\leq k}}g_i g_j pr^{B^n}_\sharp(\sigma_i\times\sigma_j)\right]\\
&=(\sys{f})_{\ast}\sys{\left[\sum_{i=1}^k g_i\sigma_i\right]}\\
&=(\sys{f})_\ast(\sys{\sigma})\\
&=\mu_{U_\Delta}\left[\widetilde{B^n\times B^n}/\tau,\partial(\widetilde{B^n\times B^n}/\tau),\sys{f}\right]\text{, because }\sys{\sigma} \text{ is the }\\
&\text{ fundamental class of } (\widetilde{B^n\times B^n}/\tau,\partial(\widetilde{B^n\times B^n}/\tau)) \\
&\text{ by \autoref{fundquadrat}.}\\
&=\mu_{U_\Delta}(\sys{\left[B^n,\partial B^n,f\right]})\\
\end{align*}
Passing to the limit proves the proposition.
\end{proof}

In unoriented bordism combined with homology with $\mathbb Z_2$-coefficients the same result holds without the restriction on the dimension. We can define the fundamental class homomorphism in a corresponding way and also the above proof works analogously.
\begin{rem}
Let $n\in\mathbb N$ be a natural number and $(X,A)$ be a pair of topological spaces. Furthermore denote the fundamental class transformation by $\mu$. Then the diagram
\[
\begin{xy}\xymatrixcolsep{3.5pc}
\xymatrix{
\mathcal{N}_n(X,A) \ar[r]^{\mu} \ar[d]_{\sys{\cdot}} & H_n(X,A,\mathbb Z_2)\ar[d]_{\sys{\cdot}} \\
\check{\mathcal{N}}_{2n}(\sys{(X,A)}) \ar[r]_{\check{\mu}} & \check{H}_{2n}(\sys{(X,A)},\mathbb Z_2)
}
\end{xy}
\]
is commutative.
\end{rem}

\chapter{Computations}

As soon as the attempt of computing the symmetric squaring map in homology or bordism for specific topological spaces is made, difficulties arise due to the fact that the coordinate switching involution does not induce a free action on a squared space. There always is the diagonal as a fixed point set.\\
One tool for solving problems with fixed point sets is called the Borel construction. For a compact Lie group $G$ and a $G$-space\footnote{For topological groups $G$, the notion of a $G$-space $X$ is defined to be a topological space $X$ together with a $G$-action on $X$. The theory of topological G-spaces is explained in more detail in \cite{MR0413144}.} $X$, it uses the universal principal bundle $G\rightarrow EG\rightarrow BG$ to produce a space strongly related to $X$, which has a free action associated to it.\\
In the first section of this chapter, we shortly review some facts about the Borel construction. These are used in the second section together with the theory of $G$-spaces to prove a theorem containing an alternative description of the homology group of a symmetric squared space that is hopefully easier to compute.

\section{The Borel construction}
The so-called Borel construction is explained in more detail in \cite{MR1236839}. It was introduced by A. Borel in \cite{MR0116341} in order to study the cohomology of $G$-spaces.

\begin{defi}[Borel construction]\label{def:borel}
Let $G$ be a compact Lie group. Furthermore, let $X$ be a $G$-space and $EG$ the universal free $G$-space. Define the space $X_G$ to be the orbit space of the diagonal action on the product $EG\times X$, i.e. $G$ acts on $EG\times X$ via $g(x,y)=(gx,gy)$. The space $X_G$ then is the total space of the bundle $X\rightarrow X_G\rightarrow BG$ associated to the universal principal bundle $G\rightarrow EG\rightarrow BG$, where $BG:=EG/G$ is the classifying space of $G$.
\end{defi}
What is done here is that to a $G$-action on $X$ that might have fixed points, there is associated a free $G$-space which is topologically very similar to $X$ since the universal free $G$-space $EG$ is contractible disregarding the $G$-action.

\begin{example}\label{ex:sybo}
In the case of the symmetric squaring construction, we deal with $\mathbb Z_2$-spaces. The universal principal bundle here is\footnote{Compare Example 1B.3 in \cite{MR1867354}.} $\mathbb Z_2\rightarrow S^\infty\rightarrow \mathbb{RP}^\infty$. For any topological space $X$ the $\mathbb Z_2$-action induced by the involution $(x,y)\mapsto(y,x)$ on $X\times X$ has the diagonal as a fixed point set. Using the Borel construction, we associate to this the space $X\times X\times S^\infty$ with the free $\mathbb Z_2$-action induced by $(x,y,z)\mapsto(y,x,-z)$ and $(X\times X)_{\mathbb Z_2}$ is defined to be its orbit space.
\end{example}
In the next section we will prove a result relating the homology groups of $G$-spaces to the homology groups of the Borel construction.
\begin{rem}
 \autoref{def:borel} is commonly used to define an equivariant cohomology theory $H^\ast_G(-)$ via $H^\ast_G(X):=H^\ast(X_G)$.
\end{rem}

\section{Using G-spaces}\label{sec:G-spaces}
 
Our purpose is to find a relation between the homology groups of symmetric squared spaces and homology groups that are possibly easier to compute. One candidate of such is the Borel construction as we will see below. Before we can prove the main result of this section, we have to cite a famous (co)homological mapping theorem.\\
The Vietoris-Begle Mapping Theorem was originally proven in \cite{MR0035015,MR0082097}. In \autoref{borel-hom} below the following homology version of this theorem is needed, which can be found in \cite{MR753049}. As before $\check{H}_\ast$ denotes \v{C}ech homology, furthermore $\tilde{H}_\ast$ denotes reduced homology. 
\begin{thm}[Vietoris-Begle Mapping Theorem]\label{vie-beg}
 Let $f\colon X\rightarrow Y$ be a continuous surjective mapping of metrizable compacta, for which
\[
\tilde{H}_i(f^{-1}(y);G)=0 \text{ for all } y\in Y \text{ for } i\leq n. 
\]Then if G is a countable group, the induced homomorphism \[\check{H}_q(f)\colon \check{H}_q(X;G)\rightarrow \check{H}_q(Y;G)\] is an isomorphism for $0\leq q\leq n$ and an epimorphism for $q=n+1$.
\end{thm}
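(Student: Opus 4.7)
The plan is to reduce the statement to a comparison of nerves of open covers followed by a Leray-type spectral sequence argument. Since $X$ and $Y$ are compact metrizable, their \v{C}ech homology groups can be computed as inverse limits $\check{H}_q(X;G)=\varprojlim H_q(N(\mathcal{U});G)$ over finite open covers $\mathcal{U}$, where $N(\mathcal{U})$ denotes the nerve. First, I would show that the covers of $X$ of the form $f^{-1}(\mathcal{V})$ for finite open covers $\mathcal{V}$ of $Y$ are cofinal among all finite open covers of $X$: this follows from the uniform continuity of $f$ on the compact metric space $X$ together with a Lebesgue number argument using any metric on $Y$. Consequently, $f$ induces a morphism of the defining inverse systems and the map $\check{H}_q(f)$ can be read off as a map between limits of nerve homologies.

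For each $y\in Y$ the fiber $f^{-1}(y)$ is the intersection of the decreasing family of compacta $f^{-1}(\bar V)$, $V$ ranging over a countable neighbourhood basis of $y$. Steenrod's continuity of \v{C}ech homology on compact Hausdorff spaces yields
\[
\check{H}_i(f^{-1}(y);G)\;\simeq\;\varprojlim_{V\ni y}\check{H}_i(f^{-1}(V);G),
\]
and here the countability of $G$ (hence of the bonding groups) is exactly what is needed to invoke the Mittag--Leffler condition so that the $\varprojlim^1$ obstruction vanishes and the limit is exact. Combining this with the hypothesis $\tilde H_i(f^{-1}(y);G)=0$ for $i\leq n$, I can force the system $V\mapsto \check{H}_i(f^{-1}(V);G)$ to be essentially trivial in degrees $i\leq n$ on a cofinal family of neighbourhoods of each point.

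The core of the argument is then a Leray-type spectral sequence. To each finite open cover $\mathcal{V}=\{V_\alpha\}$ of $Y$ I associate the double complex whose $(p,q)$-part is $C_p(N(\mathcal{V}))\otimes \check C_q(f^{-1}(V_\sigma);G)$, where $V_\sigma$ denotes the intersection indexed by a simplex $\sigma$ of $N(\mathcal{V})$. Filtering one way and passing to the limit over refinements yields the \v{C}ech homology of $X$, using the cofinality established in the first step. Filtering the other way produces a spectral sequence whose $E_2$-page is $\check{H}_p(Y;\mathcal{H}_q)$, where $\mathcal{H}_q$ is the presheaf $V\mapsto \check{H}_q(f^{-1}(V);G)$. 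The fibre hypothesis, together with the neighbourhood statement of the previous paragraph, makes $\mathcal{H}_q$ vanish for $0<q\leq n$ and $\mathcal{H}_0$ constant with stalk $G$. The spectral sequence then collapses enough that the edge homomorphism $\check{H}_q(X;G)\to\check{H}_q(Y;G)$ is an isomorphism for $q\leq n$ and surjective for $q=n+1$, which is precisely the claim.

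The hard part will be the bridge between the pointwise hypothesis on fibres and the vanishing of the presheaf $\mathcal{H}_q$ on a cofinal system of open neighbourhoods: this is where compactness, metrizability and the countability of $G$ must combine, and where a naive interchange of limit and homology would fail without the Mittag--Leffler control. Carrying this out rigorously inside a concrete nerve model, rather than appealing to a sheaf-cohomological machinery that is not readily available for \v{C}ech \emph{homology}, is the most delicate technical step; the remainder of the proof is then formal bookkeeping on the two filtrations of the double complex.
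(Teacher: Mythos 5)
The paper does not prove this theorem at all: it is quoted from the literature, with the statement attributed to \cite{MR753049} and the original result to Vietoris and Begle. So there is no internal proof to compare against; I can only assess your sketch on its own terms, and there is a genuine gap in it.

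Your very first reduction is false. You claim that the covers of $X$ of the form $f^{-1}(\mathcal{V})$, for $\mathcal{V}$ a finite open cover of $Y$, are cofinal among all finite open covers of $X$, and you attribute this to uniform continuity of $f$ plus a Lebesgue number argument. Uniform continuity controls the size of $f(U)$ in terms of the size of $U$; it says nothing about the size of $f^{-1}(V)$, which contains an entire fibre $f^{-1}(y)$ whenever $y\in V$. If the fibres have positive diameter (and nothing in the hypotheses forces them to be points -- they are merely required to be acyclic through degree $n$), then no pullback cover $f^{-1}(\mathcal{V})$ can refine a sufficiently fine cover of $X$. Concretely, if $Y$ is a single point then the only pullback cover is $\{X\}$, which certainly does not refine anything. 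So pullback covers are \emph{not} cofinal, and the identification $\check{H}_q(X;G)\simeq\varprojlim_{\mathcal V}H_q(N(f^{-1}(\mathcal V));G)$ that your double complex relies on is unavailable at the outset. Worse, this is exactly where the content of the theorem lives: Vietoris--Begle is precisely the statement that, under the acyclic-fibre hypothesis, the map induced on $\varprojlim$ by $\mathcal V\mapsto f^{-1}(\mathcal V)$ is nevertheless an isomorphism in low degrees, even though the covers are far from cofinal. Assuming cofinality renders the argument circular.

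The rest of the outline -- build a Leray-type double complex over a cover of $Y$ with entries the \v{C}ech chains of the preimages, filter both ways, use Steenrod continuity together with the countability of $G$ to tame $\varprojlim^1$ and apply Mittag--Leffler -- does resemble genuine proofs in the literature (Begle's argument and Sklyarenko's homological treatment proceed in this spirit, using the Vietoris construction and carrier arguments rather than a naive pullback of covers). But the spectral sequence you write down converges to the limit over pullback covers, and without the false cofinality step you have not related that limit to $\check{H}_*(X;G)$. Repairing the proof requires replacing the cofinality claim with the actual work: a carrier or acyclic-models style comparison between the nerve system of $X$ and the nerve system of $Y$, where the fibre hypothesis is used to construct chain homotopies one degree at a time up to degree $n$, and where the Mittag--Leffler/countability hypothesis is used to make the inverse limit exact at the final step. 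You have correctly identified that this last bridge is ``the hard part,'' but as written it is not a gap to be filled later -- it is the entire theorem, and the first lemma you lean on is false.
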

One technical difficulty we have to overcome in the following is that we want to use the preceding theorem for the non-compact space $EG$. As in \cite{MR0413144}, the way out here is looking at approximations of the Borel construction rather than at the construction itself.
\begin{rem}[Approximation of the Borel construction]\label{borel-approx}
As before, let $G$ be a compact Lie group. Furthermore, take a universal principal $G$-bundle $EG\rightarrow BG$, whose classifying space $BG$ is a CW-complex with finite $N$-skeleton $BG^N$ for all $N$. Now, let $EG^N$ be the inverse image of $BG^N$ under the above projection map. This construction is done in a way such that $EG^N$ is compact and\[
X\times_G EG^N \hookrightarrow X_G 
\]is $N$-connected for all $N$.\\
This approximation makes it possible to reduce later statements to the $N$th approximation of the Borel construction..
\end{rem}
In cohomology the proposition given next holds in greater generality and is stated and proven as Proposition VII.1.1 in \cite{MR0413144}. The proof of the homology case given here is based on that proof.
\begin{note}Note that as it is common in the theory of $G$-spaces, given a $G$-space $X$, the fixed point set of the $G$-action on $X$ is denoted by $X^G$  and the orbit space is denote by $X/G$ in the sequel. Recall furthermore that $X_G$ was defined as the Borel construction in the preceding section. 
\end{note}
With this notation in mind, we can finally prove the announced homology theorem for symmetric squared spaces.
\begin{prop}\label{borel-hom}
 Let $G=\mathbb Z_p$ for $p$ prime and let $X$ be a compact and metrizable $G$-space. Then the projection
\[
\phi\colon X_G\rightarrow X/G
\] induces an isomorphism 
\[
\phi_\ast\colon \check{H}_i(X_{G},X^{G}\times BG)\rightarrow \check{H}_i(X/G,X^{G})
\]for coefficients in a countable group.
\end{prop}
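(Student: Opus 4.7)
The plan is to reduce the statement, via the $N$-skeleton approximation $EG^N$ described in \autoref{borel-approx}, to an application of the Vietoris-Begle Mapping Theorem to a finite-dimensional auxiliary map whose fibres are acyclic. The reason for passing to approximations is that $EG$ itself is not compact, whereas \autoref{vie-beg} requires compact metrizable source and target.

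For each $N$, consider the approximate projection $\phi^N\colon X \times_G EG^N \to X/G$. Because $G=\mathbb{Z}_p$ has no non-trivial proper subgroups, every point of $X$ has isotropy either trivial or all of $G$. Consequently the fibre of $\phi^N$ over an orbit $[x]$ with $x\notin X^G$ is homeomorphic to $EG^N$ (the $G$-action on $G\cdot x$ being free), which is $(N-1)$-connected, while the fibre over a fixed point $[x]\in X^G$ is $EG^N/G = BG^N$; restricted to $X^G$, the map $\phi^N$ is simply the projection $X^G\times BG^N\to X^G$. The non-acyclic fibres over $X^G$ are then dealt with by collapsing: the induced map of compact metrizable quotients
\[
\bar\phi^N\colon (X\times_G EG^N)\!/(X^G\times BG^N)\longrightarrow (X/G)\!/X^G
\]
has a single-point fibre over the collapsed basepoint and a fibre homeomorphic to $EG^N$ over every other point, so all its fibres are reduced-acyclic in \v{C}ech homology in degrees $< N$. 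Applying \autoref{vie-beg} with a countable coefficient group yields that $(\bar\phi^N)_\ast$ is an isomorphism on $\check H_i$ for $i\le N-1$.

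Standard excision for \v{C}ech homology on closed pairs of compact metric spaces identifies the reduced \v{C}ech homology of these quotients with the relative \v{C}ech homology of the pairs $(X\times_G EG^N, X^G\times BG^N)$ and $(X/G, X^G)$, so $\phi^N_\ast$ is an isomorphism on relative \v{C}ech homology in the same range. Finally, \autoref{borel-approx} gives that the inclusions $X\times_G EG^N\hookrightarrow X_G$ and $X^G\times BG^N\hookrightarrow X^G\times BG$ are $N$-connected, hence induce isomorphisms on $\check H_i$ for $i<N$; the five-lemma applied to the long exact sequences of the respective pairs therefore identifies $\check H_i(X\times_G EG^N, X^G\times BG^N)$ with $\check H_i(X_G, X^G\times BG)$ for $i<N$, and choosing $N>i$ gives the desired isomorphism in each fixed degree $i$.

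The main obstacle I anticipate is the excision step: identifying reduced \v{C}ech homology of the collapsed quotient spaces with the relative groups of the original pairs requires that the subspaces be closed and sufficiently well-embedded in the ambient compacta, which will need a short verification using that $X^G$ is closed in $X$ and that $BG^N$ is a finite CW complex. A secondary technical point is checking that \v{C}ech and singular reduced homology of $EG^N$ agree in low degrees so that Vietoris-Begle genuinely sees the fibres as acyclic; this is immediate since $EG^N$ is a finite CW complex.
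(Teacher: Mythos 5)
Your proof uses the same three core ingredients as the paper's: the $N$-skeleton approximation of \autoref{borel-approx} to replace $EG$ by a compact space, the computation of fibres via stabilizers $G_x$ (using that $\mathbb Z_p$ has only trivial or full isotropy), and the Vietoris-Begle Mapping Theorem \autoref{vie-beg}. So in spirit this is the same argument.

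Where you differ is in the execution, and your version is actually tighter in two places where the paper is terse. First, the paper proposes to apply Vietoris-Begle to the restriction $\tilde\phi\colon X_G^N\setminus(X^G\times BG^N)\to(X\setminus X^G)/G$; but since $X^G$ is closed, these are \emph{open} subspaces and hence not compact, whereas \autoref{vie-beg} as stated requires compact metrizable source and target. Your collapsing $(X\times_G EG^N)/(X^G\times BG^N)\to (X/G)/X^G$ sidesteps this, since quotients of compact metrizable spaces by closed subspaces remain compact metrizable, and it makes the basepoint fibre literally a point. Second, the paper stops at "applying Vietoris-Begle proves the proposition," leaving implicit both the step from absolute to relative homology and the passage from the $N$th approximation back to $X_G$; you supply both, via strong excision for compact metric pairs and a five-lemma argument using the $N$-connectedness of $X\times_G EG^N\hookrightarrow X_G$. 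Net effect: same route, but you have filled in the compactness and limit-passage details that the paper leaves to the reader.

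One small remark: when invoking "standard excision" to identify $\check H_i(X\times_G EG^N,X^G\times BG^N)$ with the reduced \v{C}ech homology of the quotient, you are using the strong excision property of \v{C}ech(--Steenrod) homology on compact metric pairs. This is correct in the regime the paper works in (compact metric pairs, countable coefficients), but it is worth flagging explicitly, since it is precisely the property that ordinary singular homology lacks and one of the reasons \v{C}ech homology is used throughout the thesis.
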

\begin{proof}
Using \autoref{borel-approx}, we only prove the statement for the $N$th approximation for all $N$.\\
 To be able to use the Vietoris-Begle Mapping Theorem as stated above, examine $\phi^{-1}(\left[x\right])$ for $\left[x\right]\in X/G$. From Proposition I.4.1 in \cite{MR0413144} it follows that
\[
\phi^{-1}(\left[x\right])\approx G(x)\times_{G} EG^N \approx G/G_x\times_{G}EG^N.
\]Together with the computation
\[
(G/H)\times_{G} EG^N\approx (G\times_G EG^N)/H\approx EG^N/H\approx BH^N \text{ for subgroups } H\subset G
\] this gives $\phi^{-1}(\left[x\right])\approx BG_x^N$. So the requirements for the use of the Vietoris-Begle Mapping Theorem are satisfied for the restriction map $\tilde{\phi}\colon X_G^N\setminus (X^G\times BG^N)\rightarrow (X\setminus X^G)/G$, namely
\[
\tilde{H}_i\left(\phi^{-1}\left(\left[x\right]\right)\right)=0 \text{ for all } x\notin X^G \text{ and for all } i\in \mathbb Z
\] with coefficients in countable groups. Applying the Vietoris-Begle mapping \autoref{vie-beg} proves the proposition.
\end{proof}
We can use \autoref{ex:sybo} to formulate the preceding theorem in the special case of symmetric squaring.
\begin{example}[Symmetric squaring]
Let X be a metrizable compact space. The projection map \[
\phi\colon(X\times X)_{\mathbb Z_2}\cong \left(\left((X\times X\times S^\infty)\right)/{\mathbb{Z}_2}\right)\rightarrow \sys{X}
\]
induces an isomorphism\[
\phi_\ast\colon\check{H}_i\left(\left(X\times X\right)_{\mathbb Z_2}, \Delta\times \mathbb{RP}^\infty\right)\rightarrow \check{H}_i\left(\sys{X},\Delta\right)
\] in \v{C}ech homology, where $\Delta$ denotes the diagonal in $X\times X$.
\end{example}
This gives an alternative way of computing homology groups of symmetric squared spaces $\sys{X}$ involving a free action instead of one with a nonempty fixed point set.
\newpage~
\thispagestyle{empty}

\chapter{Perspectives}

The main achievement of this thesis is the transport of the symmetric squaring construction to bordism and relating the newly achieved bordism symmetric squaring to the homological symmetric squaring which was known before. The idea of transporting symmetric squaring from homology to bordism did not arrive completely out of the blue. There is a strong relationship between homology and bordism, demonstrated for example by the existence of a canonical map from bordism to homology\footnote{The fundamental class map explained in \autoref{fundhom}.} or by the fact that bordism groups can be computed via homology groups in many cases\footnote{Compare Theorems 8.3 and 12.9 in \cite{MR548463}.}.\\
Therefore, promising future research goals in connection with symmetric squaring can be set in areas related to bordism or to other generalised homology theories which are defined in a similar way to singular bordism.\\
One example which fits into the latter category is the geometric definition of $K$-homology. It was introduced by Baum and Douglas in \cite{MR679698} and proven to be equivalent to the analytic definition of $K$-homology in \cite{MR2330153}. It is similar to singular bordism in the sense that it deals with $K$-cycles which are defined to be triples $(M,E,\phi)$, where $\phi\colon M\rightarrow X$ is a continuous map from a smooth compact manifold $M$ to a space $X$ just as in the definition of singular manifolds. A $K$-cycle, however, has two additional structures to it. First of all, the smooth compact manifold $M$ is assumed to be equipped with a $Spin^c$-structure and secondly a smooth Hermitian vector bundle $E$ on $M$ is taken into account.\\
Just as with singular bordism, there is introduced a bordism relation on $K$-cycles and the set of equivalence classes with respect to this relation form homology groups $K(X)$ for the space $X$. As there is more structure to the objects that this theory is dealing with, there is also more structure to the bordism relation introduced in this context. Namely two $K$-cycles are considered to be isomorphic if there exist isomorphisms compatible with the different structures such as vector bundles and spinor bundles and a bordism of $K$-cycles has to feature the analogous additional $Spin^c$- and vector bundle structures as $K$-cycles do. What remains the same in comparison with singular bordism is that two $K$-cycles $(M_0,E_0,\phi_0)$ and $(M_1,E_1,\phi_1)$ are bordant if there exists a bordism between them which has an underlying manifold containing the disjoint union of $M_0$ and $-M_1$ as a regular submanifold in its boundary and which respects all structures as noted above.\\
Although in the resulting groups disjoint union is the group operation as it is in singular bordism as well, there is still one more difference between $K$-homology groups and singular bordism groups. Since the objects dealt with are $Spin^c$-manifolds, they are equipped with $Spin^c$-vector bundles and this gives rise to the so-called bundle modification which gives rise to another extra relation in $K$-theory as compared to singular bordism.\footnote{The last three paragraphs are based on the exposition of geometric $K$-homology in \cite{MR2330153}.} \\
Aiming at transporting the symmetric squaring construction to geometric $K$-homology, it should be possible to use the better part of the achievements in this thesis for all situations where $K$-homology is similar to singular bordism. It remains to show, however, that the symmetric squaring is as well compatible with the additional structures of $K$-cycles. Namely, it would be desirable to know how to extend symmetric squaring of manifolds to vector bundles or $Spin^c$-structures on the manifolds in a way that the results are bordant $K$-cycles as soon as the manifolds are underlying manifolds of bordant $K$-cycles in the first place. In this context it would also be necessary to shed a light on the behaviour of symmetric squaring with respect to bundle modification and direct sums of vector bundles.

Another attempt to generalise symmetric squaring further could lie in detecting superior structures in the proofs of this thesis that either indicate possibilities of further generalisations or provide obstructions to such. In connection with smooth manifolds, common structures to think of are structure groups of frame bundles of manifolds\footnote{Compare Section 4.4 in \cite{MR1841974} for a definition of frame bundles.} or accordingly $G$-structures on smooth manifolds. For some Lie subgroup $G$ of $GL(n,\mathbb R)$, a $G$-structure on an $n$-manifold is defined to be a reduction of the structure group of the frame bundle of the manifold to $G$.\footnote{We refer to the first chapter of \cite{MR1336823} for details about $G$-structures.} \\
So far in this thesis, we dealt with unoriented and oriented compact smooth manifolds. An oriented compact smooth $n$-manifold, for example, has an $SO(n)$-structure.\footnote{Using a Riemannian structure on compact smooth $n$-manifolds gives an $O(n)$-structure which can be reduced further to an $SO(n)$-structure via orientability. Compare the examples of $G$-structures in I.2 of \cite{MR1336823}.} First of all it is natural to try to think about how this given $SO(n)$-structure on an oriented $n$-manifold $M$ induces an analogous structure on the symmetric square $\sys{M}$ and even more to find out whether this induced structure can be reduced further. In \autoref{halfor}, we could prove that a lifting of symmetric squaring to oriented bordism is well-defined for even dimensional manifolds only. It is of interest to know whether any specific properties of the $SO(n)$-structure or its induced structure on the symmetric squared manifold directly imply either that symmetric squaring can be transported to oriented bordism in even dimensions or that a generalisation to oriented bordism is not possible in odd dimensions. As soon as such obstructions or implications are identified, it is worth attempting to prove further generalisations of the symmetric squaring construction to other bordism theories in which manifolds with $G$-structures are involved having the properties corresponding to those of $SO(n)$-structures in even dimensions.\\   
To put it all in a nutshell, one perspective of generalising symmetric squaring further lies in answering the following question: What (minimal) properties have to be satisfied by a $G$-structure on a manifold $M$ or an induced structure on $\sys{M}$ respectively in order to allow symmetric squaring to map bordant manifolds to bordant manifolds in a way compatible with these group structures introduced on each manifold?




\bibliographystyle{alphaurl}
\bibliography{bibliography}










\end{document}